\newcommand{\CC}{\mathbb{C}}
\newcommand{\intpart}[1]{\left\lfloor#1\right\rfloor}
\newcommand{\bp}{\begin{pmatrix}}
\newcommand{\ep}{\end{pmatrix}}
\DeclareMathOperator{\ord}{ord}
\DeclareMathOperator{\sign}{sign}
\numberwithin{equation}{section}
\numberwithin{equation}{subsection}
\theoremstyle{plain}
\newtheorem{lemma}[equation]{Lemma}
\newtheorem{proposition}[equation]{Proposition}
\newtheorem{corollary}[equation]{Corollary}
\newtheorem{convention}[equation]{Convention}
\theoremstyle{definition}
\newtheorem{example}[equation]{Example}
\newtheorem{remark}[equation]{Remark}
\newtheorem{definition}[equation]{Definition}
\numberwithin{equation}{section}
\numberwithin{equation}{subsection}
\newtheorem*{acknowledgements}{Acknowledgements}
\newcommand{\mv}{\mathcal{V}}
\newcommand{\mvt}{\widetilde{\mathcal{V}}}
\newcommand{\mw}{\mathcal{W}}
\newcommand{\sm}[1]{{#1}^{*^{\scriptstyle -1}}}
\newcommand{\nd}{\textrm{ndeg}}
\title[Link invariants]{Hodge--type structures as link invariants}
\author{Maciej Borodzik}
\address{Institute of Mathematics, University of Warsaw, ul. Banacha 2,
02-097 Warsaw, Poland}
\email{mcboro@mimuw.edu.pl}
\thanks{The first author is supported by Polish MNiSzW Grant No N N201 397937 and also by a Foundation for Polish Science FNP. The second author is partially supported by OTKA Grant K67928. }
\author{Andr\'as N\'emethi}
\address{A. R\'enyi Institute of Mathematics, 1053 Budapest,  Re\'altanoda u. 13-15,  Hungary.}
\email{nemethi@renyi.hu}
\date{\today}
\subjclass[2010]{primary: 57M25, secondary: 32S25, 14D07, 14H20}
\keywords{Seifert matrix, Hodge numbers, Alexander polynomial, Tristram--Levine signature, variation structure,
semicontinuity of the spectrum}
\begin{document}
\begin{abstract}
Based  on some analogies with the Hodge theory of isolated hypersurface singularities,
we define Hodge--type numerical invariants of any, not necessarily algebraic, link in $S^3$.
We call them \emph{H--numbers}.
They contain the same amount of information as the
(non degenerate part of the) real Seifert matrix.  We study their basic properties,  and
we express the Tristram--Levine signatures and the higher order
Alexander polynomial in terms of them.
Motivated by singularity theory, we also introduce the \emph{spectrum} of the link (determined from these
$H$--numbers),  and we establish some semicontinuity properties for it.
 These properties can be related with skein--type  relations, although
 they are not so precise as the classical skein relations.
\end{abstract}
\maketitle

\section{Introduction}
\subsection{}
Although a Seifert matrix of a link is not a link invariant itself, it allows to define many link
invariants, which are on the one hand very deep, and easy to compute on the other. These invariants include
the Alexander polynomial, the signature and the Tristram--Levine signatures. It might be quite surprising
that the signature and the Alexander polynomial, although both come from a Seifert matrix, have completely
different properties. For example, the signature detects mirrors, and estimates the four-genus, while the Alexander
polynomial estimates the three-genus and does not detect mirrors.

Apparently straying from the knot theory, let us consider a 
hypersurface singularity in $(\mathbb{C}^{n+1},0)$. We can then associate
many important objects with it, as the intersection form on
the middle homology of the Milnor fiber, the monodromy matrix or the variation operator.
These three objects, together
with the space they act on, constitute a so--called {\it variation structure}.
If the singularity is isolated, then its variation structure is determined
by the variation operator, which is equivalent with the non--degenerate
Seifert bilinear form
associated with the germ of the singularity. Up to \emph{real} equivalence,
each such  variation structure is built of some explicitly written indecomposable pieces. The number of times
each such piece occurs in a concrete variation structure, is encoded in the so--called {\it
mod 2 equivariant (primitive)
Hodge numbers}  associated with  the singular germ. The name is motivated by the fact that they
are, indeed,  mod 2 reductions of the equivariant Hodge numbers associated with the
mixed Hodge structure of the vanishing cohomology of the singularity, defined as in \cite{Stee}.
In this way one also sees that the information codified in these numbers is
equivalent with the real Seifert form \cite{Nem-real}.

From the mod 2 equivariant Hodge
 numbers many other invariants can be reread, like the characteristic polynomial of monodromy,
or different signature--type invariants.  In fact, if $n=1$,
then from them one  can even recover completely
all the equivariant Hodge numbers of the mixed Hodge structure of the vanishing cohomology.

The algebraic links (case $n=1$)
form a bridge between the singularity theory and the knot theory (but the correspondences
can be continued in higher dimensions, too).  The Alexander polynomial
of an algebraic link is exactly the characteristic polynomial of the monodromy of the corresponding Milnor fibration.
The Milnor fiber constitutes
 a natural Seifert surface of an algebraic link. The corresponding Seifert matrix is the transposed inverse
of the variation operator. In other words, the variation structure of (plane curve) singularities is deeply related to
link invariants of  algebraic links.

Motivated strongly by the case of algebraic links, we associate  a variation structure with  \emph{any}
link in $S^3$, and we define the analogs of
\emph{mod 2 equivariant (primitive)  Hodge numbers}. The variation structure
is built of the real Seifert matrix of the link and  determines the Seifert matrix up to real S-equivalence.
In fact, the newly defined  numbers codify and determine this structure.
Although the present work does not contain any Hodge theoretical discussion, motivated
by the above correspondence we still call the introduced numbers \emph{H--numbers}.
Actually, the nilpotent part of the suitably defined 
monodromy operator defines a weight filtration, and also one can define
a mod 2--Hodge filtration similarly as in \cite{Nem-real}, hence a `mod 2 Hodge structure' exists 
(it would be interesting to extend it to a genuine Hodge structure).

Both the (higher) Alexander polynomial and the Tristram--Levine signatures can  be easily  expressed in terms  of the
H--numbers of the link. Their symmetries  and their behaviour  under taking
mirrors,  allow us to explain e.g.
why  the Alexander polynomial does not distinguish mirrors, while the signatures do.

In the Hodge theory of hypersurface singularities, the (numerical part) of the mixed Hodge structure
 was codified by
Steenbrink and Varchenko in the so--called spectrum. This codification was motivated by the
extremely powerful and mysterious  semicontinuity behavior of it under the deformation.

In our present context we also introduce the {\it mod 2 spectrum} of a link in $S^3$, and we relate it with
the classical link invariants as the higher Alexander polynomials and Tristram--Levine signatures.
The relation between the spectrum and Tristram--Levine signatures (see Proposition~\ref{p:signatures}) is one
of the key ingredients of showing semicontinuity results for spectra by topological methods as in \cite{BN}.
It also emphasizes the unifying power of the newly introduced invariants, which gather together more
conceptually all the classical properties and invariants associated with real Seifert matrices.
On the other hand,  since
we know only the classification of the variation structures over reals, and  not over integers,
in this discussion we loose some information regarding the integer  Seifert matrices, like the determinant.

After the foundations, we try to alloy the two main strategies used in the two theories:
the technique of skein relations of classical link theory with the semicontinuity of the spectrum
(known in singularity theory). Although at the origin and substance of both sits surgery,
for the second case one needs a special surgery with intrinsic  monotonicity structure
 (this, in the singularity theory, is  guaranteed by the
presence of the deformation). In our results we will assume the monotonicity of the degree of the
Alexander polynomial.

The basic motivation for studying semicontinuity of our structures is the following. First, we believe that the newly
introduced H--numbers 
do not admit so precise skein relations similarly as some of the classical link invariants; or, their
form should be packed in a more intelligent way. We believe that this `packing' goes through
the spectrum, and  the corresponding semicontinuity relations will guide the corresponding surgery formulas.
Similarly as in the case of analytic singularities, where the semicontinuity had remarkable applications
(see e.g. \cite{Var}), we expect in the future similar consequences for the newly introduced spectrum too.
For more comments  see  (\ref{ss:semies}).

\subsection{}
The structure of the paper is the following. We begin with a definition and examples of variation structures in
Section~\ref{s:variation}. Then we recall the classification theorem of \cite{Nem-real} regarding real, simple (see Definition~\ref{def:simplehodge})
variation structures,  and define the  H--numbers
for links. In Section~\ref{s:linkinv} we relate the classical link invariants (higher Alexander polynomials,
rational Nakanishi index, Tristram--Levine signatures) to the H--numbers. In the next section we  show
some examples. In the last section we gather  results about the skein relation,
the proofs mostly go through skein relation for the Alexander polynomial or the signatures.
Also, we establish some semicontinuity results for the spectrum. By them we wish to draw the attention of the
readers to this new phenomenon with the hope that this will bring some deep and powerful
instrument in the near future.

\begin{acknowledgements}
The authors wish to express their thanks to R\'enyi Institute for hospitality, to L.~Kaufmann, A.~Stoimenow, P.~Traczyk,
H.~Trotter and H.~Zoladek
for many fruitful discussions on the subject,  and to S.~Friedl for  pointing out the relation
of the newly discussed  invariants with the Nakanishi index.
\end{acknowledgements}

\section{Variations structures. Definitions and examples}\label{s:variation}
\subsection{Definitions}
Here we recall some definitions from \cite[Section 2]{Nem-Var}. We begin with fixing
some standard notation.

For a finite dimensional complex vector space $U$, we denote its dual by $U^*$. The natural
isomorphism $\theta\colon U\to U^{**}$ is given by $\theta(u)(\phi)=\phi(u)$. The complex
conjugation is denoted by a bar $\bar{\cdot}$;  for any  $\phi:U\to V$ its dual map is denoted
by  $\phi^*:V^*\to U^*$. Let us also recall that,
if $\phi$ is represented by a matrix $S$ in some basis, then $\phi^*$ is represented by a transpose
$S^T$ in the dual basis. For a matrix $S$, $S_{kl}$ or $S_{k,l}$ denotes the coefficient of $S$ in $k$-th row and $l$-th column.

It is convenient to regard hermitian forms in the following way.
\begin{definition}
A $\mathbb{C}$--linear endomorphism $b\colon U\to U^*$ with $\overline{b^*\circ\theta}=\varepsilon b$
(where $\varepsilon=\pm 1$) is called \emph{$\varepsilon$--hermitian form} on $U$.
\end{definition}
Remark, that we do not assume here that $b$ is non--degenerate.
The automorphisms of $b$ consists of isomorphisms $h\colon U\to U$ preserving $b$, i.e. with $\bar{h}^*\circ b\circ h=b$.
\begin{definition}\label{d:HVS}
An \emph{$\varepsilon$--hermitian variation structure} (abbreviated by HVS) over $\mathbb{C}$ is
a quadruple $(U;b,h,V)$, where
\begin{itemize}
\item[(1)] $U$ is finite dimensional vector space over $\mathbb{C}$;
\item[(2)] $b\colon U\to U^*$ is an $\varepsilon$--hermitian form on $U$;
\item[(3)] $h\colon U\to U$ is a $b$--orthogonal automorphism of $U$;
\item[(4)] $V\colon U^*\to U$ is a $\mathbb{C}$--linear endomorphism such that
\begin{align*}
\overline{\theta^{-1}\circ V^*}&=-\varepsilon V\circ\overline{h}^*\\
V\circ b&=h-I.
\end{align*}
\end{itemize}
Here and afterwards $I$ denotes the identity map. The name of the structure is inherited from
the operator $V$, which usually is a `variation map', cf. (\ref{Milnor}).
$V$, respectively $h$ will be called {\it variation}, respectively {\it monodromy} operator.
\end{definition}
Observe that from (4) it follows immediately that
\begin{equation}\label{eq:immediateproperties}
b\circ V={\overline{h}^*}^{-1}-I\text{, and } h\circ V\circ\bar{h}^*=V.
\end{equation}
\begin{definition}\label{def:simplehodge}
The HVS $(U;b,h,V)$ will be called \emph{non--degenerate} (respectively \emph{simple})
if $b$ (respectively $V$) is an isomorphism.
\end{definition}
We will need following lemmas from \cite{Nem-real}:
\begin{lemma}
For a triple $(U;b,h)$ satisfying points (1)--(3)  from Definition~\ref{d:HVS}, if $b$ is non--degenerate,
then there exists a unique $V$, namely $V=(h-I)b^{-1}$,  such that $(U;b,h,V)$ constitutes a HVS.
\end{lemma}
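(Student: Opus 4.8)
The plan is to verify directly that the candidate $V=(h-I)b^{-1}$ satisfies both conditions of point (4) in Definition~\ref{d:HVS}, and then to establish uniqueness. I would first observe that $V$ is well defined since $b$ is assumed non--degenerate, hence $b^{-1}\colon U^*\to U$ exists, so $V\colon U^*\to U$. The second identity in (4), namely $V\circ b=h-I$, is immediate: $V\circ b=(h-I)b^{-1}\circ b=h-I$. So the content lies in checking the first identity, $\overline{\theta^{-1}\circ V^*}=-\varepsilon V\circ\overline{h}^*$, together with the fact that $h$ is a $b$--orthogonal automorphism is what forces this to hold.

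For the first identity I would compute $V^*$ from $V=(h-I)b^{-1}$. Taking duals (recalling $(\phi\circ\psi)^*=\psi^*\circ\phi^*$), one gets $V^*=(b^{-1})^*\circ(h-I)^*=(b^*)^{-1}\circ(h^*-I)$. Then $\theta^{-1}\circ V^*=\theta^{-1}\circ(b^*)^{-1}\circ(h^*-I)$, and applying the conjugation and using $\overline{b^*\circ\theta}=\varepsilon b$ (so that $\overline{\theta^{-1}\circ(b^*)^{-1}}=\overline{(b^*\circ\theta)^{-1}}=(\varepsilon b)^{-1}=\varepsilon b^{-1}$, since $\varepsilon=\pm1$), I would reduce the left side to $\varepsilon\, b^{-1}\circ(\overline{h}^*-I)$. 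On the other side, $-\varepsilon V\circ\overline{h}^*=-\varepsilon(h-I)b^{-1}\overline{h}^*$. So the identity to check becomes $b^{-1}(\overline h^*-I)=-(h-I)b^{-1}\overline h^*$, equivalently, multiplying by $b$ on the left and rearranging, $\overline h^*-I=-b(h-I)b^{-1}\overline h^*=-(bhb^{-1}-I)\overline h^*$. Here the $b$--orthogonality of $h$, i.e. $\overline h^*\circ b\circ h=b$, gives $b h b^{-1}=(\overline h^*)^{-1}$; substituting, the right side becomes $-((\overline h^*)^{-1}-I)\overline h^*=-(I-\overline h^*)=\overline h^*-I$, which matches. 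This confirms the first identity; essentially the same computation also yields the relations in \eqref{eq:immediateproperties}.

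For uniqueness, suppose $V'$ is any operator making $(U;b,h,V')$ a HVS. Then $V'\circ b=h-I=V\circ b$, and since $b$ is an isomorphism we may cancel it on the right to conclude $V'=V$.

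The routine but slightly delicate point — the one place to be careful — is bookkeeping the interaction between the conjugation bar, the dualization $*$, and the canonical isomorphism $\theta\colon U\to U^{**}$, in particular the identity $\overline{\theta^{-1}\circ(b^*\circ\theta)}=\varepsilon\,b$ and its consequence for $(b^*)^{-1}$; once the convention $\phi^*\colon V^*\to U^*$ and the matrix description $\phi\mapsto S^T$ are kept straight, the computation is a short chain of substitutions, and the only genuine input beyond formal manipulation is the $b$--orthogonality relation $\overline h^*\circ b\circ h=b$.
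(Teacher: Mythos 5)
Your proof is correct: the direct verification that $V=(h-I)b^{-1}$ satisfies both conditions of Definition~\ref{d:HVS}(4) — using the hermitian identity $\overline{b^*\circ\theta}=\varepsilon b$ and the $b$--orthogonality $\overline{h}^*\circ b\circ h=b$ — together with the immediate uniqueness from cancelling the isomorphism $b$ in $V\circ b=h-I$, is exactly the standard argument. The paper itself gives no proof (it quotes the lemma from \cite{Nem-real}), and your computation is the natural one that reference carries out, so there is nothing further to compare.
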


The non--degenerate triplets $(U;b,h)$ are classified by Milnor \cite{Milnor-forms}, see also \cite{Neu0}.
\begin{lemma}\label{l:VtoHVS}
For a pair $(U;V)$ with $V$ an isomorphism, there exist unique $b$ and $h$ such that $(U;b,h,V)$ is a HVS.
Indeed, $h=-\varepsilon V\sm{\bar{V}}$ and $b=-V^{-1}-\varepsilon\sm{\bar{V}}$ satisfy the axioms.
\end{lemma}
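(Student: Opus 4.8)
The plan is to turn every condition into a matrix identity. Fix a basis of $U$ together with the dual basis of $U^*$; as recalled above, the dual of a map is then represented by the transpose of its matrix, and the canonical $\theta\colon U\to U^{**}$ becomes the identity, so complex conjugation and transposition act on these matrices as two commuting involutions. Writing $V,b,h$ also for the representing matrices, the two relations of Definition~\ref{d:HVS}(4) read
\begin{align*}
\overline{V}^{T}&=-\varepsilon\,V\,\overline{h}^{T},\\
Vb&=h-I,
\end{align*}
the $\varepsilon$--hermiticity of $b$ reads $\overline{b}^{T}=\varepsilon b$, the $b$--orthogonality of $h$ reads $\overline{h}^{T}b\,h=b$, and $\sm{\bar V}$ is the matrix $(\overline{V}^{T})^{-1}$. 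So the asserted formulas are $h=-\varepsilon\,V\,(\overline{V}^{T})^{-1}$ and $b=-V^{-1}-\varepsilon\,(\overline{V}^{T})^{-1}$.

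\emph{Uniqueness} falls out of the two relations in (4). The first gives $\overline{h}^{T}=-\varepsilon\,V^{-1}\overline{V}^{T}$; taking the conjugate transpose and using $\varepsilon^{2}=1$ forces $h=-\varepsilon\,V\,(\overline{V}^{T})^{-1}$. Then $Vb=h-I$ together with the invertibility of $V$ forces $b=V^{-1}(h-I)=-\varepsilon\,(\overline{V}^{T})^{-1}-V^{-1}$. Hence any $b,h$ completing $(U;V)$ to an HVS must be the ones in the statement.

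\emph{Existence} is a short direct verification with those definitions. The matrix $h$ is a product of invertible matrices, hence an automorphism of $U$. One then checks, using only $\varepsilon^{2}=1$ and the commuting involutions above: $\overline{b}^{T}=-(\overline{V}^{T})^{-1}-\varepsilon V^{-1}=\varepsilon b$, so $b$ is $\varepsilon$--hermitian; $Vb=-I-\varepsilon\,V\,(\overline{V}^{T})^{-1}=h-I$; and $-\varepsilon\,V\,\overline{h}^{T}=-\varepsilon\,V\,(-\varepsilon\,V^{-1}\overline{V}^{T})=\overline{V}^{T}$, the first relation of (4). Finally, expanding $bh=\varepsilon\,(\overline{V}^{T})^{-1}+(\overline{V}^{T})^{-1}V\,(\overline{V}^{T})^{-1}$ and multiplying on the left by $\overline{h}^{T}=-\varepsilon\,V^{-1}\overline{V}^{T}$ collapses to $\overline{h}^{T}b\,h=-V^{-1}-\varepsilon\,(\overline{V}^{T})^{-1}=b$, so $h$ is $b$--orthogonal. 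Thus $(U;b,h,V)$ satisfies (1)--(4) of Definition~\ref{d:HVS}; the identities in \eqref{eq:immediateproperties} then hold as well, being purely formal consequences of (4).

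No step is conceptually hard; the only one demanding a little care is the $b$--orthogonality relation, which is the sole identity needing a genuine two--factor expansion rather than a single substitution, and where one must keep close track of the sign $\varepsilon$ and of the source and target of each map ($U\to U^{*}$ versus $U^{*}\to U$).
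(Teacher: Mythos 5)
Your verification is correct: solving the two relations in Definition~\ref{d:HVS}(4) for $h$ and then $b$ gives uniqueness, and your matrix checks of hermiticity, $b$--orthogonality and both axioms in (4) are all accurate (including the signs in the two--factor expansion for $\overline{h}^{*}\circ b\circ h=b$). The paper itself states this lemma without proof, quoting \cite{Nem-real}, and your direct computation is exactly the standard argument one would supply there, so nothing further is needed.
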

From the last lemma it follows that the classification of simple HVS is equivalent to the classification
of $\mathbb{C}$--linear isomorphisms $V\colon U^*\to U$.

There is a natural notion of an isomorphism of a HVS:

\begin{definition}\label{def:dirsum}
\begin{itemize}
\item[(a)] Two HVS $(U;b,h,V)$ and $(U';b',h',V')$ are \emph{isomorphic}, denoted by $\simeq$,
 if there exists an isomorphism $\phi:U\to U'$
such that
$b=\bar{\phi}^*b'\phi$,
$h=\phi^{-1}h'\phi$,
and $V=\phi^{-1}V'  (\bar{\phi}^{*})^{-1}$.

\item[(b)]
If $(U_1;b_1,h_1,V_1)$ and $(U_2;b_2,h_2,V_2)$ are two HVS with the same sign $\varepsilon$, their \emph{direct sum} is given
by $(U_1\oplus U_2;b_1\oplus b_2,h_1\oplus h_2,V_1\oplus V_2)$. 
For sum of $m$ copies of $\mv$ we write $m\cdot \mv$.

\item[(c)] The \emph{conjugate} of $\mv=(U;b,h,V)$ is defined as
$\bar{\mv}=(U;\bar{b},\bar{h},\bar{V})$.
\end{itemize}
\end{definition}

\subsection{Examples and classification of HVS}
Here we shall follow closely \cite{Nem-real}, unless stated otherwise all results in this section are proved in \cite{Nem-real}.
For $k\ge 1$, $J_k$ denotes the $(k\times k)$--Jordan block
 with eigenvalue $1$.

\begin{example}\label{ex:mv2k}
For $\lambda\in\mathbb{C}^*\setminus S^1$ and $k\ge 1$, the quadruple
\[
\mv_{\lambda}^{2k}=\left(\mathbb{C}^{2k};%
\left(\begin{matrix}0&I\\\varepsilon I&0\end{matrix}\right),%
\left(\begin{matrix}\lambda J_k&0\\ 0&\frac{1}{\bar{\lambda}}{J_k^*}^{-1}\end{matrix}\right),%
\left(\begin{matrix}0&\varepsilon(\lambda J_k-I)\\\frac{1}{\bar{\lambda}}{J_k^*}^{-1}-I&0\end{matrix}\right)\right)
\]
defines a HVS. Moreover, $\mv_{\lambda}^{2k}$ and
$\mv_{1/\bar{\lambda}}^{2k}$ are isomorphic.
\end{example}

Before we show the next example we need a computational lemma;
here one needs to consider the two square roots of $\varepsilon$. The two canonical sign  choices for them are motivated by
Hodge theoretical
sign--conventions (cf. \cite{Nem-real}, Sections 5 and 6) of the variation structures associated with
isolated hypersurface singularities $(\CC^{n+1},0)\to (\CC,0)$, where $\varepsilon=(-1)^n$.

\begin{lemma}\label{l:BB}
For any $k>1$ there are precisely two non-degenerate $\epsilon$--hermitian forms
 (up to a real positive scaling), denoted by $b^k_\pm$, such that
\[\bar{b}^*=\varepsilon b\text{ and }J_k^*bJ_k=b.\]
By convention, the signs are fixed by $(b^k_{\pm})_{1,k}=\pm i^{-n^2-k+1}$, where $\epsilon=(-1)^n$. $b$ is
also \emph{left diagonal}, i.e., 
the entries of $b$ also satisfy: $b_{i,j}=0$ for $i+j\leq k$ and $b_{i,k+1-i}=(-1)^{i+1}b_{1,k}$.
\end{lemma}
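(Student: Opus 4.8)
The plan is to solve the two conditions $\bar b^* = \varepsilon b$ and $J_k^* b J_k = b$ directly, exploiting the second (monodromy-invariance) condition to show $b$ is forced into the stated ``left-diagonal'' anti-triangular shape, and then using the first (hermitian) condition to pin down the one remaining scalar up to a real positive factor and the sign $\pm$. Concretely, write $b=(b_{i,j})$ and expand $J_k^* b J_k = b$. Since $J_k$ is a single Jordan block with eigenvalue $1$, right-multiplication by $J_k$ adds column $j$ to column $j+1$, and left-multiplication by $J_k^*$ adds row $i$ to row $i+1$; equating with $b$ gives a recursion $b_{i,j} = b_{i,j} + b_{i-1,j} + b_{i,j-1} + b_{i-1,j-1}$ (with the convention that indices $0$ vanish), i.e. $b_{i-1,j} + b_{i,j-1} + b_{i-1,j-1} = 0$ for all $i,j$.

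The next step is to run this recursion from the bottom-left corner. Taking $i=j=1$ in a shifted form shows the entries on the anti-diagonals below the main anti-diagonal all vanish; more precisely one proves by induction on $i+j$ that $b_{i,j}=0$ whenever $i+j \le k$, which is exactly the first asserted vanishing. On the main anti-diagonal $i+j = k+1$ the recursion relating $b_{i,k+1-i}$ to $b_{i-1,k+2-i}$ (whose ``lower'' neighbours already vanish) collapses to $b_{i,k+1-i} = -b_{i-1,k+2-i}$, which telescopes to $b_{i,k+1-i} = (-1)^{i+1} b_{1,k}$, the second asserted identity. So after imposing only monodromy-invariance, $b$ is determined along and below the anti-diagonal by the single number $b_{1,k}$; the entries strictly above the anti-diagonal remain free at this stage but — and this is the point of the ``up to scaling'' phrasing — they do not contribute new moduli once we also impose the hermitian condition, since one can check (again by the same recursion, now run downward) that the space of solutions $b$ of both conditions with prescribed $b_{1,k}$ is a single point, or alternatively that any two such $b$ differ by one proportional to the ``canonical'' one.

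Then I would impose $\bar b^* = \varepsilon b$, i.e. $\overline{b_{j,i}} = \varepsilon\, b_{i,j}$. Applied to the anti-diagonal entries and using $b_{i,k+1-i} = (-1)^{i+1}b_{1,k}$ together with $b_{k,1} = (-1)^{k+1}b_{1,k}$, this reads $\overline{(-1)^{k+1} b_{1,k}} = \varepsilon\, b_{1,k}$, i.e. $(-1)^{k+1}\overline{b_{1,k}} = (-1)^n b_{1,k}$. Writing $b_{1,k} = i^{-n^2-k+1}\cdot t$ with $t\in\CC^*$, a short computation with $\overline{i^{-n^2-k+1}} = i^{\,n^2+k-1}$ and $(-1)^{k+1} = i^{2k+2}$ shows this condition becomes exactly $\bar t = t$, so $t$ is real and nonzero; the two components $t>0$ and $t<0$ give the two forms $b^k_{\pm}$, normalised by $(b^k_\pm)_{1,k} = \pm i^{-n^2-k+1}$, and rescaling by positive reals stays within a component. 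Hence there are exactly two such forms up to positive scaling, as claimed.

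The main obstacle I expect is bookkeeping rather than conceptual: correctly handling the edge cases in the recursion near the corners of the matrix (so that the induction $b_{i,j}=0$ for $i+j\le k$ and the telescoping on $i+j=k+1$ are clean), and, more delicately, verifying that the entries \emph{above} the anti-diagonal really are rigidified by the combination of both conditions so that no extra continuous parameters sneak in — this is what makes ``precisely two (up to real positive scaling)'' a sharp count rather than a lower bound. The sign-chasing with powers of $i$ in the normalisation $(b^k_\pm)_{1,k} = \pm i^{-n^2-k+1}$ is the other fiddly point, since it must be reconciled with the Hodge-theoretic sign conventions referenced from \cite{Nem-real}; I would treat that normalisation as a definition and only check it is consistent with $\bar b^* = \varepsilon b$.
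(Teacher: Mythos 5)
Your derivation of the structural part of the lemma is correct and is essentially the computation the paper leaves to the reference: the paper itself gives no proof (the lemma is quoted from \cite{Nem-real}), and your recursion $b_{i-1,j}+b_{i,j-1}+b_{i-1,j-1}=0$ coming from $J_k^*bJ_k=b$, the induction giving $b_{i,j}=0$ for $i+j\le k$, the telescoping $b_{i,k+1-i}=(-1)^{i+1}b_{1,k}$, non-degeneracy forcing $b_{1,k}\ne 0$, and the check that $\bar b^*=\varepsilon b$ forces $b_{1,k}\in i^{-n^2-k+1}\,\mathbb{R}^*$ are all sound.

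The genuine gap is the step you yourself flagged and then resolved incorrectly: it is \emph{not} true that the two conditions with prescribed $b_{1,k}$ have a unique solution, so the entries above the anti-diagonal are not rigid. For $k=2$, $\varepsilon=+1$, both
\begin{equation*}
\begin{pmatrix} 0 & i\\ -i & 0\end{pmatrix}
\qquad\text{and}\qquad
\begin{pmatrix} 0 & i\\ -i & 1\end{pmatrix}
\end{equation*}
are non-degenerate, hermitian and $J_2$-invariant with the same corner entry, and they are not proportional; in general the space of $J_k$-invariant forms is $k$-dimensional (it is a Hom-space between cyclic $J_k$-modules), so for $k\ge 2$ there is always a positive-dimensional family even after fixing $b_{1,k}$. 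Consequently ``precisely two up to a real positive scaling'' is not literally a statement about matrices, and your argument does not establish the count. The statement has to be read, as in \cite{Nem-real} and as it is used in Lemmas~\ref{l:mvtilde} and \ref{l:transpose} (``recognize the isomorphism type \dots\ up to a real positive re-scaling''), as a classification up to isomorphisms $b\mapsto \bar\phi^* b\,\phi$ with $\phi$ commuting with $J_k$. What is then still missing from your proof is exactly this equivalence step: (i) any invertible $\phi$ commuting with $J_k$ is a polynomial in $J_k$, and using $b_{1,q}=0$ for $q<k$ one gets $(\bar\phi^* b\phi)_{1,k}=|\phi_{1,1}|^2\,b_{1,k}$, so the sign of $b_{1,k}\,i^{\,n^2+k-1}$ is an invariant and the two signs give non-isomorphic forms (both occur, since $-b$ is again a solution); and (ii) any two solutions with the same sign are equivalent under such a $\phi$ — this needs either an inductive normalization killing the above-anti-diagonal freedom by suitable choices of the coefficients of $\phi$, or an appeal to Milnor's classification of isometric structures \cite{Milnor-forms}. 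Without (ii) you only obtain ``at least two classes'', not ``precisely two''.
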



\begin{example}\label{e:mvlambda}
Let $\lambda\in S^1$. Up to isomorphism there are two non--degenerate HVS such that $h=\lambda J_k$. These are
\[\mv^k_{\lambda}(\pm 1)=\left(\mathbb{C}^k;b^k_{\pm},\lambda J_k,(\lambda J_k-I)(b^k_\pm)^{-1}\right).\]
\end{example}

Notice  that these structures are simple unless $\lambda=1$.
In fact,
if $\lambda\neq 1$ then any HVS with $h=\lambda J_k$ is both non-degenerate and simple.
The case with eigenvalue $1$ admits also a pair of degenerate HVS.

\begin{lemma}\label{l:mvtilde}
For $k\ge 2$ there are two degenerate HVS with $h=J_k$. They are
\[\mvt^k_1(\pm 1)=\left(\mathbb{C}^k;\widetilde{b}_\pm,J_k,\widetilde{V}_{\pm}^k\right),\]
where
\[\widetilde{b}^k_\pm=\left(\begin{matrix} 0 & 0\\ 0&b^{k-1}_\pm \end{matrix}\right)\]
and $\widetilde{V}_\pm^k$ is uniquely determined by $b$ and $h$ (up to an isomorphism).
Moreover,   $\mvt^k_1(\pm 1)$  is simple.
In fact, the entries of $V^{-1}$ satisfy: $(V^{-1})_{i,j}=0$ for $i+j\geq k+2$, $(V^{-1})_{i,k+1-i}=
\pm (-1)^{i+1} i^{-n^2-k}$. In order to recognize the isomorphism type, we have to recognize these
entries up to a real positive re-scaling.
\end{lemma}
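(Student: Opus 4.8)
The claim to prove is Lemma~\ref{l:mvtilde}. Let me sketch a proof plan.

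\textbf{Plan of proof.} The strategy is to reduce the existence question for degenerate HVS with $h=J_k$ to the non-degenerate situation handled in Lemma~\ref{l:BB} and Example~\ref{e:mvlambda}, exploiting the structure of the kernel of $b$. First I would observe that if $(U;b,h,V)$ is any HVS with $h=J_k$ and $b$ degenerate, then $\ker b$ must be $h$-invariant: indeed from $\bar h^*\circ b\circ h=b$ one gets that $h$ preserves $\ker b$, and since $J_k$ is a single Jordan block, $\ker b$ is a $J_k$-invariant subspace, hence equals $\mathrm{span}(e_1,\dots,e_j)$ for some $j$ in the standard basis where $h=J_k$. A dimension/nondegeneracy count on the induced form on $U/\ker b$, together with the requirement that this quotient carry a non-degenerate HVS with monodromy the induced Jordan block $J_{k-j}$, forces $j=1$: if $j\ge 2$ the quotient form is $J_{k-j}$-invariant and non-degenerate, but then one checks (using $V\circ b = h-I$ and the axiom $\overline{\theta^{-1}\circ V^*}=-\varepsilon V\circ \bar h^*$) that $V$ cannot satisfy $V\circ b=h-I$ on all of $U$ because $h-I$ has rank $k-1$ while the image of $V\circ b$ lands in a space too small; chasing this carefully pins down $j=1$. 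This identifies $\widetilde b^k_\pm$ as the block form $\mathrm{diag}(0,b^{k-1}_\pm)$ claimed in the statement, with the two sign choices coming directly from the two choices in Lemma~\ref{l:BB}.

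\textbf{Uniqueness of $\widetilde V$.} Next I would show that, given $b=\widetilde b^k_\pm$ and $h=J_k$, the operator $V$ is uniquely determined up to isomorphism. The relation $V\circ b = h-I$ determines $V$ on $\mathrm{im}\, b$, which is the coordinate subspace dual to $(e_2,\dots,e_k)$; the remaining freedom is the value of $V$ on the one-dimensional complement, i.e.\ one column of a matrix representative. The two HVS axioms in Definition~\ref{d:HVS}(4) then become linear (plus one conjugate-linear) constraints on that column, and I would verify that together with the normalization afforded by the automorphism group of $(U;b,h)$ — which, since $b$ is concentrated on the $J_{k-1}$-block, contains enough unipotent maps to absorb the ambiguity — the structure is rigid. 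This is essentially the same computation as in \cite{Nem-real}; I would cite it and only indicate the key relations.

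\textbf{Simplicity and the entries of $V^{-1}$.} Finally, to see that $\mvt^k_1(\pm 1)$ is simple, i.e.\ $V$ is an isomorphism, I would compute $\det V$ directly from the block structure: writing $V$ in the standard basis, the relation $V\circ b=h-I$ fills in the lower-right $(k-1)\times(k-1)$ part, and the single extra column (the one not in $\mathrm{im}\, b$) contributes the last degree of freedom, whose value — forced by the conjugate-linear axiom — turns out to be nonzero, making $V$ triangular with nonzero diagonal after a permutation. For the explicit entries of $V^{-1}$, I would use Lemma~\ref{l:VtoHVS}: since $V$ is an isomorphism, $b=-V^{-1}-\varepsilon\,\sm{\bar V}$, so $V^{-1}$ and $\sm{\bar V}$ are determined by $b$ up to the antihermitian ambiguity, and matching this against $b=\widetilde b^k_\pm=\mathrm{diag}(0,b^{k-1}_\pm)$ together with the left-diagonal normalization of $b^{k-1}_\pm$ from Lemma~\ref{l:BB} yields $(V^{-1})_{i,j}=0$ for $i+j\ge k+2$ and the anti-diagonal values $(V^{-1})_{i,k+1-i}=\pm(-1)^{i+1} i^{-n^2-k}$; the shift by one in the index (compared to $b^{k-1}_\pm$, which lived on indices $2,\dots,k$) accounts for the exponent $-n^2-k$ rather than $-n^2-(k-1)+1$.

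\textbf{Main obstacle.} The delicate point is the rigidity step: showing that the residual freedom in $V$ (the column not determined by $V\circ b=h-I$) is genuinely killed by the automorphisms of $(U;b,h)$ and not a source of a continuous family or a third isomorphism class. One must check that the stabilizer of $(b,h)$ acts transitively on the admissible values of that column, which requires understanding the unipotent automorphisms of the degenerate form $\widetilde b^k_\pm$ — these are less standard than in the non-degenerate case because the form has a kernel, so the usual Milnor-type classification does not apply verbatim. I expect this to reduce to a short but somewhat fiddly matrix computation, and I would lean on the analogous argument in \cite{Nem-real} to keep it brief.
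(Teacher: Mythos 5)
First, a point of reference: the paper itself gives no proof of this lemma --- Section~\ref{s:variation} states explicitly that all results of that subsection are taken from \cite{Nem-real} --- so your attempt can only be measured against what a complete argument would require, not against an argument printed here. Your architecture is the right one, and parts of it are even simpler than you suggest: $\ker b$ is $h$-invariant, hence of the form $\mathrm{span}(e_1,\dots,e_j)$; and the step you describe as ``chasing this carefully'' is immediate, since $V\circ b=h-I$ gives $k-1=\operatorname{rank}(h-I)=\operatorname{rank}(V\circ b)\le\operatorname{rank} b=k-j$, so $j\le 1$, and degeneracy forces $j=1$. The identification of the induced non-degenerate form on $U/\ker b$ with $b^{k-1}_\pm$ via Lemma~\ref{l:BB}, and the bookkeeping for the entries of $V^{-1}$ (including the index shift explaining the exponent $-n^2-k$), are also sound.

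The genuine gap is that the actual content of the lemma --- existence of $\widetilde V^k_\pm$, its uniqueness up to isomorphism, and simplicity --- is precisely what you leave as assertions. You never verify that a $V$ satisfying both axioms of Definition~\ref{d:HVS}(4) exists for $(\,\widetilde b^k_\pm,J_k)$; the relation $V\circ b=h-I$ only pins $V$ on $\operatorname{im}b$, and the claim that the remaining column is ``forced'' nonzero by the conjugate-linear axiom, as well as the claim that the stabilizer of $(b,h)$ acts transitively on the admissible completions (ruling out a continuous family or a third class), are exactly the computations you defer to \cite{Nem-real}. You have correctly located this as the delicate point, but locating it is not resolving it: without the transitivity statement the phrase ``uniquely determined up to an isomorphism'' is unproved, and without either an explicit admissible $V$ or a structural argument the simplicity claim is circular (you invert $V$ to apply Lemma~\ref{l:VtoHVS} before knowing it is invertible). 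A cleaner route to simplicity, avoiding the explicit column computation, is to use both consequences of the axioms: $V\circ b=h-I$ gives $\ker V\cap\operatorname{im}b=0$, while $b\circ V=\overline{h}^{*\,-1}-I$ forces any one-dimensional $\ker V$ to coincide with $\ker(\overline{h}^{*\,-1}-I)$, which for $k\ge 2$ lies inside $\operatorname{im}b$ --- a contradiction; some such argument (or the explicit matrix for $\widetilde V^k_\pm$ with the axioms checked) is needed to close the proof.
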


For $k=1$ (i.e. $U=\mathbb{C}$), and $h=I$, the structures can be written down more explicitly;
there are the following five $\varepsilon$--HVS's with  $\varepsilon=(-1)^n$:

\begin{align*}
\mv^1_1(\pm 1)&=(\mathbb{C},\pm i^{-n^2},I,0)\\
\mvt^1_1(\pm 1)&=(\mathbb{C},0,I, \pm i^{n^2+1})\\
\mathcal{T}&=(\mathbb{C},0,I,0).
\end{align*}

From all these examples the structures $\mv^k_1(\pm 1)$ and $\mathcal{T}$ are non--simple,
and $\mvt^1_1(\pm 1)$ are simple.
Concluding, for any $\lambda\in S^1$ and in each dimension $k$, there are precisely two non-equivalent simple
variation structures with $h=\lambda J_k$.
We use the following uniform notation for them:

\begin{equation}\label{eq:wk}
\mw^k_\lambda(\pm 1)=%
\begin{cases}
\mv^k_\lambda(\pm 1)&\text{if $\lambda\neq 1$}\\
\mvt^k_1(\pm 1)&\text{if $\lambda=1$.}
\end{cases}
\end{equation}

\begin{proposition}\label{p:classification}
A simple HVS is uniquely expressible as a sum of indecomposable ones up to ordering of summands and up to
an isomorphism. The indecomposable pieces are
\begin{eqnarray*}
 & \mw^k_\lambda(\pm 1)&\text{ for $k\ge 1$, $\lambda\in S^1$}\\ 
& \mv^{2k}_\lambda &\text{ for $k\ge 1$, $0<|\lambda|<1$.}\end{eqnarray*}

\end{proposition}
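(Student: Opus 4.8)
The plan is to reduce the classification of simple HVS to the classification of $\mathbb{C}$--linear isomorphisms $V\colon U^*\to U$ via Lemma~\ref{l:VtoHVS}, and then to run a standard primary--decomposition argument on the associated monodromy operator $h=-\varepsilon V\sm{\bar V}$. First I would observe that, by that lemma, a simple HVS is the same datum as a pair $(U;V)$ with $V$ invertible, and that isomorphism of HVS corresponds exactly to the equivalence $V\sim \phi^{-1}V(\bar\phi^*)^{-1}$; likewise direct sum of HVS corresponds to direct sum of such pairs. So the statement to be proved becomes: every invertible $V$ decomposes, uniquely up to order and the above equivalence, into the blocks listed. The monodromy $h$ attached to $V$ decomposes $U=\bigoplus_\mu U_\mu$ into generalized eigenspaces for eigenvalues $\mu\in\mathbb{C}^*$. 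The compatibility relation $\overline{\theta^{-1}\circ V^*}=-\varepsilon V\circ\bar h^*$ forces $V$ to pair $U_\mu$ with $U_{1/\bar\mu}^*$ nontrivially, hence $b$ (and $V$) respects the splitting $U_\mu\oplus U_{1/\bar\mu}$ (which is a single summand $U_\mu$ when $|\mu|=1$, and a hyperbolic pair otherwise). This is where the two families in the statement come from: $|\lambda|=1$ gives the $\mw^k_\lambda(\pm1)$ pieces, and $0<|\lambda|<1$ gives the hyperbolic pieces $\mv^{2k}_\lambda$.

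Next, on each eigenspace block I would refine by Jordan type of $h$. For $|\mu|\neq 1$ the block is a sum of $\mv^{2k}_\mu$'s: here $b$ is automatically hyperbolic, $V$ is determined by $h$ up to the equivalence, and $h$ restricted to $U_\mu$ is just a direct sum of Jordan blocks $\mu J_k$ by the usual Jordan theory, with no further invariant (the two apparent choices $\mu$ and $1/\bar\mu$ being identified, as already noted in Example~\ref{ex:mv2k}). For $|\mu|=1$, the subtle point is that $h=\mu J_k$ on an isotypic summand does \emph{not} determine $b$: by Lemma~\ref{l:BB} (case $\lambda\neq1$) and Lemma~\ref{l:mvtilde} (case $\lambda=1$), there are precisely two non--degenerate, resp.\ simple, hermitian forms compatible with a single Jordan block, normalized as $b^k_\pm$ resp.\ $\widetilde b^k_\pm$. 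One then invokes the classification of hermitian forms equipped with an isometry of given Jordan type (Milnor \cite{Milnor-forms}, \cite{Neu0}): such a form splits orthogonally as a sum of blocks of the listed indecomposable shapes, and the sign $\pm1$ attached to each block is a complete invariant (recognizable, as the lemmas state, from a single off--diagonal entry of $b$, resp.\ $V^{-1}$, up to positive real rescaling). Assembling these over all $\mu$ gives existence of the decomposition.

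For uniqueness I would argue as follows. The generalized eigenspace splitting of $h$ is canonical, so the multiplicities of $\mv^{2k}_\lambda$ (for $|\lambda|<1$, grouping $\lambda$ with $1/\bar\lambda$) are determined by the Jordan data of $h$ alone. For $|\lambda|=1$, the number of Jordan blocks of each size $k$ with eigenvalue $\lambda$ is again determined by $h$; what remains is to see that the two sign--types $\mw^k_\lambda(+1)$ and $\mw^k_\lambda(-1)$ cannot be interchanged or traded against one another by an isomorphism. This is exactly the classical uniqueness statement for isometric structures / hermitian forms with a fixed isometry, i.e.\ no nontrivial Witt cancellation across the two signs, which is part of the cited classification over $\mathbb{R}$; concretely one checks that the normalized off--diagonal entries of $b$ (resp.\ $V^{-1}$) are preserved up to positive reals under the allowed conjugations, so the sign is an honest invariant. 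Finally I would record that indecomposability of each listed piece is immediate once one knows a single Jordan block cannot be split further and the hermitian form on it is, up to scaling, one of the two $b^k_\pm$.

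The main obstacle I anticipate is the $|\lambda|=1$, eigenvalue--$1$, degenerate-versus-simple bookkeeping: one must be careful that the \emph{simple} HVS with $h=J_k$ is $\mvt^k_1(\pm1)$ (the one with degenerate $b$ but invertible $V$), not $\mv^k_1(\pm1)$, and that the uniform notation $\mw^k_\lambda(\pm1)$ in \eqref{eq:wk} is precisely engineered to make the $\lambda\to1$ specialization land on the simple piece. The rest is a careful orchestration of results already in \cite{Nem-real} and \cite{Milnor-forms}; indeed, as the section header indicates, this proposition is essentially a repackaging of the real classification theorem of \cite{Nem-real} in the HVS language set up here, so the ``proof'' is mostly an assembly of the lemmas above plus the eigenspace reduction.
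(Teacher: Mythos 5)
You should first note that the paper itself offers no proof of Proposition~\ref{p:classification}: the section opens with the statement that all results there are taken from \cite{Nem-real}, so the ``paper's proof'' is a citation of the classification theorem for real Seifert forms/variation structures. Your outline does follow the strategy of that source (pass to the pair $(U;V)$ via Lemma~\ref{l:VtoHVS}, split by generalized eigenspaces of $h$, pair $U_\mu$ with $U_{1/\bar\mu}$ to get the hyperbolic blocks $\mv^{2k}_\lambda$, and use Milnor's classification of isometric structures for the unit-circle eigenvalues), so in spirit you are reconstructing the cited argument rather than inventing a new one.

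However, there is a genuine gap exactly at the point you flag as ``the main obstacle'' and then do not close: the unipotent part $\lambda=1$. There $b$ restricted to the generalized $1$--eigenspace is degenerate on every simple summand, so the classification of \emph{non-degenerate} hermitian forms with isometry (\cite{Milnor-forms}, invoked in your second and third paragraphs) simply does not apply, and Lemma~\ref{l:mvtilde} only tells you what the simple HVS's are when $h$ is a \emph{single} Jordan block $J_k$ — it does not show that an arbitrary simple HVS with unipotent monodromy splits orthogonally (compatibly with $b$, $h$ \emph{and} $V$) into such blocks, nor that the signs attached to the blocks are unchanged under isomorphism. That splitting and the sign-uniqueness for the $\lambda=1$ summand are precisely the new content of the classification in \cite{Nem-real}; the standard route is to classify the non-singular sesquilinear form $V^{-1}$ (equivalently $V$) up to congruence, or to run a filtration/induction argument on $\ker(h-I)^j$, neither of which appears in your sketch. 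Appealing at the end to ``results already in \cite{Nem-real}'' is circular here, since that is the very theorem being proved. A similar, though milder, omission occurs for $|\mu|\neq 1$, where the claim that $V$ is determined by $h$ up to equivalence on the hyperbolic part is asserted rather than argued. To make the proposal into a proof you would need to supply the orthogonal decomposition and uniqueness statement for the degenerate unipotent case explicitly, for instance via the congruence classification of the sesquilinear form $V$.
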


\begin{convention}
From now on, all HVS we shall discuss, are assumed to be simple.
\end{convention}

The above proposition allows us to define some invariants of HVS.

\begin{definition}\label{d:pklambda}
Let $\mv$ be a simple
HVS $\mv$. Let us express it, according to Proposition~\ref{p:classification},  
as
\begin{equation}\label{eq:mvsum}
\mv=
\bigoplus_{\substack{0<|\lambda|<1\\ k\ge 1}} q^k_\lambda\cdot   \mv^{2k}_\lambda
\oplus
\bigoplus_{\substack{|\lambda|=1\\ k\ge 1, \ u=\pm 1}} p^k_\lambda(u)\cdot  \mw^{k}_\lambda(u),
\end{equation}
where  the expression of type $r\cdot\mv$ is a shorthand for a sum $\mv\oplus\dots\oplus\mv$ ($r$ times), cf. (\ref{def:dirsum}).
Then $\{q^k_\lambda\}_{|\lambda|<1}$ and $\{p^k_\lambda(\pm 1)\}_{\lambda\in S^1}$
are called the \emph{H--numbers} of $\mv$.
\end{definition}

\begin{remark}
The above classification result is over $\mathbb{C}$ or, equivalently, over $\mathbb{R}$. One can
consider HVS's over $\mathbb{Z}$ as well, but then the classification is unknown.
\end{remark}

If $V$ is defined over the real numbers, then the above decomposition has some symmetries. Let $s$ be
$1$ if $\lambda=1$ and $0$ if $\lambda\in S^1\setminus \{1\}$. Then, with $\varepsilon=(-1)^n$,

\begin{equation}
\overline{\mv^{2k}_\lambda}=\mv^{2k}_{\bar{\lambda}} \ \ \ \mbox{for $\lambda\not\in S^1$, and}
\end{equation}
\begin{equation}
\overline{\mw^k_\lambda(\pm 1)}=\mw^k_{\bar{\lambda}}(\pm (-1)^{n+k+1+s}) \ \ \ \mbox{for $\lambda\in S^1$}.
\end{equation}

Therefore we have the following result

\begin{lemma}\label{l:symmetry}
If in the HVS $\mv$, the matrix $V$ is defined over reals, then
\[q^k_\lambda=q^k_{\bar{\lambda}} \ \ (\mbox{for} \ |\lambda|<1) \ \ \ \mbox{and} \ \ \
p^k_\lambda(\pm 1)=p^k_{\bar{\lambda}}(\pm (-1)^{n+k+1+s}) \ \ (\mbox{for} \ |\lambda|=1).\]
\end{lemma}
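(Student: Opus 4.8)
The plan is to reduce Lemma~\ref{l:symmetry} directly to the two displayed conjugation identities
$\overline{\mv^{2k}_\lambda}=\mv^{2k}_{\bar\lambda}$ and $\overline{\mw^k_\lambda(\pm1)}=\mw^k_{\bar\lambda}(\pm(-1)^{n+k+1+s})$
which precede the statement, and which I may assume. First I would observe that, since $V$ is defined over $\mathbb{R}$, the HVS $\mv=(U;b,h,V)$ satisfies $\bar{\mv}\simeq\mv$: indeed $\bar V=V$ as a matrix (over a real basis), and by Lemma~\ref{l:VtoHVS} the associated $b$ and $h$ are the unique operators determined by $V$ via $h=-\varepsilon V\sm{\bar V}$ and $b=-V^{-1}-\varepsilon\sm{\bar V}$, so complex conjugation of these formulas shows $\bar b,\bar h$ are built from $\bar V=V$ in exactly the same way; hence $(U;\bar b,\bar h,\bar V)=(U;b,h,V)$, i.e. $\bar{\mv}=\mv$ as quadruples (or at least $\simeq$). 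So $\mv\simeq\bar{\mv}$.

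Next I would conjugate the canonical decomposition \eqref{eq:mvsum}. Complex conjugation is additive on direct sums, $\overline{\mathcal{W}\oplus\mathcal{W}'}=\bar{\mathcal{W}}\oplus\bar{\mathcal{W}'}$, so
\[
\bar{\mv}=\bigoplus_{0<|\lambda|<1,\ k\ge1} q^k_\lambda\cdot\overline{\mv^{2k}_\lambda}\ \oplus\ \bigoplus_{|\lambda|=1,\ k\ge1,\ u=\pm1} p^k_\lambda(u)\cdot\overline{\mw^k_\lambda(u)}.
\]
Substituting the two conjugation identities turns this into
\[
\bar{\mv}=\bigoplus_{0<|\lambda|<1,\ k\ge1} q^k_\lambda\cdot\mv^{2k}_{\bar\lambda}\ \oplus\ \bigoplus_{|\lambda|=1,\ k\ge1,\ u=\pm1} p^k_\lambda(u)\cdot\mw^k_{\bar\lambda}(u\cdot(-1)^{n+k+1+s}),
\]
and then reindexing $\lambda\mapsto\bar\lambda$ (a bijection of $\{0<|\lambda|<1\}$ and of $S^1$, with $k$ and $s$ unchanged since $s$ depends only on whether $\bar\lambda=1\Leftrightarrow\lambda=1$) rewrites $\bar{\mv}$ as a sum of the indecomposables $\mv^{2k}_\lambda$ with multiplicity $q^k_{\bar\lambda}$ and of $\mw^k_\lambda(u)$ with multiplicity $p^k_{\bar\lambda}(u\cdot(-1)^{n+k+1+s})$. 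Now invoke the uniqueness part of Proposition~\ref{p:classification}: the multiset of indecomposable summands of an HVS is an isomorphism invariant. Comparing the two decompositions of $\mv\simeq\bar{\mv}$ forces $q^k_\lambda=q^k_{\bar\lambda}$ and $p^k_\lambda(u)=p^k_{\bar\lambda}(u\cdot(-1)^{n+k+1+s})$ for every $k$, $\lambda$, $u$, which is exactly the claim.

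I expect the only genuinely delicate point to be the very first step: that $V$ real implies $\mv\simeq\bar{\mv}$ on the nose as an isomorphism of variation structures (not merely that the matrices agree numerically), so that Proposition~\ref{p:classification}'s uniqueness can be applied. One must be slightly careful that "$V$ defined over the reals" is a statement about the existence of a real basis of $U$ in which $V$ has a real matrix, and that conjugating $\mv$ in that basis fixes it; the cleanest route is the Lemma~\ref{l:VtoHVS} argument above, since it makes $b,h$ manifest functions of $V$ and conjugation commutes with those functions. Everything after that is formal bookkeeping with the conjugation table and the uniqueness of the decomposition, and the sign exponent $n+k+1+s$ is simply carried along verbatim from the displayed identity for $\overline{\mw^k_\lambda(\pm1)}$.
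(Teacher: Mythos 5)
Your proposal is correct and is essentially the argument the paper intends: the lemma is stated right after the displayed conjugation identities with a "Therefore", i.e.\ the paper's (implicit) proof is exactly your route of noting $\bar{\mv}\simeq\mv$ for real $V$ (legitimate via Lemma~\ref{l:VtoHVS}, since the standing convention makes $\mv$ simple), conjugating the decomposition \eqref{eq:mvsum} termwise, and matching multiplicities by the uniqueness in Proposition~\ref{p:classification}. Your explicit treatment of the step ``$V$ real $\Rightarrow\bar b=b,\ \bar h=h$'' is a correct filling-in of what the paper leaves unsaid.
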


\noindent Moreover, by an easy check of the coefficient $b_{k,1}$ one has

\begin{lemma}\label{l:minus}
Let  $V$ be the variation operator of the simple structure $\mv$. Let $-\mv$ be the structure
corresponding to the variation operator  $-V$ (see Lemma~\ref{l:VtoHVS}). Then $-\mw^k_\lambda(\pm 1)\simeq\mw^k_\lambda(\mp 1)$, and  $-\mv^{2k}_\lambda\simeq\mv^{2k}_\lambda$.
\end{lemma}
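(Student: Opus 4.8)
The plan is to push everything down to the level of the variation operator and then carry out an explicit sign bookkeeping on the two families of indecomposable pieces. By Lemma~\ref{l:VtoHVS} the assignment $V\mapsto (U;b,h,V)$, with $h=-\varepsilon V\sm{\bar V}$ and $b=-V^{-1}-\varepsilon\sm{\bar V}$, induces a bijection between isomorphism classes of $\CC$--linear isomorphisms $V\colon U^*\to U$ (two being identified when $V=\phi^{-1}V'(\bar\phi^*)^{-1}$) and isomorphism classes of simple HVS. These formulas are natural in $V$, so any $\phi$ realising an isomorphism of the operators $V$, $V'$ automatically realises an isomorphism of the full structures; hence it suffices to produce, in each case, a $\phi$ carrying $-V$ to $V$ (resp.\ to the operator of $\mw^k_\lambda(\mp1)$). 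Once the statement is known on the indecomposable pieces, the version for a general simple $\mv$ follows from Proposition~\ref{p:classification}, since negating $V$ respects direct sums.

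For the hyperbolic pieces $\mv^{2k}_\lambda$ with $0<|\lambda|<1$, I would use that $V$ is block anti-diagonal on $\CC^{k}\oplus\CC^{k}$ and take $\phi=\mathrm{diag}(-I_k,I_k)$. Conjugation by this $\phi$ changes the sign of both off-diagonal blocks, so $\phi(-V)\phi=V$; equivalently one checks directly that $\phi$ sends the triple $(-b,\,\lambda J_k\oplus\tfrac1{\bar\lambda}{J_k^*}^{-1},\,-V)$ of $-\mv^{2k}_\lambda$ to the triple of $\mv^{2k}_\lambda$ (note $\phi b\phi=-b$ and $\phi$ commutes with the diagonal $h$). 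This gives $-\mv^{2k}_\lambda\simeq\mv^{2k}_\lambda$.

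For the pieces $\mw^k_\lambda(u)$ with $\lambda\in S^1$, I would argue through the sign-distinguishing entry, as the phrase ``an easy check of the coefficient $b_{k,1}$'' indicates. Observe first that negating $V$ leaves $h=-\varepsilon V\sm{\bar V}$ unchanged (it is quadratic in $V$) and replaces $V^{-1}$, hence also $b$, by its negative. When $\lambda\neq1$ we have $\mv^k_\lambda(u)=(\CC^k;b^k_u,\lambda J_k,(\lambda J_k-I)(b^k_u)^{-1})$, and by Lemma~\ref{l:BB} the two forms $b^k_\pm$ are distinguished, up to positive real scaling, by the sign of $(b^k_\pm)_{1,k}=\pm i^{-n^2-k+1}$; since $-b^k_u$ still satisfies $\overline{(-b)}^*=\varepsilon(-b)$ and $J_k^*(-b)J_k=-b$ but has $(1,k)$-entry of the opposite sign, it is a positive real multiple of $b^k_{-u}$, and so $-\mv^k_\lambda(u)\simeq\mv^k_\lambda(-u)$. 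When $\lambda=1$ and $k\ge2$, Lemma~\ref{l:mvtilde} recognises $\mvt^k_1(u)$ (up to positive real scaling) by the entries $(V^{-1})_{i,k+1-i}=u\,(-1)^{i+1}i^{-n^2-k}$, all of whose signs are flipped upon passing from $V$ to $-V$, which yields exactly $\mvt^k_1(-u)$; the case $k=1$ is immediate from the explicit list, as $-\mvt^1_1(u)=(\CC,0,I,-u\,i^{n^2+1})=\mvt^1_1(-u)$. Together with~\eqref{eq:wk} this proves $-\mw^k_\lambda(\pm1)\simeq\mw^k_\lambda(\mp1)$.

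I do not expect a genuine obstacle: the lemma is essentially bookkeeping. The only points needing a little care are (i) making the reduction to the operator level precise, i.e.\ recording the naturality in $V$ of the constructions in Lemma~\ref{l:VtoHVS}, and (ii) keeping track of the ``up to positive real rescaling'' normalisations of the forms $b^k_\pm$ and of the entries of $V^{-1}$, so that the comparison of signs is really the whole argument.
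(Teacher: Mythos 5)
Your proof is correct and takes essentially the same route as the paper, whose entire justification is the remark that one checks the sign of the distinguishing coefficient (the anti-diagonal entry of $b$, resp.\ of $V^{-1}$ for the degenerate eigenvalue-one blocks) up to positive real rescaling --- exactly the sign bookkeeping you carry out via Lemmas~\ref{l:BB} and~\ref{l:mvtilde} after reducing to the level of the variation operator. Your explicit base change $\phi=\mathrm{diag}(-I_k,I_k)$ for the blocks $\mv^{2k}_\lambda$ is a harmless elaboration of the same point, which the paper leaves implicit since those blocks carry no sign datum.
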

\noindent One needs slightly more computations to verify:
\begin{lemma}\label{l:transpose}
Let  $V$ be the variation operator of the simple structure $\mv$. Let $\mv^T$ be the structure
determined by the variation operator  $V^T$.
Then $\mv^T$ and $\mv$ are isomorphic.
\end{lemma}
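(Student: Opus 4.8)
The plan is to reduce the statement to the list of indecomposable pieces given in Proposition~\ref{p:classification}. Since taking the transpose of $V$ is compatible with direct sums — if $V=V_1\oplus V_2$ then $V^T=V_1^T\oplus V_2^T$ — and since by the uniqueness part of Proposition~\ref{p:classification} an isomorphism $\mv^T\simeq\mv$ may be checked summand by summand, it suffices to prove the claim for each indecomposable structure $\mw^k_\lambda(\pm 1)$ with $\lambda\in S^1$ and for each $\mv^{2k}_\lambda$ with $0<|\lambda|<1$. In each case, by Lemma~\ref{l:VtoHVS} the isomorphism type of the simple HVS is determined by $V$ alone, so I only need to exhibit an invertible $\phi$ with $V^T=\phi^{-1}V(\bar\phi^*)^{-1}$, i.e. to recognize $V^T$ as sitting in the same isomorphism class as $V$; equivalently, to recognize which piece of the list $V^T$ represents and check it is the same one.

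The hyperbolic pieces $\mv^{2k}_\lambda$ are the easy case. There the variation operator is block-antidiagonal, so $V^T$ is again block-antidiagonal; conjugating by the permutation that swaps the two $\CC^k$-summands (and, if needed, reversing the order of basis vectors within each block via the anti-identity $w_k$, $(w_k)_{i,j}=\delta_{i,k+1-j}$, which satisfies $w_k J_k w_k = J_k^T$) carries $V^T$ back to $V$, using only that $(J_k)^T$ is conjugate to $J_k$. So the real content is the pieces $\mw^k_\lambda(\pm 1)$. For $\lambda\neq 1$ one has $\mw^k_\lambda(\pm 1)=\mv^k_\lambda(\pm 1)$ with $V=(\lambda J_k-I)(b^k_\pm)^{-1}$, and I would compute $V^T = (b^k_\pm)^{-T}(\lambda J_k^T-I)$; conjugating by $w_k$ and using $w_k J_k^T w_k=J_k$ together with the symmetry relations $\overline{(b^k_\pm)^*}=\varepsilon b^k_\pm$ and $J_k^* b^k_\pm J_k = b^k_\pm$ from Lemma~\ref{l:BB}, one sees that $V^T$ is again of the form $(\lambda J_k-I)(b')^{-1}$ for an $\varepsilon$-hermitian $b'$ with $J_k^*b'J_k=b'$; the only thing to verify is that the sign of $b'$, read off from the entry $(b')_{1,k}$ via the normalization in Lemma~\ref{l:BB}, equals $\pm$, i.e. is unchanged. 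For $\lambda=1$ one argues identically with $\mvt^k_1(\pm 1)$, now tracking the normalized entry $(V^{-1})_{i,k+1-i}=\pm(-1)^{i+1}i^{-n^2-k}$ from Lemma~\ref{l:mvtilde} rather than a coefficient of $b$; note $(V^T)^{-1}=(V^{-1})^T$, so the relevant anti-diagonal entries of $(V^{-1})^T$ are just $(V^{-1})_{k+1-i,i}=\pm(-1)^{k-i}i^{-n^2-k}$, and one checks that after conjugating by $w_k$ the resulting normalized anti-diagonal entry still carries the sign $\pm$ up to a positive real scalar.

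The main obstacle is purely bookkeeping: keeping the sign conventions straight. The factors $i^{-n^2-k+1}$ (resp. $i^{-n^2-k}$) and the alternating signs $(-1)^{i+1}$ along the anti-diagonal must be tracked carefully through the conjugation by $w_k$ and through the transpose, and one must confirm that the net effect is multiplication by a positive real number, so that the $\pm$ label is genuinely preserved and no unwanted swap $(+1)\leftrightarrow(-1)$ creeps in. This is the one place where a sign error would change the conclusion (it would be the difference between $\mv^T\simeq\mv$ and $\mv^T\simeq -\mv$, cf. Lemma~\ref{l:minus}); everything else — compatibility with $\oplus$, reduction to indecomposables, conjugacy of $J_k$ with $J_k^T$ — is formal.
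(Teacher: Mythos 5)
Your reduction to indecomposable summands and your treatment of the hyperbolic blocks $\mv^{2k}_\lambda$ are fine, but the central step for the pieces $\mw^k_\lambda(\pm1)$ contains a genuine gap, and the decisive computation is only announced, not performed. The claim that after conjugating by $w_k$ the operator $V^T$ ``is again of the form $(\lambda J_k-I)(b')^{-1}$'' is false for $k\ge 2$. Indeed, by Lemma~\ref{l:VtoHVS} the monodromy attached to the variation operator $V^T$ is $-\varepsilon V^T\overline{V}^{-1}=\overline{h}^{-1}$, which for the canonical piece ($h=\lambda J_k$, $\lambda\in S^1$) equals $\lambda J_k^{-1}$; after the base change by $w_k$ it becomes $\lambda J_k^{-T}\neq\lambda J_k$. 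Concretely, writing $b=b^k_\pm$ and using $J_k^*bJ_k=b$ one finds
\begin{equation*}
w_kV^Tw_k=(\lambda J_k^{-T}-I)\,\bigl(w_k\,b^T\,w_k\bigr)^{-1},
\end{equation*}
and since axiom (4) ($V'b'=h'-I$) together with the uniqueness in Lemma~\ref{l:VtoHVS} forces the monodromy of any structure presented as $(\lambda J_k-I)(b')^{-1}$ to be exactly $\lambda J_k$, no such presentation of $w_kV^Tw_k$ exists when $k\ge2$. Hence a further base change, bringing $\lambda J_k^{-T}$ back to Jordan normal form, is unavoidable, and it is precisely through that base change that the normalized entries of Lemmas~\ref{l:BB} and~\ref{l:mvtilde} have to be tracked. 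That tracking is exactly what you defer as ``bookkeeping'': but this is the entire content of the lemma, since a sign slip would yield $\mv^T\simeq-\mv$ (cf.\ Lemma~\ref{l:minus}) instead of $\mv^T\simeq\mv$. The same objection applies verbatim to your $\lambda=1$ case: reading the anti-diagonal of $w_k(V^{-1})^Tw_k$ identifies the isomorphism class only once the structure is actually in the normal form of Lemma~\ref{l:mvtilde}, which conjugation by $w_k$ alone does not achieve. (For $k=1$ your argument is complete, but the lemma concerns all $k$.)

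The paper's proof avoids exactly this obstruction by a preliminary base change $\phi=V^T$, under which the variation operator $V^T$ is carried to $\overline{V}^{-1}$; the monodromy of the structure determined by $\overline{V}^{-1}$ is $-\varepsilon\overline{V}^{-1}V^*=h^*=\lambda J_k^T$ \emph{on the nose}, so a single conjugation by the anti-diagonal involution $A=w_k$ brings the monodromy literally to $\lambda J_k$. One is then reduced to comparing the two variation operators $V$ and $A\overline{V}^{-1}A$, and by the normalizations in Lemma~\ref{l:BB} ($\lambda\neq1$) and in the last sentence of Lemma~\ref{l:mvtilde} ($\lambda=1$) this is a comparison of anti-diagonal entries up to a positive real factor, which is where the sign is actually checked. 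To repair your argument, either adopt this substitution of $V^T$ by $\overline{V}^{-1}$, or exhibit explicitly the extra base change taking $\lambda J_k^{-T}$ to $\lambda J_k$ and carry out the sign computation you postponed.
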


\begin{proof} The statement is clear for  $\mv=\mv^{2k}_\lambda$. Hence, assume that
$\mv=(U;b,h,V)$ is $\mw^k_\lambda(\pm 1)$. Consider  the new
structure with variation operator $W=V^T$.
Since for $\phi=V^T$ one has $\phi^{-1}V^T\overline{\phi}^{*,-1}=\overline{V}^{-1}$,
the variation structures associated with $W$ and $\overline{V}^{-1}$ are isomorphic.
The monodormy operator of $\overline{V}^{-1}$ is $-\varepsilon \overline{V}^{-1}V^*=
h^*=\lambda J_k^T$. Next, consider the anti-diagonal matrix $A$ with $A_{ij}=1$ if
$i+j=k+1$ and zero otherwise; it satisfies $A=A^{-1}=A^T=\overline{A}$. Base change by $A$ has the effect
$A(\lambda J_k^T)A=\lambda J_k$, hence the monodromy operator is twisted to a
`normal form', which agrees with $h$. In particular,
it is enough to compare the two
variation operators $V$ and $A\overline {V}^{-1}A$. If $\lambda=1$, by the last sentence of (\ref{l:mvtilde}),
it is enough to compare the anti-diagonals of these two operators, which clearly agree.
If $\lambda\not=1$ use (\ref{l:BB}) and the same type of  argument.
\end{proof}

\subsection{Spectrum and the extended spectrum}

One can extract from a variation structure a  weaker invariant, whose motivation will be
explained in the next subsection when we discuss the spectrum of an isolated hypersurface singularity.

\begin{definition}\label{d:mod2spec}
Let $\mv$ be a HVS. The \emph{$mod\,\,2$--spectrum} (or, shortly, the spectrum) of $\mv$ is
a finite set $Sp$ of real numbers from $(0,2]$ such that any real non-integer number $\alpha\in(0,2]$
occurs in $Sp$ precisely $s(\alpha)$ times, where
\[s(\alpha)=\sum_{\substack{k\text{ odd}\\u=\pm 1}}\frac{k-uv}{2}\cdot
p^{k}_\lambda(u)+\sum_{\substack{k\text{ even}\\u=\pm 1}}\frac{k}{2}\cdot p^k_\lambda(u),\]
where
\[e^{2\pi i\alpha}=\lambda\,\text{ and }(-1)^{\intpart{\alpha}}=v.\]
The H--numbers $p^k_1(\pm 1)$ correspond to elements $1$ and $2$ in the spectrum, appearing
precisely $k/2$ times each if $k$ is even, and $(k\pm 1)/2$ and $(k\mp 1)/2$ times if $k$ is odd.
\end{definition}

A consequence of Lemma~\ref{l:symmetry} is the following symmetry property
\begin{corollary}\label{l:symprop}
If $V$ is a real matrix, and $\epsilon =-1$,
then $Sp\setminus\mathbb{Z}$ is symmetric with respect to $1$.
\end{corollary}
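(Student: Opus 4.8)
The plan is to reduce the symmetry of $Sp\setminus\mathbb{Z}$ to the symmetry of the $H$--numbers already recorded in Lemma~\ref{l:symmetry}, applied in the case $\varepsilon=-1$, i.e., $n$ odd. Fix a non-integer $\alpha\in(0,2]$ and set $\beta=2-\alpha$, which is again a non-integer in $(0,2)$; the map $\alpha\mapsto\beta$ is the reflection about $1$. I must show that the multiplicity $s(\alpha)$ of $\alpha$ in $Sp$ equals the multiplicity $s(\beta)$ of $\beta$. First I would translate the relevant bookkeeping data: if $e^{2\pi i\alpha}=\lambda$ then $e^{2\pi i\beta}=\bar\lambda$, and if $v=(-1)^{\intpart{\alpha}}$ then $\intpart{\beta}=1-\intpart{\alpha}$, so the sign attached to $\beta$ is $-v$. (Here one uses $\alpha\in(0,1)\Leftrightarrow\beta\in(1,2)$, and the endpoint $\alpha=2$, $\beta=0$ does not arise since $0\notin(0,2]$; the value $\alpha=1$ is integral and excluded.)

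Next I would plug these into the formula for $s(\beta)$ and compare term by term. For $k$ even, $\frac{k}{2}p^k_{\bar\lambda}(u)$ appears; by Lemma~\ref{l:symmetry} with $\varepsilon=-1$ (so $n$ odd) we have $p^k_{\bar\lambda}(u)=p^k_\lambda((-1)^{n+k+1+s}u)$, and with $n$ odd and $k$ even, $n+k+1$ is even, so $(-1)^{n+k+1+s}=(-1)^s$. Hence if $\lambda\ne 1$ (so $s=0$) the even-$k$ contribution matches directly; if $\lambda=1$ this requires a separate look via the last clause of Definition~\ref{d:mod2spec}, but $\lambda=1$ forces $\alpha$ integral, so this case is vacuous for $Sp\setminus\mathbb{Z}$. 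For $k$ odd, the summand for $\beta$ is $\frac{k-u\cdot(-v)}{2}p^k_{\bar\lambda}(u)=\frac{k+uv}{2}p^k_{\bar\lambda}(u)$; again by Lemma~\ref{l:symmetry}, since $n+k+1$ is odd for $n$ odd and $k$ odd, $p^k_{\bar\lambda}(u)=p^k_\lambda(-u)$ (using $s=0$ as $\lambda\ne1$). Reindexing the sum over $u=\pm1$ by $u\mapsto -u$ turns $\frac{k+uv}{2}p^k_\lambda(-u)$ into $\frac{k-uv}{2}p^k_\lambda(u)$, which is exactly the odd-$k$ summand in $s(\alpha)$. Summing over all $k$ gives $s(\beta)=s(\alpha)$, which is the claim.

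The only genuinely delicate point is handling $\lambda\in\{1,-1\}$, i.e., the eigenvalues fixed by conjugation, where $\bar\lambda=\lambda$ and the reflection $\alpha\mapsto 2-\alpha$ could a priori behave differently. As noted, $\lambda=1$ corresponds to integral spectral values and is outside $Sp\setminus\mathbb{Z}$. For $\lambda=-1$ one has $\alpha\in\{1/2,3/2\}$, these two are swapped by the reflection, $s=0$, and the parity computation above ($n$ odd: sign flip $(-1)^{n+k+1}$ equals $+1$ for $k$ even and $-1$ for $k$ odd) still applies verbatim, so the matching goes through. Thus I expect the proof to be short: it is essentially the observation that the weights $\frac{k-uv}{2}$ (resp. $\frac{k}{2}$) are exchanged under $(v,u)\mapsto(-v,-u)$ precisely when the $H$--number symmetry of Lemma~\ref{l:symmetry} supplies the matching change $p^k_{\bar\lambda}(u)=p^k_\lambda(-u)$, and the hypothesis $\varepsilon=-1$ is exactly what makes these two sign rules compatible. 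The main obstacle, such as it is, is keeping the parities of $n+k+1+s$ straight and being careful that no spectral contribution is lost at $\lambda=\pm1$; no substantive new idea is needed beyond Lemma~\ref{l:symmetry}.
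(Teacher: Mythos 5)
Your proposal is correct and follows exactly the route the paper intends: the corollary is stated as an immediate consequence of Lemma~\ref{l:symmetry}, and your computation (comparing $s(\alpha)$ with $s(2-\alpha)$, tracking $\lambda\mapsto\bar\lambda$, $v\mapsto -v$, and the parity of $n+k+1+s$ with $n$ odd, then reindexing $u\mapsto -u$ in the odd-$k$ sum) is precisely the verification the paper leaves to the reader. Your handling of the special eigenvalues $\lambda=1$ (vacuous for $Sp\setminus\mathbb{Z}$) and $\lambda=-1$ (self-conjugate, values $1/2$ and $3/2$ swapped) is also correct.
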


Notice that $Sp$ contains no information regarding the blocks with eigenvalues $\lambda\not\in S^1$.
To enclose the information regarding $\{q^k_\lambda\}_{|\lambda|<1}$
we define the \emph{extended spectrum}.
Remark that, this construction has no counterpart in the classical Hodge theory.

\begin{definition}
The \emph{extended spectrum} $ESp$ of a HVS $\mv$ is a finite subset of complex numbers from $(0,2]\times i\mathbb{R}$
of the form $ESp=Sp\cup ISp$, where $ISp\cap\mathbb{R}=\emptyset$ and any non-real number $z\in (0,2]\times i\mathbb{R}$,
$z=x+iy$,
occurs in $ISp$ precisely $s(z)$ times, where
\[
s(z)=\begin{cases}
\sum k\cdot q^k_\lambda&\text{if $x\le 1$, $y>0$ and $e^{2\pi i z}=\lambda$}\\
\sum k\cdot q^k_\lambda&\text{if $x>1$, $y<0$ and $e^{2\pi i z}=1/\bar{\lambda}$}\\
0&\text{if $x\leq 1$ and $y<0$, or $x> 1$ and $y>0$}.
\end{cases}
\]
\end{definition}
In other words, a  block $\mv^{2k}_{\lambda}$ (where $|\lambda|<1$)
contributes $k$ times to both $x+iy$ and  $1+x-iy$, if
$e^{2\pi (-y+ix)}=\lambda$ and $x\in(0,1]$.

We have the following two important properties of $ESp$.
\begin{lemma}\label{l:ISp-nosig}
For any $u\in(0,1)$, let $H_u=(u,u+1)\times i\mathbb{R}$. Then, if $ISp\cap\partial H_u=\emptyset$,
we have
\[\#ISp\cap H_u=\#ISp\setminus H_u.\]
\end{lemma}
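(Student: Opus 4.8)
The plan is to understand $ISp$ purely combinatorially via the contributions of the blocks $\mv^{2k}_\lambda$ with $0<|\lambda|<1$, and to exploit the built-in reflection symmetry $z\mapsto 1+\bar z$ (more precisely $x+iy\mapsto (1+x)-iy$) present in the definition. Recall from the remark following the definition of $ESp$ that a single block $\mv^{2k}_\lambda$ with $|\lambda|<1$ contributes, with multiplicity $k$, exactly to the pair of points $z_0=x+iy$ and $z_1=(1+x)-iy$, where $x\in(0,1]$ and $e^{2\pi(-y+ix)}=\lambda$. Since $|\lambda|<1$ forces $y>0$, we have $z_0\in(0,1]\times i(0,\infty)$ and $z_1\in(1,2]\times i(-\infty,0)$; in particular the two points of a pair are always distinct, and neither is real. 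Hence $ISp$ is a disjoint union, over all blocks $\mv^{2k}_\lambda$ in the decomposition~\eqref{eq:mvsum}, of "dumbbells" $\{z_0,z_1\}$, each carrying multiplicity $k$, and $\#ISp=2\sum k\cdot q^k_\lambda$ counted with multiplicity.

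First I would fix $u\in(0,1)$ with $ISp\cap\partial H_u=\emptyset$ and show that each dumbbell $\{z_0,z_1\}$ places exactly one of its two points inside $H_u=(u,u+1)\times i\mathbb{R}$ and the other outside. Write $x=\re z_0\in(0,1]$, so $\re z_1=1+x\in(1,2]$. The condition $z_0\in H_u$ is $u<x<u+1$, i.e. (since $x\le 1\le u+1$) simply $x>u$; the condition $z_1\in H_u$ is $u<1+x<u+1$, i.e. $x<u$. The hypothesis $ISp\cap\partial H_u=\emptyset$ rules out the boundary cases $x=u$ and $1+x=u$ (the latter is impossible anyway since $1+x>1>u$) and also $x=u+1$ (impossible since $x\le 1<u+1$); so exactly one of $x>u$, $x<u$ holds, and correspondingly exactly one of $z_0,z_1$ lies in $H_u$. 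Thus each dumbbell contributes $k$ to $\#(ISp\cap H_u)$ and $k$ to $\#(ISp\setminus H_u)$. Summing over all blocks gives $\#(ISp\cap H_u)=\sum k\cdot q^k_\lambda=\#(ISp\setminus H_u)$, which is the claim.

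The only genuine content is the bookkeeping above; I expect the main (minor) obstacle to be making the real-part inequalities airtight at the endpoints $x=1$ and $x=u$, i.e. checking that the strip $H_u$ is half-open/half-closed in the right way so that "$x\le 1$" combined with "$ISp\cap\partial H_u=\emptyset$" really forces a strict dichotomy, with no dumbbell having both points in $H_u$ or both outside. Once one observes that $\re z_0\le 1$ and $\re z_1>1$ automatically separate the two points relative to the vertical line $\re z=u+1$ on one side and $\re z=u$ on the other, the argument closes. No facts beyond the definition of $ESp$ and the remark interpreting the blocks $\mv^{2k}_\lambda$ are needed; in particular neither the classification (Proposition~\ref{p:classification}) beyond the shape of~\eqref{eq:mvsum}, nor the reality of $V$, plays any role here.
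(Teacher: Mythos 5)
Your proof is correct and follows essentially the same route as the paper: the paper's (one-line) argument is precisely the observation that $ISp$ consists of pairs $\{x+iy,\,1+x-iy\}$ coming from the blocks $\mv^{2k}_\lambda$ with $|\lambda|<1$, exactly one member of each pair lying in $H_u$ once boundary cases are excluded. You simply spell out the real-part bookkeeping that the paper leaves implicit, so there is nothing to correct.
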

\begin{proof}
This follows directly from a simple observation that from the two numbers $x+iy$ and $1+x-iy$,
one of them lies in $H_u$, and one of them does not.
\end{proof}
\begin{lemma}\label{l:symprop2}
If $\mv$ is real  and $\varepsilon=-1$,
then $ESp\setminus\mathbb{Z}$ is symmetric (via point--reflection) with respect to $1$.
\end{lemma}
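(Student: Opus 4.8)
\textbf{Proof proposal for Lemma~\ref{l:symprop2}.}

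The plan is to reduce the statement to the already-established symmetries, namely Lemma~\ref{l:symmetry} for the H-numbers $q^k_\lambda$ together with the description of how a block $\mv^{2k}_\lambda$ contributes to $ISp$, and Corollary~\ref{l:symprop} for the part $Sp\setminus\mathbb{Z}$. Since $ESp\setminus\mathbb{Z}=(Sp\setminus\mathbb{Z})\cup ISp$ and $ISp\cap\mathbb{R}=\emptyset$, it suffices to treat the two pieces separately: the real non-integer part is handled by Corollary~\ref{l:symprop}, which already asserts symmetry with respect to $1$ on the real line (this is the $y=0$ case of point-reflection about $1$), so the real work is to verify that $ISp$ is invariant under the point-reflection $z=x+iy\mapsto 2-\bar z=(2-x)+iy$ about the point $1$ — note this is reflection through the \emph{real} point $1$, which in complex coordinates sends $x+iy$ to $(2-x)+iy$.

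First I would recall, from the sentence following the definition of $ESp$, that a single block $\mv^{2k}_\lambda$ with $|\lambda|<1$ contributes $k$ to the multiplicity of the pair of conjugate-type points $x+iy$ and $(1+x)-iy$, where $x\in(0,1]$ and $e^{2\pi(-y+ix)}=\lambda$. Thus $ISp$ is, by construction, a union over blocks of such symmetric pairs $\{x+iy,\ (1+x)-iy\}$; these two points are already exchanged by reflection about $1$, since $2-(x+iy)$ has the form $(2-x)-iy$ which, when $x\in(0,1]$, equals $(1+x')-iy$ with $x'=1-x\in[0,1)$ — I would need a small bookkeeping check here that the reflection of the pair coming from a given block is again a pair of the same shape, possibly coming from the conjugate block. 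This is where Lemma~\ref{l:symmetry} enters: reality of $V$ gives $q^k_\lambda=q^k_{\bar\lambda}$, so whenever a block $\mv^{2k}_\lambda$ occurs, the block $\mv^{2k}_{\bar\lambda}$ occurs with the same multiplicity, and one checks that the $ISp$-contribution of $\mv^{2k}_{\bar\lambda}$ is precisely the point-reflection about $1$ of the $ISp$-contribution of $\mv^{2k}_\lambda$. Summing over all blocks then yields that $ISp$ as a multiset is fixed by reflection about $1$.

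Concretely, the computation I expect to carry out is: if $\mv^{2k}_\lambda$ contributes to $x+iy$ with $e^{2\pi(-y+ix)}=\lambda$ and $x\in(0,1]$, then $\mv^{2k}_{\bar\lambda}$ contributes to some $x'+iy'$ with $e^{2\pi(-y'+ix')}=\bar\lambda=e^{-2\pi(-y+ix)}=e^{2\pi(y-ix)}$; reading off, $-y'=y$ and $x'\equiv -x \pmod 1$, so $y'=-y$ and $x'=1-x\in[0,1)$ (adjusting the representative into $(0,1]$). Then the two $ISp$-points of $\mv^{2k}_{\bar\lambda}$ are $x'+iy'=(1-x)-iy$ and $(1+x')-iy'=(2-x)+iy$, which are exactly $2-\overline{(1+x)-iy}$ and $2-\overline{x+iy}$, i.e. the reflections about $1$ of the two $ISp$-points of $\mv^{2k}_\lambda$. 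I do not expect a genuine obstacle here; the only care needed is the boundary case $x=1$ (eigenvalue with $\mathrm{Re}$ on the line $\mathrm{Re}\,z=1$, coming from $\lambda=\bar\lambda$ real negative), where the block is self-conjugate and its $ISp$-pair $\{1+iy,\ 1-iy\}$ is visibly reflection-symmetric about $1$, and the normalization choices in the definition of $ISp$ for the two half-planes $y>0$, $y<0$ match up — that bookkeeping is the main (minor) thing to get right.
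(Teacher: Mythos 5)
Your overall strategy is the same as the paper's: dispose of $Sp\setminus\mathbb{Z}$ by Corollary~\ref{l:symprop}, then show that $ISp$ is symmetric by pairing the contributions of $\mv^{2k}_\lambda$ and $\mv^{2k}_{\bar\lambda}$, which occur with equal multiplicity by Lemma~\ref{l:symmetry}. However, the execution contains a genuine error. First, the point reflection with respect to $1$ is $z\mapsto 2-z$, i.e.\ $x+iy\mapsto (2-x)-iy$; the map you use, $z\mapsto 2-\bar z$, i.e.\ $x+iy\mapsto (2-x)+iy$, is the mirror reflection in the vertical line $\re z=1$, a different involution. Second, your key computation rests on the identity $\bar\lambda=e^{-2\pi(-y+ix)}$, which is false: that expression is $1/\lambda$, whereas $\bar\lambda=e^{2\pi(-y-ix)}$. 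Consequently $y'=y$, not $-y$ (indeed $y'=-y<0$ would give $|\bar\lambda|=e^{2\pi y}>1$, contradicting $|\bar\lambda|=|\lambda|<1$), and $x'=1-x$, so the $ISp$--points contributed by $\mv^{2k}_{\bar\lambda}$ are $(1-x)+iy$ and $(2-x)-iy$, not $(1-x)-iy$ and $(2-x)+iy$ as you claim. The correct points are precisely the images of $x+iy$ and $(1+x)-iy$ under the genuine point reflection $z\mapsto 2-z$, which is exactly how the paper concludes.

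The two slips compensate each other, so your argument looks internally consistent, but the statement you actually argue for --- invariance of $ISp$ under $z\mapsto 2-\bar z$ --- is false in general: for $\mv=\mv^{2}_\lambda\oplus\mv^{2}_{\bar\lambda}$ with $\lambda$ non-real, $|\lambda|<1$, one has $ISp=\{x+iy,\,(1+x)-iy,\,(1-x)+iy,\,(2-x)-iy\}$, and the image of $x+iy$ under $z\mapsto 2-\bar z$ is $(2-x)+iy$, which does not belong to this set (its points with imaginary part $+y$ have real parts $x$ and $1-x$). Your boundary discussion is also off: $x=1$ corresponds to $\lambda$ real \emph{positive} (negative real $\lambda$ gives $x=1/2$), and its $ISp$--pair is $\{1+iy,\,2-iy\}$, not $\{1+iy,\,1-iy\}$. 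Once you correct the conjugation (so $y'=y$, $x'=1-x$) and take the symmetry to be $z\mapsto 2-z$, your pairing argument coincides with the paper's proof.
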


\begin{proof}
By Corollary~\ref{l:symprop}, it is enough to prove that $ISp$ is symmetric. But this follows from the fact that
$\lambda=e^{2\pi (-y+ix)}$ yields the  points $x+iy$ and $1+x-iy$ in $ISp$, while
$\bar{\lambda}=e^{2\pi (-y-ix)}$ the points $1-x+iy$ and $2-x-iy$.
\end{proof}

\subsection{Variations structures of Milnor fibers}\label{Milnor}
The motivation of the definition of HVS comes from the topological invariants of
complex isolated hypersurface singularities
and their relationship with the mixed Hodge structure on the vanishing cohomology.

Let $f:(\mathbb{C}^{n+1},0)\to(\mathbb{C},0)$ ($n\geq 0$)
be an analytic germ such that $f^{-1}(0)$ has an  isolated singularity at the origin.
Let $S^{2n+1}$ be a small sphere around $0$, $K=S^{2n-1}\cap\{f=0\}$ the link, and
\[\phi\colon S^{2n+1}\setminus K\to S^1, \ \ \ \ \phi(z)=f(z)/|f(z)|\]
the Milnor fibration  (see \cite{Mil-singular}) with
fiber $F=\phi^{-1}(1)$.
Set $F_t=\phi^{-1}(e^{2\pi i t})$ for $t\in [0,1)$ (with $F=F_0$).
Then the trivialization of the bundle over $[0,1)$ gives diffeomorphisms
(defined up to isotopy)  $\gamma_t\colon F\to F_t$
for $t\in [0,1)$, and extended to $t=1$, the geometric monodromy
$\gamma_1:F\to F$. They give rise to a well-defined map
\[\Gamma_t\colon \tilde{H}_n(F_1)\to \tilde{H}_n(F_t)\]
and the \emph{monodromy map}
\[h=\Gamma_1\colon \tilde{H}_n(F)\to \tilde{H}_n(F).\]

One also defines the intersection form on $b:\tilde{H}_n(F)^{\otimes 2}\to {\mathbb R}$
which is $(-1)^n$ symmetric.
Since  $\gamma_1$ is chosen such that it is identity on $\partial F$, one
also defines a \emph{variation} map $V\colon \tilde{H}_n(F,\partial F)\to \tilde{H}_n(F)$
(see \cite[Chapter 4.2]{Zo} or \cite[Chapter 1.2]{AVG}).
Here, by  Lefschetz duality one has the identification  $\tilde{H}_n(F,\partial F)\simeq Hom(\tilde{H}_n(F),{\mathbb R})$.
The  next fact  is well--known (see e.g. \cite{Nem-real}):
\begin{proposition}
The quadruple $(U=\tilde{H}_n(F,\mathbb{C})$, $b,h,V)$ 
form a HVS with $\varepsilon=(-1)^n$.
\end{proposition}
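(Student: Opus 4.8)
The plan is to verify the four axioms of Definition~\ref{d:HVS} directly for the quadruple $(\tilde H_n(F,\mathbb{C}),b,h,V)$, taking $\varepsilon=(-1)^n$. Point (1) is immediate since $\tilde H_n(F,\mathbb{C})$ is finite dimensional. For point (2), the intersection form on $\tilde H_n(F)$ is $(-1)^n$-symmetric by the standard symmetry of the cup/intersection pairing on an $n$-dimensional manifold-with-boundary (Poincaré--Lefschetz duality together with the graded commutativity of the cup product), and after complexification it becomes the required $\varepsilon$-hermitian form $b\colon U\to U^*$ with $\overline{b^*\circ\theta}=\varepsilon b$; here one uses that $b$ was real to begin with so the bar only reflects how we package a symmetric bilinear form as a hermitian one. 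For point (3), the monodromy $h$ preserves the intersection form because the geometric monodromy $\gamma_1$ is a diffeomorphism of $F$ that is the identity near $\partial F$, hence acts as an isometry of the intersection pairing; this is the classical statement that the Milnor monodromy is an isometry, and it gives $\bar h^*\circ b\circ h=b$.

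The substance is in point (4), the two compatibility relations between $V$, $b$, and $h$. First I would recall the geometric definition of the variation map: for a relative cycle $c\in\tilde H_n(F,\partial F)$, the chain $\gamma_1(c)-c$ is supported away from $\partial F$ (since $\gamma_1=\mathrm{id}$ there), hence defines an absolute class $V(c)\in\tilde H_n(F)$, and one checks this is well defined on homology. The relation $V\circ b=h-I$ then follows by unwinding the duality identification $\tilde H_n(F,\partial F)\simeq \mathrm{Hom}(\tilde H_n(F),\mathbb{R})$: under this identification, the composition $\tilde H_n(F)\xrightarrow{b}\tilde H_n(F,\partial F)$ sends an absolute class to its image under the natural map $\tilde H_n(F)\to\tilde H_n(F,\partial F)$, and then $V$ applied to that is exactly $h-I$ on $\tilde H_n(F)$ — this is precisely the statement that $b$ is, up to the duality identification, the natural map $j_*\colon H_n(F)\to H_n(F,\partial F)$, and that $V\circ j_*=\gamma_{1*}-\mathrm{id}$. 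The second relation of (4), $\overline{\theta^{-1}\circ V^*}=-\varepsilon V\circ\bar h^*$, encodes the compatibility of the variation map with the intersection pairing on the pair $(F,\partial F)$; it is most cleanly obtained from the formula $(V(a),b)_F + (-1)^n(a,V(b))_{F,\partial F}$-type identity, i.e. the variation map interchanges, up to sign, the absolute pairing and the relative pairing, with the sign being exactly $-\varepsilon$, and the $\bar h^*$ appearing because one must transport one of the relative classes back by the monodromy. I would cite \cite[Chapter 4.2]{Zo} and \cite[Chapter 1.2]{AVG} for the geometric construction, and \cite{Nem-real} for the sign bookkeeping, and present the computation as the identity of bilinear forms it really is.

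The main obstacle, as usual in this circle of ideas, is getting the signs right in the second relation of axiom~(4) and confirming that the sign is $-\varepsilon=-(-1)^n$ and not $+\varepsilon$; this depends on orientation conventions for $F$, for $\partial F$, and for the Lefschetz duality isomorphism, and on how one chooses the direction of the monodromy (clockwise versus counterclockwise around the origin). The honest way to do this is to fix once and for all the conventions matching \cite{Nem-real}, express everything in terms of the real Seifert form $L$ of the link (for which $V=L^{-1}$ up to transpose and $h=L^{-1}L^T$, $b=L+(-1)^nL^T$ in suitable conventions), and then the relations of Definition~\ref{d:HVS} become elementary linear-algebra identities among $L$, $L^T$, and their inverses — at which point one invokes Lemma~\ref{l:VtoHVS} to recognize that a simple HVS is already determined by $V$ alone, so it suffices to check that $V$ is an isomorphism (equivalently, that the non-degenerate part of the Seifert form is being used) and that the induced $b$, $h$ agree with the intersection form and monodromy. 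In short: reduce to linear algebra on the Seifert matrix, cite the geometric inputs, and do the sign check carefully.
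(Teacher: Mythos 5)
Your proposal is essentially correct, and it is worth saying up front that the paper itself offers no proof of this proposition: it labels the fact well--known and simply cites \cite{Nem-real} (with the geometric construction of $V$ referred to \cite{Zo} and \cite{AVG}). So your sketch is filling in exactly what the paper delegates to the literature, and the route you take --- verify (1)--(3) geometrically, obtain $V\circ b=h-I$ from $V\circ j_*=\gamma_{1*}-\mathrm{id}$ under the Lefschetz duality identification of $j_*$ with $b$, and settle the remaining sign-sensitive relation of axiom (4) by rewriting everything in terms of the real Seifert form --- is the standard verification and matches the cited sources.

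One point to tighten: your shortcut through Lemma~\ref{l:VtoHVS} (``$b$ and $h$ are determined by $V$, so it suffices to check $V$ is an isomorphism and that the induced $b,h$ are the geometric ones'') silently uses that the variation operator of the Milnor fibration of an \emph{isolated} singularity is an isomorphism, equivalently that the integral Seifert form is unimodular. That is true, but it is a genuine theorem and should be quoted as an input; your parenthetical ``equivalently, that the non-degenerate part of the Seifert form is being used'' imports language from the link setting of Section 3, where a degenerate block $S_0$ may occur --- in the Milnor-fibration setting of this proposition there is no such block to discard. Alternatively, note that the direct geometric check of both identities in axiom (4) (via the classical relations $b=S+(-1)^nS^T$ and $h=(-1)^{n+1}S^{-1}S^T$ in the conventions of \cite{Nem-real}, with $S=V^{-1}$ up to transpose) does not need invertibility of anything beyond $S$ itself, and is the cleaner way to phrase the reduction you intend.
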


\begin{definition}\label{d:mvf}
The variation  structure defined above is called the \emph{variation structure of the singularity $f$} and
it is denoted by $\mv_{f}$.
\end{definition}

Notice that $\mv_f$ is defined over ${\mathbb R}$. Additionally,  it has some other particular properties as well. First of all, by the Monodromy Theorem (see e.g. \cite[Theorems 3.11 and 3.12]{AVG} or \cite[Chapter 7, \S 4]{Zo}), all the eigenvalues of $h$ are roots of unity.
Moreover,  the block--decomposition of $\mv_f$  is  closely related with the
mixed Hodge structure of $U$.

Recall (see e.g. \cite{Nem-real} for the facts below) that $U$ carries a mixed Hodge structure
compatible with the monodromy action.
Let us denote the corresponding equivariant Hodge numbers by $h^{a,b}_\lambda$.  The nilpotent part of the monodromy  defines a morphism of Hodge structures of type $(-1,-1)$, let us denote by $p^{a,b}_\lambda$ the dimensions of the corresponding primitive $\lambda$--generalized 
eigenspaces, which are, in general,  non--trivial for $ a+b\geq n+s$. 
Then
$$p^{a,b}_\lambda=h^{a,b}_\lambda-h^{a+1,b+1}_\lambda \ \ \mbox{and} \ \ \
h^{a,b}_\lambda=\sum_{l\geq 0}p^{a+l,b+l}_\lambda$$
for any $a+b\geq n+s$. Moreover, since
$h^{a,b}_\lambda=h^{n+s-a,n+s-b}_{\bar{\lambda}}$, the system of Hodge numbers
$\{h^{a,b}_\lambda\}_{a,b}$ is equivalent with the system of primitive Hodge numbers
$\{p^{a,b}_\lambda\}_{a+b\geq n+s}$.

The point is that by \cite[Theorem~6.1]{Nem-real} one has the following isomorphism of variation structures:
\[\mv_f \simeq\bigoplus_\lambda \bigoplus_{2n\geq a+b\geq n+s}\ p_{\lambda}^{a,b} \cdot\mw^{a+b+1-n-s}_\lambda ((-1)^b).\]
In particular, for any $k\geq 1$ and $u=\pm 1$ one has
\begin{equation}
p_\lambda^k(u)=\sum _{\substack{ a+b=k+n+s-1 \\ (-1)^b=u}}\, p_\lambda ^{a,b}.
\end{equation}
This  fact motivates to call the numbers $p^k_\lambda(u)$ the {\it mod--2 primitive Hodge numbers }
of $f$, or, of the corresponding variation structure.

This relation with Hodge theory can be continued. Recall that for any $f$ as above one extract from the equivariant
Hodge numbers the spectrum. Now,
if $\mv$ is a variation structure associated to an isolated hypersurface singularity,
then $Sp$ (defined in (\ref{d:mod2spec})) is the spectrum of the singularity
reduced modulo 2, i.e. if $\alpha$ belongs to the spectrum, then $\alpha\in (0,2]$ (mod 2) belongs to $Sp$. In the case of isolated curve
singularities $Sp$ is just the spectrum of the singular germ.

Obviously, in general,
$Sp$ does not always contain enough information to recover $\mv$. However,
if all the monodromy eigenvalues are different, then the dimension of all Jordan blocks is one,
 and $\mv$ is determined by $Sp$. This simple case contains for example all
spectra of cuspidal plane curve singularities.

\vspace{2mm}

At the end of this subsection we recall the connection of the variation structures with the Seifert form.

\begin{definition}\label{d:Seifert}
Let us be given two cycles $\alpha,\beta\in \tilde{H}_n(F)$. The \emph{Seifert form} of the Milnor fibration is defined to be
\begin{equation}\label{eq:LSeif}
S(\alpha,\beta)=L(\alpha,\Gamma_{1/2}\beta),
\end{equation}
where $L$ is the linking number of two $n$-dimensional cycles in $S^{2n+1}$.
\end{definition}
There is a standard fact that $S(\alpha,\beta)=\langle V^{-1}(\alpha),\beta\rangle$, where $\langle\cdot,\cdot\rangle$
is the Lefschetz pairing $\tilde{H}_{n}(F,\partial F)\times \tilde{H}_{n}(F)\to\mathbb{R}$ .
In particular, in matrix notations, the  variation operator is the transposed inverse of the
Seifert form.

\section{H--numbers for links}
\subsection{Definitions and first properties}
Let us consider $S^3$ with its standard orientation, and
let $L\in S^3$ be an {\it oriented  link}. Let $S$ be its (integral) Seifert form.
 By our convention, the Seifert form is  $S(\alpha,\beta)=L(\alpha,\beta^+)$,
 where $\alpha,\beta$ are cycles on the Seifert surface
and $\beta^+$ is the push-forward of $\beta$ in the positive direction.
This is the convention adopted by e.g. \cite{BZ,Liv,Mur-book}. Some
authors like \cite{Kau,Kaw-book} define $S(\alpha,\beta)$ as $L(\alpha^+,\beta)$.
This amounts to transposition of $S$.

Recall, that two matrices $S$ and $S'$ are \emph{congruent} if there exists an invertible matrix $A$ such that $S'=ASA^T$.

The next results were proved in \cite{keef} as  a  generalization of Theorem~12.2.9 of \cite{Kaw-book} (we would like
to thank H.~Trotter for drawing our attention to Keef's paper):

\begin{proposition}\label{prop:split}
(a) \cite[Proposition~3.1]{keef} Let $S:V\times V\to\mathbb{R}$ be a Seifert form of a link.
Then, either $S$ is real S--equivalent to the empty matrix or is real S-equivalent to $\bp S_0&0\\ 0& S_\nd\ep$, where
$\det S_\nd\neq 0$ and $S_0$ is a zero matrix.

(b) \cite[Theorem~3.5]{keef} Let us be given two matrices $S$ and $T$, which are $S$ equivalent.
Assume that they are of the form $S=\bp S_0&0\\ 0&S_\nd\ep$, $T=\bp T_0&0\\ 0&T_\nd\ep$.
Then $S_{\nd}$ and $T_\nd$ are congruent and $\dim S_0=\dim T_0$
\end{proposition}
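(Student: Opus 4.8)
\textit{Proof plan for Proposition~\ref{prop:split}.}

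Since the statement cites Keef's paper for both parts, I would treat this as a matter of recalling (or reconstructing) the argument for S-equivalence of Seifert matrices in the possibly degenerate case. The plan is to work with the classical moves generating real S-equivalence: congruence $S \mapsto ASA^T$ with $A \in GL(\mathbb{R})$, together with the enlargement/reduction moves $S \mapsto \bp S & \xi & 0 \\ 0 & 0 & 1 \\ 0 & 0 & 0 \ep$ and $S \mapsto \bp S & 0 & 0 \\ \eta & 0 & 0 \\ 0 & 1 & 0 \ep$ (and their inverses), where $\xi$ is an arbitrary column and $\eta$ an arbitrary row. The key algebraic object is the \emph{radical} $R_S = \{ v : (S+S^T)v = 0 \text{ and } v^T(S+S^T) = 0\}$, or more precisely the left and right radicals of $S$ itself; one checks that $\dim S_0$ should be read off as the dimension of the radical of $S + S^T$ (the radical of the symmetrized form), which is an invariant of real S-equivalence because the enlargement moves add a hyperbolic summand to $S+S^T$, leaving its radical unchanged up to the obvious identification.

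For part (a): starting from an arbitrary real Seifert matrix $S$, I would first diagonalize enough to split off the radical of the symmetrized form $S + S^T$. Concretely, pick a complement to $\ker(S+S^T)$ and change basis by a congruence so that in block form $S = \bp S_0' & B \\ C & S_\nd \ep$ where $S_\nd + S_\nd^T$ is nonsingular, $S_0' + S_0'^T = 0$, and we still need to kill $B$, $C$ and $S_0'$ itself. The entries $B, C$ can be absorbed by further congruence using invertibility of $S_\nd$ (a Schur-complement style reduction: replace the radical-block basis vectors by suitable combinations with the $S_\nd$-block vectors). Finally, the surviving skew-symmetric block $S_0'$ on the radical can be removed using the enlargement/reduction moves: a skew-symmetric matrix over $\mathbb{R}$ is congruent to a direct sum of $2\times 2$ blocks $\bp 0 & 1 \\ -1 & 0 \ep$ and zeros, and each such hyperbolic $2\times2$ block sitting on the radical is exactly of the type that a reduction move eliminates, at the cost of also removing two dimensions; the net effect leaves a zero matrix $S_0$ on the radical. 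I would be slightly careful that the reduction move applies: one has to verify the required column/row-of-zeros pattern is present, which holds precisely because the block lies in the radical of $S+S^T$. This gives the normal form $\bp S_0 & 0 \\ 0 & S_\nd \ep$ with $\det S_\nd \neq 0$, and when the radical is everything we land on (essentially) the empty nondegenerate part.

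For part (b): suppose $S = \bp S_0 & 0 \\ 0 & S_\nd \ep$ and $T = \bp T_0 & 0 \\ 0 & T_\nd \ep$ are real S-equivalent, both already in the split form. First, $\dim S_0 = \dim T_0$ because, as noted, $\dim S_0 = \dim \ker(S + S^T)$ and the latter is an S-equivalence invariant (the enlargement moves change $S+S^T$ by orthogonal sum with a nondegenerate hyperbolic piece, hence do not change the radical dimension). Then one shows $S_\nd$ and $T_\nd$ are congruent: the chain of elementary moves connecting $S$ to $T$ can be analyzed modulo the radical. Since $S+S^T$ restricted to a complement of its radical is a nondegenerate form whose congruence class (equivalently, the congruence class of the \emph{nondegenerate} part together with its monodromy, i.e. the data of $S_\nd$ up to congruence) is preserved — this is where I would invoke the classical fact (Trotter, Murasugi) that on the nondegenerate part, S-equivalence reduces to congruence, because the only freedom S-equivalence adds beyond congruence is the hyperbolic enlargement, which is invisible once we restrict to the nondegenerate summand and quotient by the radical. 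Concretely: apply the hypothesized congruence $A$ and moves to $S$; track how $A$ acts on the quotient by the radical; the enlargement moves become trivial there; what remains is a congruence between $S_\nd$ and $T_\nd$.

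\emph{Main obstacle.} The delicate point is the bookkeeping in part (b): the matrix $A$ realizing the S-equivalence need not respect the block decomposition, so one cannot directly restrict $A$ to the nondegenerate block. The fix is to pass to the symmetrized forms, use that $\ker(S+S^T)$ is canonically determined, and show the induced map on $(\mathbb{R}^N/\ker) $ carries $S+S^T$ to $T+T^T$ and is compatible with the (well-defined, since invertible there) monodromy $h = S^{-T}S$ on the nondegenerate quotient — so that one recovers a genuine isomorphism of the \emph{nondegenerate triples} $(V_\nd; S_\nd + S_\nd^T, h_\nd)$, which by Milnor's classification (cited before Lemma~\ref{l:VtoHVS}) is the same as congruence of $S_\nd$ and $T_\nd$. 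Making this quotient argument precise, rather than hand-waving "modulo the radical", is the crux; everything else is routine linear algebra with the two elementary moves.
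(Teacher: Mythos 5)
The paper itself gives no argument here (both parts are quoted from Keef), so the only question is whether your reconstruction is sound, and it is not: the load-bearing identification in your plan, $\dim S_0=\dim\ker(S+S^T)$, is false. The nullity of $S+S^T$ is indeed an S-equivalence invariant (your observation that an enlargement adds a nondegenerate hyperbolic summand to the symmetrization is correct), but it equals $\dim S_0$ \emph{plus} the nullity of $S_\nd+S_\nd^T$, and the latter is nonzero exactly when $-1$ is an eigenvalue of the monodromy $(S_\nd^{T})^{-1}S_\nd$; compare the paper's own Lemma~\ref{l:null}, which at $\zeta=-1$ reads $\dim\ker(S+S^T)=\sum_{k,u}p^k_{-1}(u)+\dim S_0$. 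A minimal counterexample is $S_\nd=\left(\begin{smallmatrix}0&1\\-1&0\end{smallmatrix}\right)$: nonsingular, yet with zero symmetrization; and such eigenvalue $-1$ blocks do occur for honest links (e.g.\ the $(4,4)$--torus link is fibered, so its Seifert matrix is nondegenerate and $S_0$ is empty, while $\Delta(-1)=0$, so $S+S^T$ is singular). This invalidates your dimension argument in (b), and it breaks (a) at the last step: after splitting along $\ker(S+S^T)$, the surviving skew block can contain summands congruent to $\left(\begin{smallmatrix}0&1\\-1&0\end{smallmatrix}\right)$, and these cannot be removed by a reduction move --- the move requires the $\left(\begin{smallmatrix}0&1\\0&0\end{smallmatrix}\right)$ pattern together with an entirely zero row and column of $S$, and eliminating such a block would divide $\det(S-tS^T)$ by $(1+t)^2$, contradicting the invariance of the Alexander polynomial up to units $\pm t^k$ under S-equivalence. (Your Schur-complement step also quietly assumes the lower-right block of $S$ itself is invertible, which you only know for its symmetrization, and killing both off-diagonal blocks by a single congruence requires a compatibility condition, not just invertibility.)

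The degenerate part must be split off along $\ker S$ itself, not $\ker(S+S^T)$: if $\det S=0$, a congruence makes the last column of $S$ zero; if the last row then also vanishes, a $1\times1$ zero summand splits off, and otherwise a further congruence (using the zero last column to clear the entries above the distinguished $1$) brings $S$ into enlarged form, so a reduction move drops the size by two; induction yields (a). The invariant recovering $\dim S_0$ in (b) is, for instance, the corank of $S-tS^T$ over the field $\mathbb{R}(t)$ (equivalently, the free rank of the $\mathbb{R}[t,t^{-1}]$-module it presents), which is preserved by both congruence and enlargement. Finally, your appeal to Milnor's classification of triples cannot finish (b) either: the pair $(S+S^T,h)$ does not determine $S$ up to congruence when $-1\in\operatorname{spec}(h)$, nor does $(S-S^T,h)$ when $1\in\operatorname{spec}(h)$ --- this is precisely why the present paper works with variation structures rather than triples --- so the congruence of the nondegenerate parts genuinely needs a Trotter/Keef-type argument on nonsingular matrices rather than a reduction to the classification of nondegenerate isometric structures.
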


Let us define
\[V:=(S_\nd^T)^{-1}.\]
and take the associated HVS with $\varepsilon=-1$. Its parts are the following:
 $U=\mathbb{C}^m$, where $m:=rank(V)$, $b=S_\nd-{S_\nd^T}$, and
 $h=(S_\nd^{T})^{-1}\cdot S_\nd$.

Observe that taking a conjugate  of the Seifert matrix results in an isomorphism of HVS. Hence, 
the whole structure is independent (up to an isomorphism) of the specific choice of the Seifert matrix. Hence it
is a link invariant.

\begin{definition}\label{d:mvL}
The variation structure $(U;b,h,V)$ defined above is called the \emph{variation structure} of the link $L$ and
is denoted by $\mv_L$.
\end{definition}

According to Definition~\ref{d:pklambda}, we can define the numbers
$\{q_\lambda^{k}\}_{|\lambda|<1}$ and $\{p^k_\lambda(\pm 1)\}_{\lambda\in S^1}$
of the corresponding HVS.

\begin{definition}\label{d:phn}
The numbers $\{q_\lambda^{k}\}_{|\lambda|<1}$ and $\{p^k_\lambda(\pm 1)\}_{\lambda\in S^1}$
 will be  called the \emph{H--numbers}  of the link $L$.
\end{definition}

 From basic properties of Seifert matrices we get

\begin{lemma}\label{l:symSei}
\begin{itemize}
\item[(a)] The H--numbers are symmetric in the sense that for $0<|\lambda|<1$ one has
$q^k_\lambda=q^k_{\bar{\lambda}}$, and
$$p^k_\lambda(\pm 1)=p^k_{\bar{\lambda}}(\pm(-1)^{k+s}) \ \ \ \ \ \ \mbox{for $\lambda \in S^1$}.$$
\item[(b)]H--numbers are additive with respect to the connected sum of links.
\item[(c)] If $L^{or}$ is the link $L$ with all its components with opposite orientation, then the 
H--numbers of $L$ and $L^{or}$ are the same.
\item[(d)] If $L^{mir}$ is the mirror of $L$, then the 
H--numbers are changed  as follows:
$q^{k}_\lambda(L^{mir})=q^{k}_\lambda(L)$ for any $|\lambda|<1$ and
$$p^{k}_\lambda(\pm 1)(L^{mir})=p^{k}_\lambda(\mp 1 )(L)  \ \ \ \ \ \ \mbox{for $\lambda \in S^1$}.$$
\end{itemize}
\end{lemma}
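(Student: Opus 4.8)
The plan is to translate each of the four assertions into a statement about the variation operator $V=(S_\nd^T)^{-1}$ of the link and then invoke the already-established structural lemmas about HVS. For part (a), I would observe that the Seifert form $S$ of a link satisfies $S+S^T=$ intersection form, which is symmetric; equivalently $b=S_\nd-S_\nd^T$ is $(-1)$-hermitian with $\varepsilon=-1$, i.e. $n$ is even, so $(-1)^n=1$. But $V$ itself is a \emph{real} matrix, so Lemma~\ref{l:symmetry} applies verbatim: $q^k_\lambda=q^k_{\bar\lambda}$ and $p^k_\lambda(\pm1)=p^k_{\bar\lambda}(\pm(-1)^{n+k+1+s})$. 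Since here we work with $\varepsilon=-1$ and take $n$ even in the sign convention of Lemma~\ref{l:BB} (one checks the link's $b$ matches the $n\equiv 0$ choice), the exponent $(-1)^{n+k+1+s}$ reduces to $(-1)^{k+1+s}$; I would then reconcile the parity with the stated $(-1)^{k+s}$ by checking the precise normalization of $b^k_\pm$ for the link's Seifert form (equivalently, absorbing a uniform sign that does not affect which of the two isomorphism classes $\mw^k_\lambda(\pm1)$ occurs). This bookkeeping of signs is the only delicate point of (a), and it is the main obstacle of the whole lemma — it is a matter of pinning down conventions, not of substance.

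For part (b), connected sum of links corresponds to block-diagonal (orthogonal) sum of Seifert matrices, hence to the direct sum of the associated HVS; by Proposition~\ref{p:classification} the multiplicities of the indecomposable summands add, which is exactly additivity of the H--numbers. (One should note $S_\nd$ of $L_1\# L_2$ is the direct sum of the $S_\nd$'s up to real $S$-equivalence, using Proposition~\ref{prop:split}(b) to discard the degenerate parts.) For part (c), reversing the orientation of all components of $L$ replaces the Seifert form $S$ by $S^T$ (linking numbers of $n$-cycles in $S^3$ are symmetric in the relevant sense, so $L(\alpha,\beta^+)$ becomes $L(\beta,\alpha^+)=S(\beta,\alpha)$), hence replaces $V=(S_\nd^T)^{-1}$ by $(S_\nd)^{-1}=V^T$; now Lemma~\ref{l:transpose} says $\mv^T\simeq\mv$, so the H--numbers are unchanged.

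For part (d), the mirror image of $L$ has Seifert form $-S$ (reversing the ambient orientation of $S^3$ negates all linking numbers), hence $S_\nd\mapsto -S_\nd$ and $V\mapsto -V$. Lemma~\ref{l:minus} then gives $-\mw^k_\lambda(\pm1)\simeq\mw^k_\lambda(\mp1)$ and $-\mv^{2k}_\lambda\simeq\mv^{2k}_\lambda$, which is precisely the claimed behaviour $q^k_\lambda(L^{mir})=q^k_\lambda(L)$ and $p^k_\lambda(\pm1)(L^{mir})=p^k_\lambda(\mp1)(L)$. The only thing to verify carefully here is the sign convention for the Seifert form of a mirror (some authors get $-S^T$ instead of $-S$); but since part (c) already shows transposition is harmless, either convention leads to the same conclusion. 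Thus all of (a)--(d) follow once the sign normalization underlying Lemma~\ref{l:BB} and Lemma~\ref{l:mvtilde} is matched to the link setting, which is the step I expect to require the most care.
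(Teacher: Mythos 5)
Your overall route is the paper's: parts (b)--(d) are handled exactly as in the text, by the additivity of Seifert matrices under connected sum together with the classical facts $S(L^{or})=S(L)^T$ and $S(L^{mir})=-S(L)^T$ (the paper cites Murasugi), fed into Lemmas~\ref{l:symmetry}, \ref{l:minus} and \ref{l:transpose}; your remark that the $-S$ versus $-S^T$ ambiguity is harmless because of Lemma~\ref{l:transpose} is fine.

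The one place where your write-up goes wrong is precisely the step you single out as ``the main obstacle'' in (a). For links the paper takes $\varepsilon=-1$, and since $\varepsilon=(-1)^n$ this forces $n$ \emph{odd}, not even (think of $n=1$, the plane-curve case). With $n$ odd, Lemma~\ref{l:symmetry} gives $p^k_\lambda(\pm 1)=p^k_{\bar\lambda}(\pm(-1)^{n+k+1+s})=p^k_{\bar\lambda}(\pm(-1)^{k+s})$ on the nose, so no reconciliation of signs is needed at all --- (a) really does follow ``immediately'', as the paper states. Moreover, the repair you propose for your (spurious) discrepancy is not legitimate: you cannot ``absorb a uniform sign that does not affect which of the two isomorphism classes $\mw^k_\lambda(\pm1)$ occurs'', because by Lemma~\ref{l:minus} replacing $V$ by $-V$ \emph{does} swap $\mw^k_\lambda(+1)$ and $\mw^k_\lambda(-1)$; a global sign change is exactly the operation that distinguishes the two classes (this is what drives part (d)). Once the parity of $n$ is corrected, that step disappears and the rest of your argument stands.
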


\begin{proof}
(a) follows immediately from Lemma~\ref{l:symmetry}. As for (b), observe that the Seifert matrix of the connected sum
is the direct sum of the Seifert matrices of the summands. (c) and (d)  follows from the classical facts that the
$S(L^{or})=S(L)^T$ and $S(L^{mir})=-S(L)^T$, cf. \cite{Mur-book} Propositions (5.4.6) and (5.4.7), combined with
(\ref{l:symmetry}), (\ref{l:minus}) and (\ref{l:transpose}).
\end{proof}

If $L$ is an algebraic link, i.e. a link of a plane curve singularity, it has two HVS's: the
variation structure of the singularity $\mv_f$ (see Definition~\ref{d:mvf}) and the variation  structure of
the oriented link $\mv_L$. Obviously, they agree
 $\mv_f\simeq\mv_L$, thanks to the relation $V=(S^T)^{-1}$, cf. the discussion after Definition~\ref{d:Seifert}.

\vspace{2mm}

One has very strong restrictions for
H--numbers of algebraic links: from the classical monodromy theorem
(see e.g. \cite{AVG} or \cite[Chapter 7, \S 4]{Zo})
one reads:

\begin{corollary}\label{c:montheo}
If $L$ is an algebraic link then $q^k_\lambda=0$ for any $|\lambda|<1$. Moreover, $p^k_\lambda(\pm 1)=0$ if at least one of the following conditions is satisfied
\begin{itemize}
\item $\lambda$ is not a root of unity;
\item $\lambda\neq 1$ and $k>2$;
\item $\lambda=1$ and $k>1$.
\end{itemize}
\end{corollary}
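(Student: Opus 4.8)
The plan is to deduce Corollary~\ref{c:montheo} directly from the classical Monodromy Theorem together with the dictionary between the H--numbers and the Jordan structure of the monodromy that is already in place. Recall that for a plane curve singularity $f:(\CC^2,0)\to(\CC,0)$ the link $L$ is the algebraic link and $\mv_L\simeq\mv_f$ (this identification was observed just before the statement, via $V=(S^T)^{-1}$). Hence it suffices to analyze the block decomposition \eqref{eq:mvsum} of $\mv_f$. The Monodromy Theorem asserts two things about the monodromy $h$ acting on $\tilde H_n(F)$ for an isolated hypersurface singularity: first, all eigenvalues of $h$ are roots of unity; second, the size of the Jordan blocks is bounded, namely a Jordan block of $h$ with eigenvalue $\lambda\neq 1$ has size at most $n$, and a block with eigenvalue $1$ has size at most $n+1$. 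For a curve singularity $n=1$, so blocks with $\lambda\neq 1$ have size $\le 1$ and blocks with $\lambda=1$ have size $\le 2$.

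From here the argument is essentially a translation. First, since every eigenvalue of $h$ is a root of unity, in particular $|\lambda|=1$ for every eigenvalue; but the summands $\mv^{2k}_\lambda$ have monodromy with eigenvalues $\lambda$ and $1/\bar\lambda$ of modulus $\neq 1$ (Example~\ref{ex:mv2k} requires $0<|\lambda|<1$), so no such summand can occur. This gives $q^k_\lambda=0$ for all $|\lambda|<1$, and also forces $p^k_\lambda(\pm1)=0$ whenever $\lambda$ is not a root of unity. Next I would recall from \eqref{eq:wk} and Examples~\ref{e:mvlambda}, \ref{l:mvtilde} that in the summand $\mw^k_\lambda(\pm1)$ the monodromy operator is $\lambda J_k$, i.e.\ a single Jordan block of size $k$ with eigenvalue $\lambda$. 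Therefore, if $p^k_\lambda(u)\neq0$ then $h$ (on the space of $\mv_f$, which for $n=1$ is $\tilde H_1(F)$) has a Jordan block of size $k$ with eigenvalue $\lambda$. Applying the bound from the Monodromy Theorem for $n=1$: if $\lambda\neq1$ we must have $k\le n=1$, i.e.\ $k\le 2$ is already too weak --- more precisely $k>2$ is impossible, but in fact even $k=2$ is excluded for $\lambda\neq1$ since the block size bound is $k\le 1$; the statement as written only claims $p^k_\lambda(\pm1)=0$ for $\lambda\neq1$ and $k>2$, which follows a fortiori. Wait --- I should be careful: the cited statement says ``$\lambda\neq1$ and $k>2$'', so I only need $k\le 2$, which is implied by $k\le n=1<2$ with room to spare; no subtlety there. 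Finally, if $\lambda=1$ the bound gives blocks of size at most $n+1=2$, so $k\le 2$; the statement claims $p^k_1(\pm1)=0$ for $k>1$. Hmm, this requires the sharper fact that for a curve singularity the eigenvalue-$1$ Jordan blocks have size at most $1$ on $\tilde H_1(F)$ --- equivalently that the monodromy is trivial on the eigenvalue-$1$ part, or that $s=0$ contributions vanish; this is indeed classical for plane curve singularities (the semisimplicity of the monodromy at eigenvalue $1$, cf.\ the fact that $\Delta(1)\neq0$ is not used, but rather that for irreducible curve germs $1$ is not even an eigenvalue, and for reducible ones the eigenvalue-$1$ part is still semisimple on $H_1$ of the Milnor fiber). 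So I would cite the appropriate refinement of the Monodromy Theorem for curves (e.g.\ from \cite{AVG} or \cite{Zo}) that bounds the eigenvalue-$1$ block size on $\tilde H_1$ by $1$.

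Assembling: no $\mv^{2k}_\lambda$ appears, so $q^k_\lambda=0$; any $\mw^k_\lambda(u)$ that appears has $\lambda$ a root of unity; if $\lambda\neq1$ then $k\le n=1$ so certainly $p^k_\lambda(\pm1)=0$ for $k>2$; if $\lambda=1$ then the sharpened curve bound gives $k\le1$ so $p^k_1(\pm1)=0$ for $k>1$. This is exactly the list of conditions in the statement.

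The main obstacle is not the translation mechanism --- that is routine given Proposition~\ref{p:classification} and the explicit monodromy operators of the indecomposable pieces --- but rather pinning down and citing the correct \emph{sharp} form of the Monodromy Theorem for the eigenvalue-$1$ part of a plane curve singularity, namely that the nilpotent part of the monodromy vanishes on the $\lambda=1$ generalized eigenspace of $\tilde H_1(F)$ (equivalently, all size-$k$ eigenvalue-$1$ blocks have $k=1$). One must be slightly careful because the general Monodromy Theorem only gives block size $\le n+1=2$ at eigenvalue $1$, which would a priori permit $p^2_1(\pm1)\neq0$; the stronger statement that this does not occur for algebraic knots/links is what actually forces $k\le1$, and this should be attributed to the relevant chapter of \cite{AVG} (Monodromy Theorem, weight filtration for curves) or to the structure of the Seifert form of a fibered link with $\Delta(t)$ having no repeated roots of modulus issues --- in any case it is standard and can be quoted without reproof.
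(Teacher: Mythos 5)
Your route is the same as the paper's: the paper offers no argument beyond ``from the classical monodromy theorem one reads'', i.e.\ exactly the translation you carry out --- identify $\mv_L\simeq\mv_f$, note that the pieces $\mv^{2k}_\lambda$ carry eigenvalues off the unit circle (so cannot occur once all eigenvalues are roots of unity, giving $q^k_\lambda=0$ and the non--root--of--unity case), and note that $\mw^k_\lambda(\pm1)$ contributes a single Jordan block $\lambda J_k$, so the block--size bounds of the Monodromy Theorem become vanishing of $p^k_\lambda(\pm1)$.

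The one genuine flaw is that you quote the Monodromy Theorem with its two bounds interchanged. The correct statement is: all Jordan blocks of $h$ on $\tilde H_n(F)$ have size at most $n+1$, and the blocks with eigenvalue $1$ have size at most $n$. You assert size $\le n$ for $\lambda\neq1$ and $\le n+1$ for $\lambda=1$. Two consequences. First, your claim that for a plane curve ($n=1$) no $2\times2$ block with eigenvalue $\lambda\neq1$ can occur is simply false --- the lemma immediately following Corollary~\ref{c:montheo} in the paper records that $p^2_\lambda(+1)$ can be positive for algebraic links; this does not hurt the corollary itself, which only claims vanishing for $k>2$, and the true bound $k\le n+1=2$ delivers exactly that, but as written your proof derives the case $\lambda\neq1$ from a false premise. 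Second, the ``sharper fact'' you feel obliged to track down separately for $\lambda=1$ (that the eigenvalue--$1$ part of the monodromy is semisimple on $\tilde H_1(F)$, i.e.\ $k\le1$) is not an extra refinement peculiar to curves: it is precisely the eigenvalue--$1$ clause of the standard Monodromy Theorem, so the references you already name (\cite{AVG}, \cite{Zo}) cover it, and your hedging there (irreducible versus reducible germs, ``weight filtration for curves'') is unnecessary. With the two bounds restored to their correct places, your argument coincides with the paper's.
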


Corollary~\ref{c:montheo} admits further improvements, see e.g.
\cite[Proposition~6.14]{Nem-real}.

\begin{lemma}
If $L$ is algebraic link and $\lambda\in S^1\setminus\{1\}$, then $p^2_\lambda(-1)=0$ ($p^2_\lambda(+1)$ can be positive)
and $p^1_1(-1)=0$.
\end{lemma}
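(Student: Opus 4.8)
The plan is to use the mixed Hodge structure of a plane curve singularity together with the dictionary between equivariant Hodge numbers and the $H$--numbers $p^k_\lambda(u)$ recalled just before Definition~\ref{d:Seifert}. Recall that for an isolated plane curve singularity ($n=1$) one has $s=1$ when $\lambda=1$ and $s=0$ otherwise, and that the vanishing cohomology $U=\tilde H_1(F,\CC)$ carries a mixed Hodge structure with weights between $0$ and $2$; concretely, the $\lambda$--generalized eigenspace of $h$ on $U$ sits in weights $\{0,1\}$ when $\lambda\ne 1$ and in weights $\{1,2\}$ when $\lambda=1$. Consequently the primitive Hodge numbers $p^{a,b}_\lambda$ are only possibly nonzero for $a+b\in\{0,1\}$ in the $\lambda\ne 1$ case (so $a+b=n+s-1+k-1$ forces $k\in\{1,2\}$, reproving the part of Corollary~\ref{c:montheo} that $k\le 2$), and for $a+b=2$ in the $\lambda=1$ case (forcing $k=1$). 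The two assertions to prove are about the \emph{extremal} Jordan blocks of size $2$ (resp. $1$), i.e. about which of the two sign choices $u=\pm1$ can actually occur.

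First I would treat $p^2_\lambda(-1)$ for $\lambda\in S^1\setminus\{1\}$. By the formula $p^k_\lambda(u)=\sum_{a+b=k+n+s-1,\ (-1)^b=u}p^{a,b}_\lambda$, with $n=1$, $s=0$, $k=2$ we need $a+b=2$ and $(-1)^b=-1$, i.e. $b$ odd, so $(a,b)=(1,1)$. Thus $p^2_\lambda(-1)=p^{1,1}_\lambda$. But for a curve singularity with $\lambda\ne 1$ the Hodge filtration on the weight-$1$ part is a genuine weight-one Hodge structure on a subquotient of $H^1$ of a curve, hence of type $\{(1,0),(0,1)\}$ only; the $(1,1)$--part, which would live in weight $2$, vanishes for $\lambda\ne 1$. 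Equivalently, the Steenbrink spectrum of a curve singularity lies in $(0,2)$, never hitting the integer boundary for $\lambda\ne 1$; translating through Definition~\ref{d:mod2spec}, a nonzero $p^2_\lambda(-1)$ would force a spectral number in the wrong position. So $p^{1,1}_\lambda=0$ and $p^2_\lambda(-1)=0$, while $p^2_\lambda(+1)=p^{2,0}_\lambda+p^{0,2}_\lambda$ can be positive, corresponding to genuine $(2,0)$ or $(0,2)$ classes — e.g. for a singularity whose monodromy has a $2\times 2$ Jordan block with eigenvalue $\lambda\ne 1$, as occurs already for suitable non-reduced-looking but reduced curves (or one can cite the examples in Section~4 / \cite{Nem-real}).

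For the second claim, $p^1_1(-1)=0$: here $\lambda=1$, so $s=1$, $n=1$, $k=1$, and the formula gives $a+b=k+n+s-1=2$ with $(-1)^b=-1$, i.e. $b$ odd, $(a,b)=(1,1)$, so $p^1_1(-1)=p^{1,1}_1$. Now the $\lambda=1$ part of the mixed Hodge structure of a plane curve singularity comes from $H^1$ of the (smooth) Milnor fiber and is concentrated in weights $1$ and $2$; the relevant fact (e.g. \cite[Section~6]{Nem-real}, or the description of the $1$--eigenspace via the normalization of the curve) is that the \emph{primitive} part in the top weight graded piece, where a size-one $\lambda=1$ block would register, has Hodge type $(1,1)$ impossible: $p^{1,1}_1$ would have to come from a weight-two primitive class, but for curves the $1$--eigenspace contributes size-one blocks only through its weight-one (type $(1,0)$, i.e. $b=0$, $u=+1$) part. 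Hence $p^1_1(-1)=0$ and only $p^1_1(+1)$ survives among the size-one $\lambda=1$ blocks.

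The genuinely routine parts are the substitutions into $p^k_\lambda(u)=\sum p^{a,b}_\lambda$; the main obstacle is pinning down precisely \emph{why} the offending primitive Hodge numbers vanish for curves, i.e. citing or reproving the statement that the mixed Hodge structure of an isolated plane curve singularity has no $(1,1)$--type primitive classes in the $\lambda\ne 1$ blocks of size $2$ and none in the $\lambda=1$ blocks of size $1$. I would handle this by invoking the explicit description of $\mv_f$ in terms of $\{p^{a,b}_\lambda\}$ from \cite[Theorem~6.1]{Nem-real} together with the curve-case weight bounds (spectrum in $(0,2)$ off the integers, symmetry $h^{a,b}_\lambda=h^{2-a,2-b}_{\bar\lambda}$ from the excerpt), or alternatively by the Hodge-theoretic input that a weight-one graded piece of any mixed Hodge structure has only $(1,0)$ and $(0,1)$ Hodge components — which is exactly where the $b$--parity picks out the forbidden sign. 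Everything else is bookkeeping with $n=1$ and the two values of $s$.
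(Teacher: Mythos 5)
The paper does not actually prove this lemma: it is quoted as an improvement of Corollary~\ref{c:montheo} with a citation to \cite[Proposition~6.14]{Nem-real}, so the only thing to assess is whether your Hodge--theoretic derivation is sound. Your purely formal reduction is fine: with $n=1$ the quoted decomposition $\mv_f\simeq\bigoplus p^{a,b}_\lambda\cdot\mw^{a+b+1-n-s}_\lambda((-1)^b)$ gives $p^2_\lambda(-1)=p^{1,1}_\lambda$ for $\lambda\neq 1$ and $p^1_1(-1)=p^{1,1}_1$. The gap is in the vanishing claims you then make, which contradict the standard facts you yourself invoke. For an isolated plane curve singularity the Hodge numbers of the vanishing cohomology satisfy $0\le a,b\le n=1$ (equivalently $F^2=0$, spectrum in $(0,2)$), so there are \emph{no} $(2,0)$ or $(0,2)$ classes at all; the primitive class of a $2\times 2$ Jordan block at $\lambda\neq 1$ sits in weight $2$ and is therefore of type $(1,1)$, and the eigenvalue--$1$ part (dimension $r-1$ for $r$ branches) has $N=0$, hence is pure of weight $n+1=2$ and again of type $(1,1)$. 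So $p^{1,1}_\lambda$ is exactly the quantity that is \emph{nonzero} whenever the blocks in question exist, and your proposed carrier of $p^2_\lambda(+1)$, namely $p^{2,0}_\lambda+p^{0,2}_\lambda$, is identically zero for curves. Taken literally, your argument outputs $p^2_\lambda(+1)=0$ and $p^1_1(+1)=0$, i.e.\ the opposite of the lemma; a direct check on the algebraic Hopf link (Seifert matrix $(-1)$, $V=-1$, structure $\mvt^1_1(+1)$, so $p^1_1(+1)=1$, $p^1_1(-1)=0$, while its unique vanishing-cohomology class is of type $(1,1)$) shows that the naive dictionary ``$u=(-1)^b$ with the usual Steenbrink $(a,b)$'' cannot be combined with these weight facts in the way you do. The entire content of the lemma is precisely this sign bookkeeping --- the normalizations of $b^k_\pm$ via $(b^k_\pm)_{1,k}=\pm i^{-n^2-k+1}$, the $\varepsilon=(-1)^n$ and Seifert-form conventions of \cite[Sections 5--6]{Nem-real} --- and your proof defers exactly that point (you name it as ``the main obstacle'' but resolve it with assertions that are false for curves).

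Two subsidiary steps also fail. The ``spectral number in the wrong position'' argument cannot work even in principle: by Definition~\ref{d:mod2spec}, even-size blocks contribute to the mod~$2$ spectrum with weight $k/2$ \emph{independently of} $u$, so the spectrum can never separate $p^2_\lambda(+1)$ from $p^2_\lambda(-1)$. And for $\lambda=1$ your fallback, that the size-one blocks ``register through the weight-one $(1,0)$ part'', is inconsistent with the very formula you are using: primitive numbers are supported on $a+b\ge n+s=2$, and a $(1,0)$ class would correspond to $k=a+b+1-n-s=0$, i.e.\ to no block at all; moreover it would make $p^1_1(\pm 1)$ even by reality, contradicting the Hopf link. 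A correct self-contained proof has to either redo the convention-careful identification of \cite[Theorem~6.1 and Proposition~6.14]{Nem-real} (tracking which of $\mw^k_\lambda(\pm 1)$ the type-$(1,1)$ primitive classes actually produce under the paper's Seifert-form conventions), or argue directly with the real Seifert forms of algebraic links (e.g.\ via their fibered, plumbing/splice structure), rather than from the parity of $b$ alone.
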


\section{Classical link invariants and H--numbers}\label{s:linkinv}
Having defined the  H--numbers, we wish
 to study their relationship with classical invariants
of the link $L$.
Recall that we have the decomposition $S=S_\nd\oplus S_0$ and the newly defined  numbers
are associated with $S_\nd$, see  (\ref{d:phn}).
\subsection{Alexander polynomial}\label{ss:ALEX}
 Define the polynomial $P(t)\in{\mathbb R}[t]$ by
$$P(t):= \prod_{|\lambda|=1} (t-\lambda)^{\sum_{k,u}kp^k_\lambda(u)}
\prod_{0<|\lambda|<1}\left((t-\lambda)(t-1/\bar{\lambda})\right)^{\sum_{k} kq^k_\lambda}.$$

\begin{lemma}\label{l:alex0}
The Alexander polynomial $\Delta(t)$ is zero if
$S_0\not=0$, and it equals $P(t)$ (up to an invertible element of $\mathbb{R}[t,t^{-1}]$) otherwise.
In this second case, the degree of $\Delta(t)$
is equal to the cardinality of the extended spectrum $ESp$.
\end{lemma}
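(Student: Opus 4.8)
The plan is to reduce the statement to facts about the non-degenerate part $S_\nd$ of the Seifert form and the structure of $\mv_L$, and then to match degrees by a term-by-term count over the indecomposable summands.

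First I would handle the vanishing claim. The Alexander polynomial can be computed as $\Delta(t) \doteq \det(tS - S^T)$ (up to units in $\mathbb{R}[t,t^{-1}]$) from any Seifert matrix $S$ of $L$. Using the splitting $S = S_\nd \oplus S_0$ from Proposition~\ref{prop:split}(a), we get $\det(tS - S^T) = \det(tS_\nd - S_\nd^T)\cdot\det(tS_0 - S_0^T)$. Since $S_0$ is a zero matrix of size $\dim S_0 \times \dim S_0$, the second factor is $\det(0) = 0$ whenever $\dim S_0 > 0$, which gives $\Delta(t) = 0$ in that case. (One should note the $S$-equivalence of Proposition~\ref{prop:split}(a) changes $S$ only by operations that multiply $\det(tS-S^T)$ by a unit, so this is legitimate.) When $S_0 = 0$, we have $\Delta(t) \doteq \det(tS_\nd - S_\nd^T)$, a polynomial of degree $m = \operatorname{rank}(V)$ with nonzero constant term, hence well defined up to units.

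Next I would identify $\det(tS_\nd - S_\nd^T)$ with $P(t)$ up to units. Recall $V = (S_\nd^T)^{-1}$ and $h = (S_\nd^T)^{-1} S_\nd = V S_\nd^T$; equivalently $S_\nd^T = V^{-1}$ and $S_\nd = h V^{-1}$ (since $h V \bar{h}^* = V$ and over $\mathbb{R}$ this reads $h V h^T = V$, so $S_\nd = V^{-1} h^{-1} \cdot h \cdots$ — I would just write $S_\nd = (V^T)^{-1}$ directly as given, and note $S_\nd^T = V^{-1}$). Then
\[
tS_\nd - S_\nd^T = tS_\nd - V^{-1} = V^{-1}(t\, V S_\nd - I) = V^{-1}(t\, h - I)
\]
using $VS_\nd = VS_\nd^T{}^T$... more cleanly: from $Vb = h - I$ and $b = S_\nd - S_\nd^T$ one does not immediately get this, so the cleanest route is $h = S_\nd^{-T} S_\nd$, hence $S_\nd^T h = S_\nd$, hence $tS_\nd - S_\nd^T = S_\nd^T(th - I)$. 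Therefore $\det(tS_\nd - S_\nd^T) = \det(S_\nd^T)\cdot\det(th - I) \doteq \det(tI - h)$ up to a nonzero constant (and a unit from replacing $\det(th-I)$ by $\det(tI-h)$, valid since $h$ is invertible). So $\Delta(t) \doteq \det(tI - h)$, the characteristic polynomial of the monodromy. Now decompose $\mv_L$ via Proposition~\ref{p:classification}: each $\mw^k_\lambda(\pm 1)$ contributes a $k\times k$ Jordan block with eigenvalue $\lambda \in S^1$, hence a factor $(t-\lambda)^k$; each $\mv^{2k}_\lambda$ contributes $\lambda J_k \oplus \frac1{\bar\lambda}(J_k^*)^{-1}$, hence a factor $(t-\lambda)^k(t - 1/\bar\lambda)^k$. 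Summing exponents over all summands with multiplicities $p^k_\lambda(u)$ and $q^k_\lambda$ gives exactly $P(t)$; this proves $\Delta(t) \doteq P(t)$.

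Finally, the degree computation: $\deg P(t) = \sum_{|\lambda|=1, k, u} k\, p^k_\lambda(u) + \sum_{0<|\lambda|<1, k} 2k\, q^k_\lambda$. I would match this to $\# ESp = \# Sp + \# ISp$. For the non-real eigenvalue part, a block $\mv^{2k}_\lambda$ contributes $k$ to $ISp$ (at $x+iy$) and $k$ to $ISp$ (at $1+x-iy$), i.e. $2k$ total, matching the $2k\,q^k_\lambda$ term. For $|\lambda|=1$, I need $\# Sp = \sum_{k,u} k\, p^k_\lambda(u)$ summed over $\lambda \in S^1$: for $k$ even, each $p^k_\lambda(u)$ contributes $k/2$ to each of two spectral points (or, per Definition~\ref{d:mod2spec}, $s(\alpha) = \frac{k}{2}p^k_\lambda(u)$ at the two relevant $\alpha$'s), total $k$; for $k$ odd, the formula gives $\frac{k-uv}{2} + \frac{k+uv}{2} = k$ across the two points $\alpha, \alpha'$ with $\lambda = e^{2\pi i\alpha}$ and complementary parities of $\lfloor\alpha\rfloor$; and for $\lambda = 1$ the stated rule ($k/2$ and $k/2$, or $(k\pm1)/2$) again totals $k$. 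Summing over everything yields $\# Sp = \sum_{|\lambda|=1,k,u} k\, p^k_\lambda(u)$, so $\# ESp = \deg P = \deg \Delta$.

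The main obstacle I anticipate is bookkeeping rather than conceptual: being careful that the $S$-equivalences and the passage from $S$ to $S_\nd$ only introduce units of $\mathbb{R}[t,t^{-1}]$ (so that "$\doteq$" is preserved and the degree is well defined), and especially verifying the spectrum-counting identity in the case $k$ odd and $\lambda = 1$, where the floor function $v = (-1)^{\lfloor\alpha\rfloor}$ and the special convention for eigenvalue $1$ must be reconciled so that each $p^k_\lambda(u)$ contributes exactly $k$ to $\# Sp$. Everything else is a direct factor-by-factor comparison using Proposition~\ref{p:classification}.
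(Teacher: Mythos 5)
Your proposal is correct and follows essentially the same route as the paper, whose entire proof is the identity $\Delta(t)=\det(S_\nd-tS_\nd^T)=\det S_\nd^T\cdot\det(h-tI)$; you simply spell out the vanishing case, the unit bookkeeping, and the degree-versus-$\#ESp$ count that the paper leaves implicit. (Only a trivial slip: $h=VS_\nd$, not $VS_\nd^T$, but your final chain $tS_\nd-S_\nd^T=S_\nd^T(th-I)$ is the correct one.)
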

\begin{proof}
We have
$\Delta(t)=\det(S_\nd-tS_\nd^T)=\det S_\nd^T\cdot \det(h-tI).$
\end{proof}
As the Alexander polynomial of a knot has no root at $t=1$, we get:
\begin{corollary}
If $L$ is a knot then $p_1^k(\pm 1)=0$.
\end{corollary}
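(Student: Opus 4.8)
The plan is to combine Lemma~\ref{l:alex0} with the classical fact that the Alexander polynomial of a knot does not vanish at $t=1$ (indeed, $\Delta(1)=\pm 1$ for a knot). First I would note that for a knot the Seifert form is non-degenerate (the Seifert surface carries a non-degenerate intersection form, so $S-S^T$ is invertible), hence $S_0=0$ and, by Lemma~\ref{l:alex0}, $\Delta(t)=P(t)$ up to a unit in $\mathbb{R}[t,t^{-1}]$. In particular $P(1)\neq 0$. Now look at the explicit product formula for $P(t)$: the eigenvalue $\lambda=1$ contributes a factor $(t-1)^{e}$ with exponent $e=\sum_{k,u}k\,p^k_1(u)$, and all other factors are nonzero at $t=1$ (for $|\lambda|=1$, $\lambda\neq 1$ the factor $(t-\lambda)$ is nonzero at $1$; for $0<|\lambda|<1$ the factor $(t-\lambda)(t-1/\bar\lambda)$ is nonzero at $1$ since neither $\lambda$ nor $1/\bar\lambda$ equals $1$). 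Therefore $P(1)\neq 0$ forces $e=0$, i.e. $\sum_{k,u}k\,p^k_1(u)=0$. Since each $p^k_1(u)\geq 0$ and $k\geq 1$, every term vanishes, so $p^k_1(\pm 1)=0$ for all $k$.

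The only point requiring a word of justification is why $\Delta(1)\neq 0$ for a knot; this is standard (it follows from $H_1$ of the infinite cyclic cover being finite, or directly from $\det(S-S^T)=\pm 1$ for a knot Seifert matrix, which also gives $\Delta(1)=\det(S-S^T)=\pm1$), and it is precisely the hypothesis quoted in the sentence preceding the corollary. I do not expect any genuine obstacle here: the statement is an immediate bookkeeping consequence of the product formula for $P(t)$ together with the non-vanishing of $\Delta$ at $1$. If one wanted to avoid invoking $\Delta(1)=\pm1$, one could instead argue that $h-I$ is invertible on the non-degenerate part: $h-I=V\circ b$ with $b$ an isomorphism on $U$, and for a knot $V=(S_\nd^T)^{-1}=(S^T)^{-1}$ is an isomorphism, so $h-I$ is an isomorphism, which means $1$ is not an eigenvalue of $h$ at all; this directly kills all $p^k_1(u)$ without reference to the Alexander polynomial. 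Either route is a one-line argument, and I would probably present the second, self-contained one as a remark.
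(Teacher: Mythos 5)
Your main argument is exactly the paper's: the corollary is stated immediately after the observation that a knot's Alexander polynomial has no root at $t=1$, and it follows from Lemma~\ref{l:alex0} by precisely the bookkeeping you carry out with the factor $(t-1)^{\sum_{k,u}k\,p^k_1(u)}$ in $P(t)$ (together with $\Delta(1)=\pm1$ forcing $S_0=0$, so that $\Delta\doteq P$). One caution about the alternative you say you would prefer to present: the assertion that $b$ is an isomorphism for a knot is not free, since $b=S_\nd-S_\nd^T$ lives on the non-degenerate part coming from Keef's S-equivalence decomposition, and its invertibility is (given that $V$ is always an isomorphism by construction) exactly equivalent to $1$ not being an eigenvalue of $h$, i.e.\ to the absence of the degenerate pieces $\mvt^k_1(\pm1)$ --- which is the statement being proved; to make that route non-circular you must still import $\det(S-S^T)=\pm1$ and check that non-degeneracy of the skew-symmetrization is preserved under real S-equivalence (or simply invoke $\Delta(1)=\pm1$ again), so it is not genuinely more self-contained than the first argument.
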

The symmetry property of H--numbers (Lemma~\ref{l:symmetry})
explains (once again)  the well-known property of the Alexander polynomial, namely, if we write
$\Delta=a_0+a_1t+\dots+a_{m}t^{m}$, then $a_{n}=(-1)^ma_{m-n}$.

\subsection{Higher Alexander polynomials}\label{sub:higherAlex}
Let us recall briefly the construction of the higher order Alexander polynomials (see \cite[Definition~8.10]{BZ})
via   higher order elementary ideals
of the matrix $S-tS^T$.  We remark that our
construction differs slightly from the standard one, because we consider ideals in ${\mathbb R}[t,t^{-1}]$ instead of
$\mathbb{Z}[t,t^{-1}]$ (hence we loose some information about ${\mathbb Z}$--torsion elements).

Let $\ell$ be a positive integer.
Consider an $\ell\times \ell$ matrix $H$ over $\mathbb{R}[t,t^{-1}]$.
For $0\le n<\ell$, let $E_{n}$ be
the ideal in $\mathbb{R}[t,t^{-1}]$
generated by the determinants of all $(\ell-n)\times(\ell-n)$ minors of $H$. As $\mathbb{R}[t,t^{-1}]$ is
a principal ideal domain,
the ideal $E_n$ is generated by a single element $\Delta^H_{n}(t)\in\mathbb{R}[t,t^{-1}]$. $\Delta^H_{n}(t)$ is defined
only up to an invertible element in $\mathbb{R}[t,t^{-1}]$, multiplying it by $t$ in the appropriate power we can guarantee
that $\Delta^H_n$ is in fact a polynomial and, unless it is the zero polynomial, that $\Delta^H_n(0)\neq 0$.
\begin{definition}\label{d:alek}
The polynomial $\Delta^H_{n}(t)$ for $H=S-tS^{T}$ is called the $n$-th Alexander polynomial of the link $L$ and denoted by
$\Delta_n(t)$.
\end{definition}
The indexing was chosen so that $\Delta_0$ is the standard Alexander polynomial.
If $m_0$ is the rank of $S_0$, then $\Delta_n=0$ for $0\leq n<m_0$ and  $\Delta_{m_0}=P$, cf. (\ref{l:alex0}).

Our goal now is to express $\Delta_n$ in terms of the primitive  numbers. Notice that multiplying $H$ by a non-degenerate
matrix independent of $t$ or taking its transpose does not change the polynomials $\Delta_n$. Therefore,
$\Delta_{n+m_0}(S-tS^T)=\Delta_n(h-tI)$.
By choosing a suitable basis of $U$ we may also assume that $h$ is in the Jordan form. Moreover,
if $H=\lambda J_k-tI$,  then $\Delta^H_0=(t-\lambda)^k$ and $\Delta^H_1=1$.
Next,  we need to see what happens when we take a direct sum of several matrices.

\begin{lemma}\label{l:sum}
Let $H^1$ and $H^2$ be two square matrices, $H=H^1\oplus H^2$, and $\Delta^{H_1}_i$, $\Delta^{H_2}_j$, $\Delta^H_k$
the corresponding higher Alexander polynomials.
For fixed $\lambda\in\mathbb{C}^*$, let $a_i$ (respectively $b_j$, $c_k$) be the multiplicity of $(t-\lambda)$ in
$\Delta^{H_1}_i$ (respectively $\Delta^{H_2}_j$, $\Delta^{H_3}_k$).
Then
\[c_k=\min\{ a_i+b_j\ \colon\  i+j=k\}.\]
\end{lemma}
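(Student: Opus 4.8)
The statement concerns the behaviour of elementary ideals under block direct sum, localized at a single factor $(t-\lambda)$. The plan is to work locally: replace the PID $\mathbb{R}[t,t^{-1}]$ by its localization $R=\mathbb{R}[t,t^{-1}]_{(t-\lambda)}$, which is a discrete valuation ring with uniformizer $(t-\lambda)$ and valuation $v=\ord_{t=\lambda}$. For any square matrix $M$ over $\mathbb{R}[t,t^{-1}]$, the multiplicity of $(t-\lambda)$ in $\Delta^M_n$ equals $v$ of the ideal generated by all $(\ell-n)\times(\ell-n)$ minors of $M$, computed over $R$. So it suffices to prove: if $a_i$ (resp.\ $b_j$) denotes the minimal valuation of a $(\text{size}-i)$-minor of $H^1$ (resp.\ $H^2$), then the minimal valuation of a $(\text{size}-k)$-minor of $H^1\oplus H^2$ equals $\min\{a_i+b_j : i+j=k\}$.

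The cleanest route is via the Smith normal form over the DVR $R$: choose invertible matrices (over $R$) $P_1,Q_1,P_2,Q_2$ so that $P_1 H^1 Q_1 = D_1$ and $P_2 H^2 Q_2 = D_2$ are diagonal with diagonal entries of valuations $0\le e^1_1\le e^1_2\le\cdots$ and $0\le e^2_1\le e^2_2\le\cdots$ (padding with a $0$ entry of infinite valuation if a matrix is singular). Since multiplying $H^i$ left and right by $R$-invertible matrices changes neither the elementary ideals nor their valuations, and since $(P_1\oplus P_2)(H^1\oplus H^2)(Q_1\oplus Q_2)=D_1\oplus D_2$, I may replace $H^1,H^2$ by $D_1,D_2$. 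For a diagonal matrix the minimal-valuation minor of any given size is obtained by picking the smallest available diagonal entries, so $a_i = \sum_{r\le \ell_1-i} e^1_r$ (the sum of all but the $i$ largest valuations), and likewise for $b_j$; and for the block-diagonal $D_1\oplus D_2$ the minimal-valuation $(\ell-k)$-minor is the sum of the $\ell-k$ smallest entries of the combined multiset $\{e^1_r\}\cup\{e^2_s\}$. A short combinatorial check then shows that dropping the $k$ largest entries from the union is the same as dropping $i$ from the first list and $j$ from the second for the optimal split $i+j=k$; this yields exactly $c_k=\min\{a_i+b_j : i+j=k\}$. One should note $a_i$ is nondecreasing in $i$ (indeed $a_{i}-a_{i+1}=e^1_{\ell_1-i}\ge 0$), which is implicitly needed for the min to make sense over the allowed range $0\le i,j$ and $i\le\ell_1$, $j\le\ell_2$; entries of valuation $+\infty$ handle singular blocks uniformly, and one checks that terms $a_i+b_j$ with a genuinely infinite summand simply never achieve the minimum when a finite option exists.

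I expect the main obstacle to be purely bookkeeping: matching the indexing convention ($\Delta_n$ is generated by $(\ell-n)\times(\ell-n)$ minors, so larger $n$ means smaller minors, and the minimal valuation is nondecreasing in $n$) against the additive index $i+j=k$, and handling the singular blocks and the boundary ranges of $i,j$ cleanly. None of this is deep; the one genuinely substantive input is the Smith normal form over the DVR $R=\mathbb{R}[t,t^{-1}]_{(t-\lambda)}$, which legitimizes the reduction to diagonal matrices, after which everything is the elementary observation that the $m$-th elementary ideal of a diagonal matrix is generated by the product of its $m$ entries of smallest valuation. An alternative, avoiding Smith form, is to argue directly with the Cauchy–Binet-type expansion of a minor of a block-diagonal matrix (each such minor factors as a product of a minor of $H^1$ and a minor of $H^2$ of complementary sizes), which gives the inequality $c_k\le\min\{a_i+b_j\}$ immediately and, via the reverse containment of ideals, the matching lower bound; I would present whichever is shorter in the write-up, but the DVR reduction is the conceptually tidiest.
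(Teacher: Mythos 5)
Your proposal is correct, but your main route differs from the paper's. The paper's entire proof is the single observation you relegate to your final "alternative": for a block matrix $H=H^1\oplus H^2$, every minor of $H$ with non-zero determinant is of the form $A_1\oplus A_2$ with $A_1$, $A_2$ minors of $H^1$, $H^2$ (a submatrix taking unequal numbers of rows and columns from a block has vanishing determinant), and conversely every such $A_1\oplus A_2$ is a minor of $H$; since $\det(A_1\oplus A_2)=\det A_1\cdot\det A_2$ and the order of vanishing at $t=\lambda$ of a generator of an elementary ideal is the minimum of the orders of the generating minors, both inequalities $c_k\le\min\{a_i+b_j\}$ and $c_k\ge\min\{a_i+b_j\}$ drop out at once. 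Your primary argument -- localize at $(t-\lambda)$ to get a DVR, pass to Smith normal form, reduce to diagonal matrices, and then check the combinatorial identity that dropping the $k$ largest valuations from the merged multiset is the optimal split $i+j=k$ -- is also valid (Fitting/elementary ideals are invariant under multiplication by invertible matrices, and your handling of singular blocks via infinite valuations matches the paper's convention in the remark following the lemma). What the Smith-form route buys is a cleaner structural picture: the valuations $e^m_r$ are exactly the Jordan-block data that Proposition~\ref{p:highalex} extracts afterwards, so your proof essentially proves that proposition along the way; what it costs is machinery the two-line minor argument does not need, plus one small point of bookkeeping you should fix in a write-up: for non-real $\lambda$ the element $t-\lambda$ does not lie in $\mathbb{R}[t,t^{-1}]$, so you should either extend scalars to $\mathbb{C}$ (harmless, since orders of vanishing at $\lambda$ and the minimal order among minors are unchanged) or localize at the real prime $t^2-2\mathrm{Re}(\lambda)t+|\lambda|^2$.
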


\begin{proof}
It is enough to use the fact that for arbitrary
minors $A_1$, $A_2$ of $H^1$ and $H^2$, $A_1\oplus A_2$ is a minor of $H$. Moreover, any minor
of $H$, with non-zero determinant, arises in this way. 
\end{proof}

\begin{remark}
Lemma~\ref{l:sum} works if some Alexander polynomials $\Delta^{H_1}_i$ or $\Delta^{H_2}_j$ are identically zero. We only have to agree that
the multiplicity of $(t-\lambda)$ in a zero polynomial is $+\infty$. 
\end{remark}

Fix  $\mu\in\mathbb{C}^*$ and set for each $k\geq 1$
\begin{equation}\label{eq:sk}
s_k(\mu):=\left\{\begin{array}{ll}
p^k_\mu(+1)+p^k_\mu(-1) & \mbox{if $\mu\in S^1$},\\
q^k_\lambda & \mbox { if $\mu\in \{\lambda,1/\bar{\lambda}\}, \ \ (|\lambda|<1$)}.
\end{array}\right.
\end{equation}
I.e., $s_k(\mu)$ is the number of Jordan blocks of size $k$ with eigenvalue $\mu$.
Now let $\Theta:=\{\theta_1,\ldots,\theta_r\}$, $\theta_1\leq \cdots \leq \theta_r$,
 be a set of integers, such that each $k\in\mathbb{Z}$ is contained in $\Theta$
precisely $s_k(\mu)$ times (hence  $r=r(\mu)=\#\Theta=\sum s_k(\mu)$).
Define the function
\[
I(n)=
\begin{cases}
\sum_{i=1}^{r(\mu)-n}\theta_i&\text{ for $n< r(\mu)$}\\
0&\text{ otherwise}.
\end{cases}
\]
The above facts combined provide:
\begin{proposition}\label{p:highalex}
The multiplicity of the root $\mu$ in the $n$--th Alexander polynomial $\Delta^h_n$ is  $I(n)$.
\end{proposition}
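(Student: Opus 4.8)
The plan is to decompose the monodromy operator $h$ into Jordan blocks and reduce the computation of $\Delta^h_n$ to a bookkeeping problem about minors of block-diagonal matrices. Concretely, by the remarks preceding the statement, after a $t$--independent change of basis we may assume $h=\bigoplus_j \mu_j J_{k_j}$, and the higher Alexander polynomials are insensitive to such base changes. The building block is the single Jordan block: if $H=\mu J_k - tI$ then $\Delta^H_0=(t-\mu)^k$ and $\Delta^H_n=1$ for $n\geq 1$; I would note this follows because the $(k-1)\times(k-1)$ minor obtained by deleting the first row and last column is upper-triangular with all diagonal entries equal to $-t$ (or, after normalizing, a unit in $\mathbb{R}[t,t^{-1}]$), so $E_1=(1)$. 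Thus for a single block, the multiplicity of $(t-\mu)$ in $\Delta^H_n$ is $k$ if $n=0$ and $0$ if $n\geq 1$.

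Next I would fix the eigenvalue $\mu$ and iterate Lemma~\ref{l:sum}. Writing $h\ominus(\text{blocks with eigenvalue }\mu)$, the blocks not carrying eigenvalue $\mu$ contribute multiplicity $0$ to every $\Delta_n$ at $t=\mu$, so by Lemma~\ref{l:sum} (in the form valid with the $+\infty$ convention for the zero polynomial) they do not affect the answer at all. Hence we may assume $h=\bigoplus_{i=1}^r \mu J_{\theta_i}$ with $\theta_1\leq\cdots\leq\theta_r$ the multiset $\Theta$ defined from the $s_k(\mu)$. Applying Lemma~\ref{l:sum} repeatedly, the multiplicity $c_n$ of $(t-\mu)$ in $\Delta^h_n$ is the minimum, over all ways of writing $n=n_1+\cdots+n_r$ with $n_i\geq 0$, of $\sum_i a^{(i)}_{n_i}$, where $a^{(i)}_0=\theta_i$ and $a^{(i)}_{n_i}=0$ for $n_i\geq 1$. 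To minimize, for each index $i$ we either "charge" $n_i=0$ and pay $\theta_i$, or "charge" $n_i\geq 1$ and pay $0$; since the total charge must be exactly $n$ and each nonzero $n_i$ costs at least one unit of $n$, we can afford to send at most $n$ of the indices to the free ($0$--cost) option, and to minimize the sum we should free the $n$ largest $\theta_i$. This gives $c_n=\sum_{i=1}^{r-n}\theta_i$ when $n<r$ (freeing $\theta_{r-n+1},\dots,\theta_r$) and $c_n=0$ when $n\geq r$ — precisely $I(n)$.

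The one point requiring a little care — and the main obstacle in making the argument airtight — is the inductive use of Lemma~\ref{l:sum} for a sum of more than two blocks together with the zero-polynomial convention: one must check that the formula $c_k=\min\{a_i+b_j: i+j=k\}$ composes associatively and that "minimum over ordered compositions" genuinely equals the greedy "free the largest weights" prescription. Both are elementary: associativity of the min-plus convolution is standard, and the greedy optimality is the observation that swapping a freed small weight for a charged large weight never increases the sum while preserving the total index budget. I would also remark that the $+\infty$ convention handles the case where some $\theta_i=+\infty$ vacuously — that situation does not arise here since all Jordan blocks are finite-dimensional — and that the indexing is consistent with $\Delta^h_0$ having $(t-\mu)$--multiplicity $\sum_i\theta_i = \sum_k k\, s_k(\mu)$, matching the formula for $P(t)$ in Lemma~\ref{l:alex0}. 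With these pieces assembled, the proposition is immediate.
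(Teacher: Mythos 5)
Your proposal is correct and follows essentially the same route as the paper, which likewise reduces to Jordan form, uses the single-block computation $\Delta^H_0=(t-\mu)^k$, $\Delta^H_1=1$, and then iterates the min--plus convolution of Lemma~\ref{l:sum} (with the $+\infty$ convention) to arrive at $I(n)$ as the sum of the $r-n$ smallest $\theta_i$. One cosmetic slip: the triangular $(k-1)\times(k-1)$ minor of $\mu J_k-tI$ witnessing $E_1=(1)$ has the constant $\mu$ (coming from the off-diagonal entries), not $-t$, on its diagonal; its determinant is still a unit, so the conclusion is unaffected.
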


In Lemma~\ref{l:alex0} and Proposition~\ref{p:highalex} the exponents of the monomials $(t-\lambda)$ depended
on the sums $p^k_\lambda(+1)+p^k_\lambda(-1)$. This, together with Lemma~\ref{l:symSei}, explains
in this terminology,
why the higher Alexander polynomials of a link and its mirror are the same.

\subsection{Rational Nakanishi index}
We begin with recalling the definition of the Nakanishi index (see e.g. \cite[Section~5.4]{Kaw-book}).
Let $\Lambda=\mathbb{Z}[t,t^{-1}]$ be a ring of Laurent polynomials with integer coefficient and
$\Lambda_{\mathbb{Q}}=\mathbb{Q}[t,t^{-1}]$. For a knot $K$, set $X=S^3\setminus K$ and let us consider
the Alexander module of $K$, i.e. the homology group of $X$
\[H=H_1(X;\Lambda)\]
with coefficients in $\Lambda$. This group can be regarded as the homology group of the universal abelian
cover of $X$. It has a natural structure of a $\Lambda$ module, where $t$ and $t^{-1}$ are deck
transformations.
\begin{definition}
A \emph{square presentation matrix} for $H$ is a square matrix $A$ with entries in $\Lambda$
such that $H=\Lambda^n/A\Lambda^n$, where $n$ is the size
of $A$. The \emph{Nakanishi index} $n(K)$ is the minimal size of a square presentation matrix of the module $H$.
\end{definition}
Since we are allowed to perform row operations on a square presentation matrix and,
independently, column operations, we can
always assume that $A$ is diagonal.

It is well known \cite[Proposition~5.4.1]{Kaw-book},
that if $S$ is a Seifert matrix of $K$, then $tS-S^T$ is a square presentation matrix for $H$.
However,  in
general, its size is not minimal possible. For example, for all torus knots $n(K)=1$.

 We show a relationship between
our primitive  numbers and the Nakanishi index defined over rational numbers instead of integers.
\begin{definition}
The \emph{rational Nakanishi index} $n_{\mathbb{Q}}(K)$ is a minimal size of a
square matrix $A_{\mathbb{Q}}$ with entries in $\Lambda_{\mathbb{Q}}$ such
that
\[H\otimes\mathbb{Q}=\Lambda^n_{\mathbb{Q}}/A_{\mathbb{Q}}\Lambda_{\mathbb{Q}}^n.\]
\end{definition}
Obviously we have $n(K)\ge n_{\mathbb{Q}}(K)$.
In \cite{Nak} is proved that $n(K)$ is a lower bound for the unknotting number, hence
$n_{\mathbb{Q}}(K)$ is a lower bound for it, too. Moreover, $n_{\mathbb{Q}}(K)$ is related to the Alexander polynomials in a
following way.

\begin{proposition}\label{prop:nak1}
If $\Delta_0(K),\dots,\Delta_n(K)$ are higher order Alexander
polynomials with $\Delta_0$ the ordinary Alexander polynomial, then
\[n_{\mathbb{Q}}=\min\{k\colon\Delta_k(K)\equiv 1\}.\]
In particular $n_{\mathbb{Q}}$ is the maximal number of
Jordan blocks of the monodromy matrix with the same eigenvalue
\begin{equation}\label{eq:nqdef}
n_{\mathbb{Q}}=\max_\lambda r(\lambda)=\max\left(\max_{|\lambda|=1}
\sum_{k,u}p^k_{\lambda}(u),\max_{0<|\lambda|<1}\sum_k q^k_\lambda\right).
\end{equation}
\end{proposition}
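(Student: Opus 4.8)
The plan is to translate the statement into the structure theory of the Alexander module $M:=H\otimes\mathbb{Q}$ over the principal ideal domain $\Lambda_{\mathbb{Q}}=\mathbb{Q}[t,t^{-1}]$. First I would note that for a knot $\Delta_0=\Delta_K\not\equiv 0$, so by Lemma~\ref{l:alex0} the zero block $S_0$ is empty and $S=S_\nd$ is non-degenerate; hence the square matrix $S-tS^T=S^T(h-tI)$ presents $M$ over $\Lambda_{\mathbb{Q}}$, and $M$ is a finitely generated torsion $\Lambda_{\mathbb{Q}}$-module whose invariant factors coincide with those of the $\mathbb{Q}$-linear endomorphism $h$. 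Write $M\cong\bigoplus_{i=1}^{r}\Lambda_{\mathbb{Q}}/(p_i)$ for its invariant factor decomposition, with $p_1\mid\cdots\mid p_r$ non-units and nonzero (the latter because $M$ is torsion).

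The first step is to identify $n_{\mathbb{Q}}(K)$ with $r$. For $n_{\mathbb{Q}}(K)\le r$ one presents $M$ by the $r\times r$ diagonal matrix $\mathrm{diag}(p_1,\dots,p_r)$. For $n_{\mathbb{Q}}(K)\ge r$ one observes that an $n\times n$ square presentation matrix $A$ of $M$ exhibits $M$ as a quotient of $\Lambda_{\mathbb{Q}}^n$; since $M$ is torsion one must have $\det A\ne 0$, so the Smith normal form of $A$ is $\mathrm{diag}(d_1,\dots,d_n)$ with all $d_i\ne 0$, and comparing with the invariant factor decomposition shows that the number of non-unit $d_i$ equals $r$, whence $n\ge r$. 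Thus $n_{\mathbb{Q}}(K)=r$, the minimal number of $\Lambda_{\mathbb{Q}}$-generators of $M$.

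Next I would match $r$ with $\min\{k\colon\Delta_k\equiv 1\}$. The $\Delta_k$ generate the elementary ideals $E_k(M)$, which are independent of the chosen presentation; evaluating them on $\mathrm{diag}(p_1,\dots,p_r)$ gives $E_k=(p_1p_2\cdots p_{r-k})$ for $0\le k<r$ and $E_k=(1)$ for $k\ge r$. A nonempty product of non-units in a PID is a non-unit, so $\Delta_k\equiv 1$ precisely when $k\ge r$, which is the first displayed equality. (Alternatively this follows from Proposition~\ref{p:highalex}: the multiplicity of a root $\mu$ in $\Delta_k$ is a sum of $r(\mu)-k$ positive integers when $k<r(\mu)$, and $0$ otherwise, so $\Delta_k$ has a root at all iff $k<\max_\mu r(\mu)$; here one also uses that $\Delta_{k+1}\mid\Delta_k$ and $\Delta_0\ne 0$, hence $\Delta_k\not\equiv 0$ for every $k$.)

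It remains to compute $r$ from the H--numbers. Applying the Chinese Remainder Theorem to the invariant factors shows that $r$ equals the largest number of primary blocks of $h$ attached to a single $\mathbb{Q}$--irreducible polynomial; since Galois--conjugate eigenvalues carry equally many Jordan blocks, this is the largest number of Jordan blocks of $h$ sharing one eigenvalue, i.e. $r=\max_\lambda r(\lambda)$. Reading the Jordan structure of $h$ off the decomposition of $\mv_L$ provided by Proposition~\ref{p:classification}, each summand $\mw^k_\lambda(u)$ with $\lambda\in S^1$ contributes a single Jordan block with eigenvalue $\lambda$ (see Examples~\ref{ex:mv2k} and~\ref{e:mvlambda} and Lemma~\ref{l:mvtilde}), while each $\mv^{2k}_\lambda$ with $0<|\lambda|<1$ contributes one block with eigenvalue $\lambda$ and one with eigenvalue $1/\bar\lambda$. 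Hence $r(\lambda)=\sum_{k,u}p^k_\lambda(u)$ for $\lambda\in S^1$ and $r(\lambda)=\sum_k q^k_\lambda$ for $0<|\lambda|<1$, which is exactly \eqref{eq:nqdef}. The only genuinely delicate point is the lower bound $n_{\mathbb{Q}}(K)\ge r$ — that a torsion module over a PID cannot be presented by a square matrix of size below its number of invariant factors — together with keeping the indexing of the elementary ideals $E_k$ consistent with Definition~\ref{d:alek}; everything else is bookkeeping over the list of indecomposable pieces.
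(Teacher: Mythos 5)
Your proof is correct and takes essentially the same route as the paper: the lower bound $n_{\mathbb{Q}}\ge\min\{k:\Delta_k\equiv 1\}$ comes from the presentation-independence of the elementary ideals (you phrase it via Smith normal form and invariant factors, the paper directly via minors of an arbitrary square presentation), the upper bound comes from the diagonal presentation by invariant factors, which up to units is exactly the paper's matrix $\mathrm{diag}(\Delta_{l-1}/\Delta_l)$, and \eqref{eq:nqdef} is then read off from the Jordan block structure, i.e.\ from Proposition~\ref{p:highalex}. The only cosmetic difference is your explicit use of the structure theorem over the PID $\Lambda_{\mathbb{Q}}$ and the Galois-conjugation remark to pass from $\mathbb{Q}$-irreducible factors to complex eigenvalues, which the paper leaves implicit.
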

\begin{proof}
First of all observe that given a square presentation matrix $A_{\mathbb{Q}}$ of size $n$, $\Delta_l(K)$
is the generator of ideal spanned by all $(n-l)\times(n-l)$ minors of $l$ (see Section~\ref{sub:higherAlex}).
Hence, if the $l-$th Alexander polynomial $\Delta_l$ is non-trivial, it follows that the size of $A_{\mathbb{Q}}$ is
at least $l$.

Conversely, if $\Delta_{k-1}\not\equiv 1$ and $\Delta_k\equiv 1$ we may define $A_{\mathbb{Q}}$ to be a diagonal $k\times k$ matrix with
$\Delta_{l-1}/\Delta_{l}$
on the $(l,l)$-th place. Then $\Lambda_{\mathbb{Q}}^k/A_{\mathbb{Q}}\Lambda_{\mathbb{Q}}^k$ is
easily
seen to be
isomorphic as a $\Lambda_{\mathbb{Q}}$-module
to $\Lambda_{\mathbb{Q}}^n/(tS-S^T)$.

Equation~\eqref{eq:nqdef} follows now from  Proposition~\ref{p:highalex}.
\end{proof}

\subsection{Signatures}
Besides Alexander polynomials, the Tristram--Levine signatures can also be computed
from the H--numbers.
We begin by recalling their definition.
\begin{definition}
Let $L$ be a link and $S$ a Seifert matrix of $L$.
The \emph{Tristram--Levine signature} (or the \emph{signature function}) is the function associating to each $\zeta\in S^1\setminus\{1\}$ the
signature $\sigma(\zeta)$ of the Hermitian form given by
\begin{equation}\label{eq:sigform}
M_S(\zeta):=(1-\zeta)S+(1-\bar{\zeta})S^T.
\end{equation}
The \emph{nullity} $n(\zeta)$ is the nullity of the above form (i.e. $\dim\ker M_S(\zeta)$).
\end{definition}
\begin{remark}
Some authors,
like \cite[Definition~3.11]{Mur}, define $n(\zeta)$ as the nullity increased by $1$. It is merely a matter of convention, we stick to
the notation we find more common.
\end{remark}
Clearly, in the definition of $\sigma$ (but not $n(\zeta)$) one can replace $S$ by $S_\nd$. Hence, in the sequel, for the simplicity of the notations, $S$
will denote $S_\nd$. Then, $M_S(\zeta)$ equals
\begin{equation}\label{eq:ms}
S((\zeta\bar{\zeta}-\zeta)I+(1-\bar{\zeta})S^{-1}S^T))=
(1-\bar{\zeta})S\cdot (h^{-1}-\zeta I).\end{equation}

It is not hard to express these signatures by H--numbers:
 we compute the signature function associated with each  irreducible simple HVS
and then we use the additivity of signatures.
Notice that for non--real matrices $S$, $M_S(\zeta)$ in (\ref{eq:sigform}) should be replaced by
$(1-\zeta)S+(1-\bar{\zeta})\bar{S}^T$.

\begin{lemma}\label{l:m2ksig}
Let $V$ be the variation operator  of
 $\mv_\lambda^{2k}$ (see Example~\ref{ex:mv2k}). Let $S=(\bar{V}^T)^{-1}$.
Then the signature of  $M_S(\zeta)$
is zero and the form is non-degenerate for any $\zeta$.
\end{lemma}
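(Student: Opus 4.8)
The plan is to compute $M_S(\zeta)$ explicitly using the block form of $\mv^{2k}_\lambda$ and show it is hyperbolic, i.e. conjugate to a block-antidiagonal Hermitian form with zero blocks on the diagonal. Recall from Example~\ref{ex:mv2k} that the variation operator is
\[
V=\left(\begin{matrix}0&\varepsilon(\lambda J_k-I)\\ \tfrac{1}{\bar\lambda}{J_k^*}^{-1}-I&0\end{matrix}\right),
\]
with $\varepsilon=-1$ in our setting. Since $V$ is block-antidiagonal, so is $V^{-1}$, and hence so is $S=(\bar V^T)^{-1}=(\overline{V}^{*})^{-1}$: write $S=\left(\begin{matrix}0&C\\ D&0\end{matrix}\right)$ for suitable invertible $k\times k$ blocks $C,D$ (which are, up to conjugation and scalars, powers of Jordan-type matrices in $\lambda$ and $1/\bar\lambda$; their precise form will not matter). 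Note $S^T=\left(\begin{matrix}0&D^T\\ C^T&0\end{matrix}\right)$, and also $\overline{S}^{T}=\left(\begin{matrix}0&\overline{D}^{T}\\ \overline{C}^{T}&0\end{matrix}\right)$.

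First I would assemble $M_S(\zeta)$ in the non-real form $(1-\zeta)S+(1-\bar\zeta)\overline{S}^{T}$ flagged just before the lemma. This gives
\[
M_S(\zeta)=\left(\begin{matrix}0&(1-\zeta)C+(1-\bar\zeta)\overline{D}^{T}\\ (1-\zeta)D+(1-\bar\zeta)\overline{C}^{T}&0\end{matrix}\right).
\]
The off-diagonal blocks are adjoint to each other, as they must be for a Hermitian form, so this is automatically an $\varepsilon'$-Hermitian form of the shape $\left(\begin{smallmatrix}0&B\\ \overline{B}^{T}&0\end{smallmatrix}\right)$. The key point is then that $B=(1-\zeta)C+(1-\bar\zeta)\overline{D}^{T}$ is invertible for every $\zeta\in S^1\setminus\{1\}$: once that is established, a standard fact about Hermitian forms of block-antidiagonal shape with an invertible off-diagonal block shows immediately that the form is non-degenerate with signature $0$ (it is a sum of $k$ hyperbolic planes; concretely, conjugating by $\left(\begin{smallmatrix}I&0\\ 0&B^{-*}\end{smallmatrix}\right)$ and then by $\tfrac1{\sqrt2}\left(\begin{smallmatrix}I&I\\ I&-I\end{smallmatrix}\right)$ diagonalizes it into equally many $+1$'s and $-1$'s).

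The main obstacle, therefore, is verifying the invertibility of $B$ for all $\zeta\in S^1\setminus\{1\}$; equivalently, using the identity \eqref{eq:ms} rewritten for the non-real case, showing that $h^{-1}-\zeta I$ restricted to the relevant block is invertible, i.e. that $\zeta$ is never an eigenvalue of (this block of) the monodromy $h$. But by Example~\ref{ex:mv2k} the monodromy of $\mv^{2k}_\lambda$ has eigenvalues $\lambda$ and $1/\bar\lambda$ only, and since $0<|\lambda|<1$ neither lies on $S^1$; hence $h^{-1}-\zeta I$ is invertible for all $\zeta\in S^1$, $S$ is invertible, and so $M_S(\zeta)$ is non-degenerate for every such $\zeta$. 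Combining this with the hyperbolic structure from the block-antidiagonal shape yields $\sigma(\zeta)=0$, which is the assertion. The only genuinely computational step — reading off the precise blocks $C,D$ from $S=(\bar V^T)^{-1}$ — can be bypassed entirely, since we only ever need the block-antidiagonal shape and the eigenvalue location of $h$, both of which are immediate from the stated form of $\mv^{2k}_\lambda$.
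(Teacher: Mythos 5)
Your proof is correct and follows essentially the same route as the paper: non-degeneracy is read off from the factorization \eqref{eq:ms} together with the fact that the eigenvalues $\lambda,1/\bar\lambda$ of $h$ lie off the unit circle, and the vanishing of the signature comes from the block-antidiagonal (hyperbolic) shape $\left(\begin{smallmatrix}0&B\\ \bar B^{T}&0\end{smallmatrix}\right)$ with $B$ invertible. The only difference is that you spell out the elementary linear algebra (the explicit congruence to $\pm I$ blocks) that the paper leaves implicit.
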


\begin{proof}  The non--degeneracy  follows from (\ref{eq:ms}).
For vanishing of the signature notice
 that $M_S(\zeta)$ has the block form $\left(\begin{matrix}0&A\\\bar{A}^T&0\end{matrix}\right)$ with
 $A$  non--degenerate. Hence the signature is zero by
elementary linear algebra.
\end{proof}

The case of HVS $\mw^k_\lambda(u)$ for $|\lambda|=1$
is slightly more complicated.

\begin{lemma}\label{l:null}
Set $S=(\bar{V}^T)^{-1}$, where $V$  is the variation operator  of  $\mw_\lambda^k(u)$. Then the form $M_S(\zeta)$
is non-degenerate for all $\zeta\neq \bar{\lambda}$. If $\zeta=\bar{\lambda}$ then it has a
one--dimensional kernel. In particular,
 the nullity   of the link $L$ is equal to 
\[n(\zeta)=\sum_{k,u}p^k_{\bar{\zeta}}(u)+\dim S_0.\]
\end{lemma}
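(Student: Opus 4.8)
\textbf{Plan of proof for Lemma~\ref{l:null}.} The strategy is the same as for Lemma~\ref{l:m2ksig}: compute the form $M_S(\zeta)$ on each indecomposable simple HVS of the form $\mw^k_\lambda(u)$ (with $|\lambda|=1$), determine when it is degenerate and what the kernel is, and then sum over the summands in the decomposition~\eqref{eq:mvsum}, using additivity of both nullity and of the degenerate summand $S_0$. First I would invoke~\eqref{eq:ms}: since $M_S(\zeta)=(1-\bar\zeta)S\cdot(h^{-1}-\zeta I)$ with $S=(\bar V^T)^{-1}$ invertible and $\zeta\neq 1$, the rank of $M_S(\zeta)$ equals the rank of $h^{-1}-\zeta I$, so $M_S(\zeta)$ is non-degenerate precisely when $\zeta^{-1}$ is not an eigenvalue of $h^{-1}$, i.e. when $\zeta\neq\lambda^{-1}=\bar\lambda$ (recall $|\lambda|=1$). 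When $\zeta=\bar\lambda$, the monodromy operator is $h=\lambda J_k$, a single Jordan block, so $h^{-1}-\bar\zeta I=h^{-1}-\lambda^{-1}I$ has exactly a one-dimensional kernel; hence $\dim\ker M_S(\bar\lambda)=1$. This handles the $\mw^k_\lambda(u)$ blocks. For the blocks $\mv^{2k}_\mu$ with $0<|\mu|<1$, Lemma~\ref{l:m2ksig} already tells us $M_S(\zeta)$ is non-degenerate for every $\zeta\in S^1$, so they contribute nothing to the nullity.

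Next I would assemble the global count. Writing $\mv_L$ according to~\eqref{eq:mvsum}, the nondegenerate part $S_\nd$ decomposes as a direct sum of Seifert-type blocks dual to the $\mw^k_\lambda(u)$ and $\mv^{2k}_\mu$ pieces, and $M_{S_\nd}(\zeta)$ is the orthogonal direct sum of the corresponding forms. By the two computations above, a block $\mw^k_\lambda(u)$ contributes $1$ to the nullity exactly when $\lambda=\bar\zeta$ and $0$ otherwise, while the $\mv^{2k}_\mu$ blocks contribute $0$. Therefore $\dim\ker M_{S_\nd}(\zeta)=\sum_{k,u}p^k_{\bar\zeta}(u)$. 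Finally, since the full Seifert form is $S=S_\nd\oplus S_0$ with $S_0$ a zero matrix (Proposition~\ref{prop:split}), one has $M_S(\zeta)=M_{S_\nd}(\zeta)\oplus 0_{\dim S_0}$, so the nullity of the link's form picks up an extra $\dim S_0$, giving $n(\zeta)=\sum_{k,u}p^k_{\bar\zeta}(u)+\dim S_0$ as claimed.

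\textbf{Where the difficulty lies.} The linear-algebra skeleton — rank of $M_S(\zeta)$ equals rank of $h^{-1}-\zeta I$, and the kernel dimension of $\lambda J_k - \text{(scalar)}$ being $1$ — is essentially immediate from~\eqref{eq:ms}. The one point that deserves care is the claim that the kernel of $M_S(\bar\lambda)$ is \emph{exactly} one-dimensional (not just at least one-dimensional): this uses crucially that $h$ restricted to the $\lambda$-part of a single $\mw^k_\lambda(u)$ is a \emph{single} Jordan block, so that $\dim\ker(h^{-1}-\lambda^{-1}I)=1$ regardless of $k$. A second subtlety, worth a sentence, is that $M_S(\zeta)$ must be taken to be the Hermitian form $(1-\zeta)S+(1-\bar\zeta)\bar S^T$ (as noted before the lemma) since for these building blocks $S$ is genuinely non-real; one should check that formula~\eqref{eq:ms} and the rank computation go through verbatim in the Hermitian (non-real) setting, which they do because all that is used is invertibility of $S$ and the identity $S^{-1}\bar S^T = h^{-1}$ coming from $h=(S^T)^{-1}S$ transported through conjugation. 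Modulo these remarks the proof is a direct block-by-block computation plus additivity.
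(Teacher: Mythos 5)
Your argument is correct and follows essentially the same route as the paper: the non-degeneracy and one-dimensional-kernel statements are read off from \eqref{eq:ms} (the rank of $M_S(\zeta)$ equals that of $h^{-1}-\zeta I$, a single Jordan block), and the nullity formula comes from additivity over the summands of \eqref{eq:mvsum} (with the $\mv^{2k}_\mu$ blocks contributing nothing, by Lemma~\ref{l:m2ksig}) together with $\dim\ker M_S-\dim\ker M_{S_\nd}=\dim S_0$; the paper's proof is simply a terser version of exactly this. One small typo: at $\zeta=\bar\lambda$ the relevant matrix is $h^{-1}-\zeta I=h^{-1}-\lambda^{-1}I$, not $h^{-1}-\bar\zeta I$, though your conclusion is unaffected.
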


\begin{proof} The first part follows from \eqref{eq:ms}. To show the formula for $n(\zeta)$ it is enough to observe that if we decompose $S=S_0\oplus S_\nd$,
and write $M_S$ and $M_{S_\nd}$ for corresponding matrices \eqref{eq:sigform}, then $\dim\ker M_S-\dim\ker M_{S_\nd}=\dim S_0$.
\end{proof}

\noindent The next result is computational. To formulate it  we need the next

\begin{convention}
Let $\alpha,\beta\in S^1$. We say that
\[\alpha<\beta\]
if $\alpha=e^{2\pi i x}$, $\beta=e^{2\pi i y}$ with $x,y\in[0,1)$ and $x<y$.
\end{convention}

\begin{proposition}\label{p:signatures}
Let $L$ be a link, and consider the primitive numbers of
 the variation structure $\mv_L$ associated with its Seifert for as above.

Let $\zeta\in S^1\setminus\{1\}$. Then the Tristram--Levine signature of $L$ is equal to
\[
\sigma(\zeta)=\sigma(\bar{\zeta})=
-\sum_{\substack{\lambda<\zeta\\k\text{ odd}\\u=\pm 1}}up^k_\lambda(u)+
\sum_{\substack{\lambda>\zeta\\k\text{ odd}\\u=\pm 1}}up^k_\lambda(u)+\sum_{\substack{k\text{ even}\\u=\pm 1}}up^k_{\zeta}(u).
\]
\end{proposition}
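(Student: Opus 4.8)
The plan is to reduce the computation of $\sigma(\zeta)$ to a sum over the indecomposable summands of $\mv_L$, using the additivity of the signature under direct sums of Hermitian forms together with the decomposition $S_\nd = \bigoplus$ of the (non-degenerate part of the) Seifert form dictated by Proposition~\ref{p:classification}. By Lemma~\ref{l:m2ksig} the blocks $\mv^{2k}_\lambda$ with $0<|\lambda|<1$ contribute nothing, so only the blocks $\mw^k_\lambda(u)$ with $|\lambda|=1$ matter. Thus it suffices to compute, for each fixed $\lambda\in S^1$, each $k\ge 1$ and each sign $u=\pm1$, the signature of $M_S(\zeta)$ where $S=(\bar V^T)^{-1}$ and $V$ is the variation operator of $\mw^k_\lambda(u)$; call this local contribution $\sigma_{k,\lambda,u}(\zeta)$. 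The claimed formula then reads $\sigma(\zeta)=\sum_{k,\lambda,u} p^k_\lambda(u)\,\sigma_{k,\lambda,u}(\zeta)$, and comparing with the stated answer we must show $\sigma_{k,\lambda,u}(\zeta)=0$ for $k$ even, while for $k$ odd $\sigma_{k,\lambda,u}(\zeta)=-u$ if $\lambda<\zeta$, $\sigma_{k,\lambda,u}(\zeta)=u$ if $\lambda>\zeta$, and (the even-$k$ statement being subsumed, $k$ odd together with $\lambda=\zeta$) the extra term $+up^k_\zeta(u)$ for all $k$ — here one must be a little careful, rereading the formula: the last sum runs over \emph{even} $k$ only, so in fact for $k$ odd and $\lambda=\zeta$ the block is singular (Lemma~\ref{l:null}) and the two limiting values $\pm u$ average out, so its contribution to the (genuine) signature at $\zeta$ is $0$, while for $k$ even and $\lambda=\zeta$ the contribution is $u$.

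The core of the argument is therefore the local computation of the signature of the Hermitian form $M_S(\zeta)=(1-\bar\zeta)S\cdot(h^{-1}-\zeta I)$, cf.~\eqref{eq:ms}, on the $k$-dimensional space underlying $\mw^k_\lambda(u)$, where $h=\lambda J_k$. I would proceed by a continuity/eigenvalue-counting argument: the signature of the path $\zeta\mapsto M_S(\zeta)$ on $S^1$ is locally constant away from the single value $\zeta=\bar\lambda$ where, by Lemma~\ref{l:null}, exactly one eigenvalue of $M_S(\zeta)$ crosses zero. Hence $\sigma_{k,\lambda,u}$ is constant on each of the two arcs of $S^1\setminus\{\bar\lambda, 1\}$ (note $M_S$ is also singular at $\zeta=1$, excluded from the definition), and the jump across $\bar\lambda$ is $\pm2$ according to the sign of the derivative of the crossing eigenvalue — equivalently the sign of the $1\times1$ Hermitian form induced on $\ker M_S(\bar\lambda)$ by $\frac{d}{d\zeta}M_S(\zeta)$. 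One then pins down the two arc-values and the direction of the jump. For the arc-values it is cleanest to evaluate at convenient base-points, e.g.\ letting $\zeta\to 1$ (or $\zeta=-1$ when $\lambda\ne-1$) and using that $S$ and $S^T$ are then comparable so that $M_S(\zeta)$ is, up to a positive scalar, close to $S-S^T=b$, whose signature on $\mw^k_\lambda(u)$ is either $0$ (for $k$ even) or $\pm1$ with the sign governed by $u$ via Lemma~\ref{l:BB} / Lemma~\ref{l:mvtilde} — this is exactly the place the normalization conventions $(b^k_\pm)_{1,k}=\pm i^{-n^2-k+1}$ and $(V^{-1})_{i,k+1-i}=\pm(-1)^{i+1}i^{-n^2-k}$ enter and must be tracked. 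Knowing one arc-value and the magnitude $2$ of the jump, the other arc-value and the sign of the jump are forced by antisymmetry under $\zeta\leftrightarrow\bar\zeta$ (so $\sigma(\zeta)=\sigma(\bar\zeta)$, which also reads off the formula's left-hand equality) and by the symmetry $\mw^k_\lambda(u)$ vs.\ $\mw^k_{\bar\lambda}(u(-1)^{\cdots})$ from Lemma~\ref{l:symmetry}; alternatively one verifies the jump sign directly from the induced form on the kernel.

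I expect the main obstacle to be precisely this sign bookkeeping in the local model: computing $\sigma_{k,\lambda,u}(\zeta)$ with the correct dependence on $u$ requires diagonalizing (or at least determining the inertia of) the explicit Hermitian form $M_S(\zeta)$ built from $b^k_\pm$ and $J_k$, and the answer's sign hinges on the $i^{-n^2-k+1}$-type normalization constants, whose reality (for the relevant parities of $k$) and sign one has to check. A convenient technical device is that because $b$ is \emph{left diagonal} (Lemma~\ref{l:BB}), the form $M_S(\zeta)$ is, after clearing denominators, anti-triangular up to lower-order corrections, so its inertia is read off from its anti-diagonal — reducing the whole matter to a single $1\times1$ sign, namely that of $(b^k_\pm)_{1,k}$ times an explicit power of $(1-\zeta)$. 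I would organize the write-up as: (i) additivity reduces to the local case; (ii) the $\mv^{2k}_\lambda$ and $k$-even $\mw^k_\lambda(u)$ blocks contribute $0$ resp.\ $u\delta_{\lambda\zeta}$; (iii) for $k$ odd, locally-constant-plus-single-jump gives a two-valued answer $\pm u$ on the two arcs; (iv) the arc on which $\lambda<\zeta$ gives $-u$ and the arc $\lambda>\zeta$ gives $+u$, by evaluating at a base-point and invoking Lemma~\ref{l:symmetry} and $\sigma(\zeta)=\sigma(\bar\zeta)$. Summing over all summands with multiplicities $p^k_\lambda(u)$ yields the displayed formula.
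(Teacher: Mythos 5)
Your overall strategy --- additivity of the signature over the decomposition of Proposition~\ref{p:classification}, vanishing of the contribution of the blocks $\mv^{2k}_\lambda$ (Lemma~\ref{l:m2ksig}), and a block-by-block computation for $\mw^k_\lambda(u)$ --- is exactly the paper's; the paper's own argument is only a sketch which reads the inertia off the left-diagonal form $M_S(\zeta)$ via metabolic subspaces and the sign of $\det M_S(\zeta)$, where you propose a spectral-flow/base-point variant of the same local computation, so the difference in route is minor.

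The genuine gap is the $\lambda$ versus $\bar\lambda$ bookkeeping, which your plan does not resolve and which makes the per-block identity you set out to prove false as stated. By \eqref{eq:ms} and Lemma~\ref{l:null} (and by your own continuity argument) the signature of the single block $\mw^k_\lambda(u)$ is constant on the two arcs of $S^1\setminus\{1,\bar\lambda\}$, i.e.\ its jump sits at $\zeta=\bar\lambda$; but the values you aim to establish, $-u$ on $\{\zeta:\lambda<\zeta\}$ and $+u$ on $\{\zeta:\lambda>\zeta\}$, form a function jumping at $\zeta=\lambda$. For $\lambda\neq\pm1$ these cannot both hold: in the paper's trefoil example the summand $\mv^1_{\lambda_1}(-1)$ with $\lambda_1=e^{2\pi i\cdot 5/6}$ changes sign at $e^{2\pi i\cdot 1/6}=\bar\lambda_1$, not at $\lambda_1$. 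The displayed formula is nevertheless correct for a link, but only because the Seifert form is real: one must prove the correct per-block statement (jump at $\bar\lambda$, with the matching $u$-dependence) and then re-index the sum over blocks using $p^k_\lambda(u)=p^k_{\bar\lambda}(-u)$ for $k$ odd, $\lambda\neq1$, and $p^k_\lambda(u)=p^k_{\bar\lambda}(u)$ for $k$ even (Lemma~\ref{l:symSei}(a)). Your appeal to Lemma~\ref{l:symmetry} only to pin down the second arc-value and the jump sign does not repair the mismatch of the arcs themselves, so this re-indexing step is missing. (To be fair, the paper's sketch is blurred on the same point --- it asserts degeneracy at $\zeta=\lambda$, in tension with its own Lemma~\ref{l:null} --- but a complete write-up must settle it.) Two smaller points: at $\zeta=\bar\lambda$ the single vanishing eigenvalue need not cross zero --- for $k$ even it touches and returns, so the jump is $0$ while the value at the degenerate point is $\pm1$ --- and that value, which produces the even-$k$ term of the formula, is precisely what the continuity argument cannot give; there you do need the explicit anti-triangular computation with the sign normalizations of Lemmas~\ref{l:BB} and~\ref{l:mvtilde}, a step both you and the paper defer.
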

\begin{proof}[Sketch of proof]
By additivity of signatures under the direct sum, it is enough to prove the statement if $\mv_L=\mv_\lambda^k(u)$ for some $\lambda\in S^1$, $k\ge 1$
and $u=\pm 1$.
Let $V$ be the variation operator corresponding to $\mv_\lambda^k(u)$. By Lemma~\ref{l:BB}, $V$ is right diagonal (because $b^{-1}$ is right diagonal)
so the corresponding form $M_S(\zeta)$ (see \eqref{eq:sigform}) is left diagonal.

Assume that $\lambda\neq \zeta$. Then $M_S(\zeta)$ is non-degenerate. 
If $k$ is even, we deduce that $M_S(\zeta)$ has a $k/2$-dimensional metabolic subspace, so signature of $M_S$
is zero. If $k$ is odd, the metabolic subspace is $(k-1)/2$ dimensional, so the signature of $M_S$ is $\pm 1$, more precisely, it
is equal to $\sign\det M_S(\zeta)$ which can be explicitely computed.

If $\lambda=\zeta$, then $M_S(\zeta)$ is degenerate, we can easily compute that its kernel is one dimensional. 
If $k$ is odd, it follows that the signature is equal to zero. If
$k$ is even, the signature is $\pm 1$. The precise computation of the signature in this case requires much more effort 
(one can for instance compute $V$ explicitely from the definition) and will not be shown here.
\end{proof}

\vspace{2mm}

As a corollary, if $\zeta\in S^1$ is not an eigenvalue of monodromy, the signature $\sigma(\zeta)$ can be expressed in
terms of the (mod 2) spectrum alone.

\begin{corollary}\label{c:sig}
\begin{itemize}
\item[(a)] Let $Sp$ be the (mod 2)--spectrum of a variation structure $\mv_L$ (see Definition~\ref{d:mod2spec}).
Let  $\zeta=e^{2\pi i x}$, where $x\in (0,1)$.  Then
\[\sigma(\zeta)=-\#Sp\cap(x,x+1)+\#Sp\setminus[x,x+1]+
\sum_{\substack{k\text{ even}\\u=\pm 1}}up^k_{\zeta}(u).\]
In particular, if $\zeta $  is not
an eigenvalue of the monodromy $h$ then
\[\sigma(\zeta)=-\#Sp\cap(x,x+1)+\#Sp\setminus(x,x+1).\]
\item[(b)]
Let $ESp$ be the extended spectrum of the variation structure $\mv_L$ and let $\zeta=e^{2\pi ix}$, $x\in(0,1)$.
Let $H_x=(x,x+1)\times i\mathbb{R}$ and assume that $ESp\cap\partial H_x=\emptyset$.
Then 
\[\sigma(\zeta)=-\#ESp\cap H_x+\#ESp\setminus H_x.\]
\end{itemize}
\end{corollary}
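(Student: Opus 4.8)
The plan is to derive both parts from Proposition~\ref{p:signatures}, from the definition of the (extended) spectrum in Definition~\ref{d:mod2spec}, and from Lemma~\ref{l:ISp-nosig}; the real content is just translating between the ``position of an eigenvalue on $S^1$'' language of Proposition~\ref{p:signatures} and the ``position of a spectral number in $(0,2]$'' language of the spectrum. Both sides of the identities in (a) are additive over the decomposition of $\mv_L$ into indecomposable summands furnished by Proposition~\ref{p:classification}: the signature because signatures add under orthogonal direct sums, the counting quantities $\#Sp\cap(x,x+1)$ and $\#Sp\setminus[x,x+1]$ because $Sp$ is by construction the multiset union of the spectral contributions of the summands, and $\sum_{k\ \mathrm{even},\,u}u\,p^k_\zeta(u)$ visibly. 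Hence it suffices to check (a) for a single indecomposable block. The blocks $\mv^{2k}_\lambda$ with $0<|\lambda|<1$ contribute $0$ to $\sigma(\zeta)$ by Lemma~\ref{l:m2ksig}, contribute nothing to $Sp$, and have $p^k_\zeta(u)=0$, so there is nothing to check for them.

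So fix a block $\mw^k_\lambda(u)$, write $\lambda=e^{2\pi i\theta}$ with $\theta\in[0,1)$ and $\zeta=e^{2\pi ix}$ with $x\in(0,1)$. According to Definition~\ref{d:mod2spec}, this block contributes to $Sp$ exactly the two numbers $\alpha_1<\alpha_2$ in $(0,2]$ with $e^{2\pi i\alpha_j}=\lambda$, necessarily $\alpha_2=\alpha_1+1$, with multiplicities $\tfrac{k-uv_1}{2},\tfrac{k+uv_1}{2}$ if $k$ is odd and $\tfrac k2,\tfrac k2$ if $k$ is even, where $v_1=(-1)^{\lfloor\alpha_1\rfloor}$ (and $v_2=(-1)^{\lfloor\alpha_2\rfloor}=-v_1$); the case $\lambda=1$, with $\alpha_1=1$, $\alpha_2=2$, $v_1=-1$, is precisely the final sentence of Definition~\ref{d:mod2spec}. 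On the other hand Proposition~\ref{p:signatures} says this block contributes $-u$ to $\sigma(\zeta)$ if $k$ is odd and $\lambda<\zeta$, contributes $+u$ if $k$ is odd and $\lambda>\zeta$, contributes $u$ if $k$ is even and $\lambda=\zeta$, and contributes $0$ otherwise. I would then run through the three cases $\theta<x$ (which absorbs $\theta=0$, i.e. $\lambda=1$), $\theta=x$, and $\theta>x$; in each one I locate $\alpha_1,\alpha_2$ in the partition $(0,2]=(0,x)\cup\{x\}\cup(x,x+1)\cup\{x+1\}\cup(x+1,2]$, read off the signed count $-\#(\{\alpha_1,\alpha_2\}\cap(x,x+1))+\#(\{\alpha_1,\alpha_2\}\setminus[x,x+1])$ weighted by the above multiplicities, add $u$ when $k$ is even and $\lambda=\zeta$, and check that the result matches what Proposition~\ref{p:signatures} prescribes (the arithmetic closing because $\tfrac{k-uv_1}{2}+\tfrac{k+uv_1}{2}=k$ while $-\tfrac{k-uv_1}{2}+\tfrac{k+uv_1}{2}=uv_1$). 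Summing over blocks gives the first formula of (a). For the ``in particular'' clause, if $\zeta$ is not an eigenvalue of $h$ then no summand has $\lambda=\zeta$, so the correction term vanishes and neither $x$ nor $x+1$ lies in $Sp$ (each would force a summand with $\lambda=\zeta$), so $(x,x+1)$ and $[x,x+1]$ may be interchanged.

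For (b), I would first observe that the hypothesis $ESp\cap\partial H_x=\emptyset$ already forces $\zeta$ not to be an eigenvalue of $h$: any summand $\mw^k_\zeta(u)$ would place $k\ge1$ spectral points (its multiplicities sum to $k$) onto $\{x,x+1\}\subseteq\partial H_x$. Hence by the ``in particular'' part of (a), $\sigma(\zeta)=-\#Sp\cap(x,x+1)+\#Sp\setminus(x,x+1)$. Now write $ESp=Sp\cup ISp$; since $Sp\subset\mathbb{R}$ one has $Sp\cap H_x=Sp\cap(x,x+1)$ and $Sp\setminus H_x=Sp\setminus(x,x+1)$, while the hypothesis also gives $ISp\cap\partial H_x=\emptyset$, so Lemma~\ref{l:ISp-nosig} yields $\#ISp\cap H_x=\#ISp\setminus H_x$. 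Adding the vanishing quantity $-\#ISp\cap H_x+\#ISp\setminus H_x$ to the previous formula produces $\sigma(\zeta)=-\#ESp\cap H_x+\#ESp\setminus H_x$.

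The substantive input is already contained in Proposition~\ref{p:signatures} (whose even-block case the authors flag as genuinely laborious) and in Lemma~\ref{l:ISp-nosig}; the remaining difficulty is only bookkeeping — keeping half-open versus open intervals straight and, above all, tracking the sign $v_1=(-1)^{\lfloor\alpha_1\rfloor}$, which jumps between $\theta\in(0,1)$ and $\theta=0$.
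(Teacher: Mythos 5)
Your proposal is correct and matches the paper's (implicit) derivation: the corollary is stated without proof precisely because it is the straightforward translation of Proposition~\ref{p:signatures} into spectrum language via Definition~\ref{d:mod2spec}, with part (b) reduced to the ``in particular'' clause of (a) together with Lemma~\ref{l:ISp-nosig}, exactly as you argue. Your bookkeeping (multiplicities $\tfrac{k\mp uv_1}{2}$ summing to $k$ with signed difference $uv_1$, the sign jump at $\lambda=1$, and the observation that $ESp\cap\partial H_x=\emptyset$ rules out $\zeta$ as an eigenvalue) is the whole content, and it checks out.
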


\begin{remark}
(a) Corollary \ref{c:sig}(a)  can be compared  with \cite[Proposition 1]{Li}, where the signatures of the iterated
torus knots is  be computed. In fact, the spectrum of the torus knot $(k,l)$, or equivalently, the spectrum of the
singularity  $\{x^k+y^l=0\}$, is
\[Sp=\{\frac{i}{k}+\frac{j}{l},\,1\le i\le k-1,\,\,1\le j\le l-1\}.\]

(b) The equivariant signatures of any HVS were
 computed in \cite{Nem-real} and are expressible in
terms of H--numbers (primitive equivariant Hodge numbers). However, in general,
they are not expressible in terms of the spectrum alone.

For instance, for plane curve singularities, the  equivariant signature
$\sigma_{-1}$ cannot be determined, in general,  from the spectrum. This is the case
with  the Tristram--Levine signature $\sigma(-1)$ too. In fact, $\sigma(-1)$ is the signature
of $S+S^T$. If $S$ is the Seifert form of the plane curve singularity $f(x,y)$, then
 $\sigma(S+S^T)$  is the signature of the suspension surface singularity $f(x,y)+z^2$.
 For a pair of singularities with the same spectrum but with different
 $\sigma(-1)$ see e.g.  \cite[(6.10)]{Nem-real}.

This also shows that
 H--numbers of a link are not determined by Tristram--Levine signatures and orders of
higher Alexander polynomials alone. This can be exemplified by
a model  situation as follows. Take
some $\lambda\in S^1\setminus\{1\}$ and consider structures such that
\[p^3_\lambda(+1)=p^1_\lambda(-1)=1\text{ respectively } p^3_\lambda(-1)=p^1_\lambda(+1)=1,\]
and all other $p^k$'s for this $\lambda$ are zero. The two structures 
are different, but they provide
the same contribution to signatures, and orders of zeros of subsequent Alexander polynomials at
$t=\lambda$ are in both cases $4,1,0,\dots$.

(c) On the other hand, the higher Alexander polynomials with a set of `higher equivariant signatures'
determines the set of H--numbers. They are defines as follows, cf.
\cite[(4.4)]{Nem-real}.
Let $(U;b,h,V)$ be a variation structure, let $U_\lambda\subset U$ be the generalized $\lambda$--eigenspace
of $h$, and for each integer $k\geq 1$ consider $U^{(k)}_\lambda:=\mbox{ker}((h-\lambda I)^k|U_\lambda)$.
On $U^{(k)}_\lambda/U^{(k-1)}_\lambda$ one defines a $(\pm 1)$--hermitian form by
$B^{(k)}_\lambda(x,y)=B(x,\lambda^{1-k}(h-\lambda I)^{k-1}y)$, where $B(x,y)=b(x)(\bar{y})$.
Let $n^{(k)}_\lambda$ be the dimension of $U^{(k)}_\lambda$, while
$\sigma^{(k)}_\lambda$ the signature of $B^{(k)}_\lambda$. Then the collection of the integers
$\{n^{(k)}_\lambda\}_{k,\lambda}$ is equivalent with the collection of higher order Alexander polynomials,
while the collection of pairs of integers $\{n^{(k)}_\lambda, \, \sigma^{(k)}_\lambda\}_{k,\lambda}$
characterizes completely the variation structure (i.e the real Seifert form).
\end{remark}

\section{Some examples}
\subsection{}
Let us consider a (right-handed)  trefoil with non-degenerate Seifert matrix
\[S=\left(\begin{matrix} -1 & 0\\ -1 & -1 \end{matrix}\right).\]
The variation matrix $V=(S^T)^{-1}$
and monodromy matrix $h=V\cdot (V^T)^{-1}=V\cdot S$ are
\[V=\left(\begin{matrix} -1 & 1\\ 0 & -1 \end{matrix}\right) \ \ \ \ \
h=\left(\begin{matrix} 0 & -1\\ 1 & 1\end{matrix}\right).\]
The eigenvalues of $h$ are $\lambda_1=\frac{1}{2}-\frac{1}{2}i\sqrt{3}$ and $\lambda_2=\frac12+\frac12i\sqrt{3}$.
We need to diagonalise $h$. Let us put
\[A=-\frac{1}{i\sqrt[4]{3}}\left(\begin{matrix}  -\frac12-\frac12i\sqrt{3}&-1\\ \frac12-\frac12i\sqrt{3} & 1\end{matrix}\right).\]
Then  $AhA^{-1}$ is diagonal with diagonal entries $\lambda_1$ and $\lambda_2$, and
 \[AV\bar{A}^{T}=\frac{1}{\sqrt{3}}\left(\begin{matrix} -\frac32+i\frac12\sqrt{3} & 0\\ 0 & -\frac32-i\frac12\sqrt{3}\end{matrix}\right)=
\left(\begin{matrix} -i(\lambda_1-1)&0\\ 0 & i(\lambda_2-1)\end{matrix}\right).\]
Thus the HVS of a treefoil is
\[\mv=\mv^{1}_{\lambda_1}(-1)\oplus\mv^{1}_{\lambda_2}(+1).\]
The spectrum is $\{\frac56,\frac76\}$.

\subsection{}
Let us consider the knot $8_{20}$. We have by \cite{CL}:
\[S=\left(\begin{matrix} -1& -1& -1& -1\\ 0& 0& -1& -1\\ 0& -1& 0& -1\\  0& 0& -1& 0\end{matrix}\right),\,\,
V=(S^T)^{-1}=\left(\begin{matrix}-1& 0& 0& 0\\ 0& 1& 0& -1\\ 1& -1& 0& 0\\ 1& -1& -1& 1\end{matrix}\right).\]
And
\[h=V\cdot S=\left(\begin{matrix}1& 1& 1& 1\\ 0& 0& 0& -1\\ -1& -1& 0& 0\\ -1& 0& -1& 1\end{matrix}\right).\]
The monodromy $h$ has eigenvalues $\lambda_1=\frac{1}{2}-\frac{1}{2}i\sqrt{3}$ and $\lambda_2=\frac12+\frac12i\sqrt{3}$. It
has two Jordan blocks of size~2.  Let $A$ be such matrix that $AhA^{-1}$ is in the
Jordan form. E.g.: 
\[A=\left(\begin{matrix}
-1&2&-2&\frac32-\frac32\sqrt{3}i\\
\sqrt{3}i&\sqrt{3}i&-\frac32+\frac12\sqrt{3}i&0\\
1&-2&2&-\frac32-\frac32\sqrt{3}i\\
\sqrt{3}i&\sqrt{3}i&\frac32+\frac12\sqrt{3}i&0
\end{matrix}\right).\]

Then we have
\[W=AV\overline{A}^{T}=
\left(\begin{matrix} \frac32-\frac16i\sqrt{3}&\frac12-\frac12i\sqrt{3}&0&0\\ 1&0&0&0\\ 0&0&\frac32+\frac16i\sqrt{3}&
\frac12+\frac12i\sqrt{3}\\ 0&0&1&0
\end{matrix}\right).\]
Then $W_{12}=(-1)\cdot(\lambda_1-1)$, $W_{34}=(-1)\cdot(\lambda_2-1)$. Now the size of each Jordan block is $k=2$ and $i^k=-1$. Hence
both signs in the direct sum  decompositions are '+' and
\[\mv=\mv^{2}_{\lambda_1}(+1)\oplus\mv^{2}_{\lambda_2}(+1).\]
This is with the agreement with the fact that the Tristram--Levine signature of $W$ is zero, only it is $+1$ at $\zeta=\lambda_{1,2}$.
The knot $8_{20}$ is also reversible.

\subsection{}  Consider, for any $n\neq 0$, the following link

\begin{pspicture}(-6,-3)(6,3)
\psellipse[linewidth=1.8pt, fillcolor=lightgray, fillstyle=solid](0,0)(1.8,2.4)
\psellipse[linewidth=1.8pt, fillcolor=white, fillstyle=solid](0,0)(1,1.5)
\psframe[fillcolor=white, fillstyle=solid](0.2,-0.45)(3,0.45)
\rput(1.6,0){$n$ full twists}

\psellipticarc[arrowsize=8pt]{->}(0,0)(1.8,2.4){170}{190}
\psellipticarc[arrowsize=8pt]{<-}(0,0)(1,1.5){170}{190}
\end{pspicture}

This link represents two unlinks with linking number $n$.
The shaded part between the two strands forms a Seifert surface of genus $1$. The Seifert matrix in $(n)$. Hence
the variation
 structure is $\mvt^1_1(-1)$ for $n>0$ and $\mvt^1_1(+1)$ for $n<0$ and apart of that does not depend on $n$. 
Therefore we do not see the linking numbers,
or the {\it integral} Seifert form from the H--numbers.

\subsection{}
According to \cite{CL}, there are five knots with up to 12 crossings with the Alexander polynomial
\[1-4t+10t^2-16t^3+19t^4-16t^5+10t^6-4t^7+t^8=(t-\mu)^4(t-\bar{\mu})^4,\]
where $\mu=e^{\pi i/3}$.
These are $L_1=10_{99}$, $L_2=12_{n106}$, $L_3=12_{n508}$, $L_4=12_{n604}$ and $L_5=12_{n666}$.
Their monodromy matrices $(S^T)^{-1}S$ are respectively $h_1,h_2,h_3,h_4$ and $h_5$. Here we consider
 $h_1$ and $h_2$:
\begin{equation*}\label{eq:10-99}
h_1=\begin{pmatrix}
0 & 0 & -1 & 0 & 0 & 0 & 0 & 0\\
0 & 0 & 0 & 0 & 0 & 0 & -1 & 0\\
2 & 1 & 2 & -1 & 0 & -1 & 1 & 0\\
0 & 1 & 0 & 0 & 0 & 0 & 1 & -1\\
-1 & -1 & -1 & 1 & 1 & 1 & -1 & 1\\
1 & 0 & 1 & -1 & 0 & 0 & 0 & 0\\
-1 & 1 & 0 & 0 & 0 & 0 & 1 & -1\\
0 & 0 & 0 & 0 & -1 & 0 & 0 & 0
\end{pmatrix}
\end{equation*}
\begin{equation*}\label{eq:12-106}
h_2=\begin{pmatrix}0&-1&0&0&0&0&0&0\\ 1&1&-1&0&-1&-1&-1&-1\\ 1&1&1&1&0&0&0& 0\\ 0&0&-1&1&-1&-1&-1&-1\\ 0&0&0&0&0&0&0&-1\\
-1&-1&0&-1&1&1&1&1 \\ 0&0&0&0&-1&0&0&0\\ 1&1&0&1&0&-1&0&0
\end{pmatrix}
\end{equation*}
The matrix $h_{1}$ has four Jordan blocks of size two, we easily get in this case
\[p^2_\mu(+1)=p^2_\mu(-1)=p^2_{\bar{\mu}}(+1)=p^2_{\bar{\mu}}(-1)=1.\]
The matrix $h_{2}$ has a single Jordan block for each eigenvalue. We have
\[p^4_\mu(-1)=p^4_{\bar{\mu}}(-1)=1.\]

The matrices $h_3$ and $h_5$ have two Jordan blocks of size one and two of size~3. We conclude that
\[p^1_\mu(-1)=p^3_\mu(-1)=p^1_{\bar\mu}(+1)=p^3_{\bar\mu}(+1)=1.\]
In case of $h_4$ we have similarly two Jordan blocks of size one and two of size~3. We can compute that
\[p^1_\mu(+1)=p^3_\mu(+1)=p^1_{\bar\mu}(-1)=p^3_{\bar\mu}(-1)=1.\]

Hence, only the knots $12_{n508}$ and $12_{n666}$ are undistinguishable by  H--numbers.

Observe that if we take a connected sum of three left--handed treefoils and one right--handed one, then
the Alexander polynomial and the signatures for $\zeta\neq \mu,\bar{\mu}$, shall be the same as in the case
of the knots $12_{n508}$ and $12_{n666}$, but the Jordan block structure is different.

\section{Skein relations for H--numbers}
\subsection{Signatures}\label{sec:skein1}
Although we do not have a precise Skein relation for  H--numbers, there are several constrains
from them, coming mostly from relations for classical invariants.

As usually in skein relation,
we consider the three links $L_0$, $L_+$ and $L_-$.
Their Seifert matrices $S_0$, $S_+$ and $S_-$ can
be chosen (see \cite[Proof of Theorem~7.10]{Kau}) so that $S_+$ and $S_-$ are $(n+1)\times(n+1)$ matrices,
such that
\begin{equation}\label{eq:SplusSminus}
S_+-S_-=
\begin{pmatrix}
&&&\rnode{AU}{}&0\\
&\scalebox{1.5}{\underline{$\mathbf{0}$}}&&&\vdots\\
&&&&0\\
\rnode{AL}{}&&&&\rnode{AR}{}\\
0&\ldots&0&\rnode{AD}{}&1
\end{pmatrix},
\end{equation}
\ncline[nodesep=-0.5em]{AU}{AD}
\ncline[nodesep=-0.5em]{AR}{AL}
where $\underline{\mathbf{0}}$ denotes an $n\times n$ zero matrix.
Moreover $S_0$ arises from $S_+$ by deleting the $(n+1)$-st row and $(n+1)$-st column.

The following fact is classical (see \cite[Lemma~12.3.4]{Kaw-book} or \cite{Mur}).
\begin{proposition}\label{p:sigskein}
For any $\zeta\in S^1\setminus\{1\}$, we have the following bounds
\[|\sigma_{L_\pm}(\zeta)-\sigma_{L_0}(\zeta)|+|n_{L_\pm}(\zeta)-n_{L_0}(\zeta)|\le 1.\]
\end{proposition}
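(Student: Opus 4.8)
The plan is to reduce Proposition~\ref{p:sigskein} to a purely linear-algebraic statement about how the signature and nullity of a Hermitian form change when the form is perturbed by a rank-one Hermitian matrix, evaluated along the particular one-parameter family arising from the skein triple. First I would fix $\zeta\in S^1\setminus\{1\}$ and write $M_+(\zeta)=M_{S_+}(\zeta)$, $M_-(\zeta)=M_{S_-}(\zeta)$, $M_0(\zeta)=M_{S_0}(\zeta)$ for the Hermitian forms from \eqref{eq:sigform}. From the normal form \eqref{eq:SplusSminus} one computes directly that $M_+(\zeta)-M_-(\zeta)=(1-\zeta)(S_+-S_-)+(1-\bar\zeta)(S_+-S_-)^T$ is the $(n+1)\times(n+1)$ Hermitian matrix whose only nonzero entry is the bottom-right corner, equal to $(1-\zeta)+(1-\bar\zeta)=2-2\re\zeta>0$ (here using $\zeta\neq 1$). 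Thus $M_+(\zeta)=M_-(\zeta)+c\,e_{n+1}e_{n+1}^T$ with $c>0$ a positive real scalar, i.e. $M_+$ is obtained from $M_-$ by adding a positive semidefinite rank-one form.

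The second step is the standard interlacing/monotonicity lemma for Hermitian forms: if $B=A+cvv^*$ with $c>0$, then $\sigma(B)\in\{\sigma(A),\sigma(A)+2\}$ and correspondingly $|n(B)-n(A)|\le 1$, and more precisely the pair $(\sigma,n)$ can move only by $(0,0)$, $(+2,0)$, $(0,\pm1)$, or $(+2,\mp1)$ — in every case $|\sigma(B)-\sigma(A)|+|n(B)-n(A)|\le 2$, but the two summands cannot both equal their maximum simultaneously, and in fact one checks $|\sigma(B)-\sigma(A)|+|n(B)-n(A)|\le 2$ with the refinement that if the signature jumps by $2$ the nullity changes by at most... — so I will instead route the proof through $M_0$. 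The key observation is that $M_0(\zeta)$ is exactly the $n\times n$ principal submatrix of $M_+(\zeta)$ (and of $M_-(\zeta)$) obtained by deleting the last row and column, since $S_0$ is obtained from $S_+$ by deleting the last row and column and the operation $S\mapsto (1-\zeta)S+(1-\bar\zeta)S^T$ commutes with taking principal submatrices. By the Cauchy interlacing theorem applied to the Hermitian form $M_+(\zeta)$ and its codimension-one principal submatrix $M_0(\zeta)$, one gets $|\sigma_{L_+}(\zeta)-\sigma_{L_0}(\zeta)|+|n_{L_+}(\zeta)-n_{L_0}(\zeta)|\le 1$; the same interlacing applied to $M_-(\zeta)$ gives the bound for $L_-$.

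Concretely, the interlacing step is: if $A$ is an $(n+1)\times(n+1)$ Hermitian matrix and $A'$ its $n\times n$ principal submatrix, then the eigenvalue sequences interlace, from which $|\sigma(A)-\sigma(A')|\le 1$ and $|n(A)-n(A')|\le 1$, and moreover $(\sigma(A)-\sigma(A'),\,n(A)-n(A'))$ lies in $\{(\pm1,\mp1),(\pm1,0),(0,0),(0,\pm 1)\}$... wait — one needs the sharper fact that $|\sigma(A)-\sigma(A')|+|n(A)-n(A')|\le 1$, which follows because bordering a Hermitian matrix by one row/column adds exactly one eigenvalue's worth of "room": either the new eigenvalue is nonzero (then a $\pm$ is added to the signature count, nullity unchanged among the interlaced ones, net $\le 1$) or it sits at $0$ and the interlaced eigenvalues can at worst have one sign-crossing absorbed; the precise bookkeeping is the content of \cite[Lemma~12.3.4]{Kaw-book}. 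I would cite that lemma for the interlacing inequality and spell out only the identification of $M_0(\zeta)$ as a principal submatrix of $M_\pm(\zeta)$, which is where the specific skein normal form \eqref{eq:SplusSminus} is used. The main obstacle is purely expository: making sure the "delete last row and column" bookkeeping is compatible with the matrix $M_S(\zeta)=(1-\zeta)S+(1-\bar\zeta)S^T$ and with the possible degeneracy of these forms (so that nullity, not just signature, is tracked correctly); once that is set up, the inequality is the classical interlacing estimate and no singularity-theoretic input is needed.
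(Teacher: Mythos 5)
Your proof is correct and is essentially the paper's own argument: the key step in both is the identification of $M_{S_0}(\zeta)$ as the codimension-one principal submatrix of $M_{S_\pm}(\zeta)$ (this is where the normal form \eqref{eq:SplusSminus} is used), after which the inequality is the classical one-row bordering/interlacing estimate of \cite[Lemma~12.3.4]{Kaw-book}, which is exactly the source the paper follows. The bookkeeping you gesture at for the combined bound $|\sigma_{L_\pm}(\zeta)-\sigma_{L_0}(\zeta)|+|n_{L_\pm}(\zeta)-n_{L_0}(\zeta)|\le 1$ is settled just as in the paper: the maximal positive-definite, negative-definite and null dimensions each can grow by at most one under the bordering, while their total grows by exactly one, so exactly one of the three jumps and the sum of the two increments is always $1$.
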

\begin{proof}
We follow the proof of \cite{Kaw-book}.
For fixed $\zeta$, let $M_+$, $M_-$ and $M_0$ denote the forms \eqref{eq:sigform} for $S_+$, $S_-$ and $S_0$, respectively.
Let $p_+,p_-,p_0$, $q_+,q_-,q_0$ and $n_+,n_-,n_0$ be the maximal dimension of subspaces on which $M_+,M_-,M_0$ are, respectively,
positive definite, negative definite and zero.
As $S_0$ is submatrix of $S_+$, $M_0$ is a restriction of $M_+$ onto an $n-$dimensional subspace. It follows that
\begin{align*}
p_0\le p_+\le p_++1\\
q_0\le q_+\le q_++1\\
n_0\le n_+\le n_++1.
\end{align*}
As $p_0+q_0+n_0+1=p_++q_++n_+$, the statement follows.
\end{proof}
The above proposition and (\ref{p:signatures}) give restriction for possible 
H--numbers of $L_\pm$ and $L_0$, when
the primitive  numbers of one of them are known.

\smallskip
\subsection{Semicontinuity of the extended spectrum}\label{ss:semies}
The inequality of (\ref{p:sigskein})
can be used to prove a variant  of the semicontinuity of spectra.

Here some comments are in order. The semicontinuity property of (genuine) spectrum of
hypersurface singularities says the following: if $\{f_t\,:\, t\in (\CC,0)\}$ is a family of isolated
singularities, then for any interval $I=[\alpha,\alpha+1)$ the spectral numbers $\{Spec(f_t)\}_t$ associated with
$f_t$ satisfies: $\#\,Spec(f_0)\cap I\geq \#\, Spec(f_{t\not=0})\cap I$, see \cite{St,Var}.
The semicontinuity principle is codified in  the very geometric substance of the deformation.
In particular, several  invariants behave semicontinuously, e.g.
for the Milnor number $\mu(f_0)\geq \mu(f_{t\not=0})$. If one tries to study this
phenomenon in the case of arbitrary links, one needs to assume that the geometric situation mimics
in the right way the presence of the deformation. In the next proposition we will assume that
$\deg\Delta$ (i.e. the Milnor number in algebraic case) is monotone.

\begin{proposition}\label{prop:semicont}
Let $L_1$ and $L_2$ are two links.
Let $ESp_1$ and $ESp_2$ be the corresponding extended spectra.
Let $\Delta_{L_i}$ ($i=1,2$) be the  characteristic polynomial of the monodromy operator
associated with the non--degenerate part of the Seifert form; in other words, the
first non--zero higher order Alexander polynomial. (If the Seifert forms are non--degenerate then
 $\Delta_{L_i}$ is just the Alexander polynomial. See subsections (\ref{ss:ALEX}) and (\ref{sub:higherAlex}).)

Assume also that $x\in(0,1)$ is such that
$\partial H_x\cap (ESp_1\cup ESp_2)=\emptyset$ (where $H_x=(x,x+1)\times i\mathbb{R}$ as above).
Moreover assume that one of the following holds.
\begin{itemize}
\item[(a)] $L_1$ arises from $L_2$ by changing a negative (or left-handed) crossing to positive (or right-handed)
crossing (see \cite[Exercise 3.2.5]{Liv} for the necessary definitions)
and $\deg\Delta_{L_1}\ge\deg\Delta_{L_2}$;
\item[(b)] $L_1$ arises from $L_2$ by changing one crossing and  $\deg\Delta_{L_1}>\deg\Delta_{L_2}$;
\item[(c)] $L_1$ arises from $L_2$ by a hyperbolic transformation (i.e. $L_1$ and $L_2$ can play a role
of $L_0$ and $L_\infty$ at some diagram, see \cite[Definition~12.3.3]{Kaw-book}) and $\deg\Delta_{L_1}>\deg\Delta_{L_2}$.
\end{itemize}
Then
\[\#ESp_1\cap H_x\ge \#ESp_2\cap H_x.\]
\end{proposition}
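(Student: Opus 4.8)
The plan is to reduce everything to the signature characterization of the extended spectrum (Corollary~\ref{c:sig}(b)) combined with the skein inequality (Proposition~\ref{p:sigskein}) and the monotonicity hypothesis on $\deg\Delta$. First I would record the two key translations. By Corollary~\ref{c:sig}(b), for a link $L$ with extended spectrum $ESp$ and $\zeta=e^{2\pi i x}$ with $ESp\cap\partial H_x=\emptyset$, one has $\sigma(\zeta)=-\#ESp\cap H_x+\#ESp\setminus H_x$; since $\#ESp=\deg\Delta$ by Lemma~\ref{l:alex0} (with $\#ESp\setminus H_x=\deg\Delta-\#ESp\cap H_x$), this rearranges to
\[
\#ESp\cap H_x=\frac{\deg\Delta-\sigma(\zeta)}{2}.
\]
So the desired inequality $\#ESp_1\cap H_x\ge\#ESp_2\cap H_x$ is \emph{equivalent} to
\[
\deg\Delta_{L_1}-\sigma_{L_1}(\zeta)\ \ge\ \deg\Delta_{L_2}-\sigma_{L_2}(\zeta).
\]
Thus the whole proposition becomes an inequality purely about $\deg\Delta$ and Tristram--Levine signatures, which is exactly the kind of statement the skein machinery handles.

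Next I would handle the three cases (a), (b), (c) by relating $L_1,L_2$ to a skein triple. In cases (a) and (b), $L_1$ and $L_2$ are $L_+$ and $L_-$ (in some order) of a skein triple $(L_0,L_+,L_-)$; in case (c), $L_1,L_2$ play the roles of $L_0,L_\infty$ and one uses the hyperbolic-transformation version of the skein relations from \cite[Definition~12.3.3, Lemma~12.3.4]{Kaw-book}. In each case Proposition~\ref{p:sigskein} gives $|\sigma_{L_1}(\zeta)-\sigma_{L_2}(\zeta)|\le |\sigma_{L_1}(\zeta)-\sigma_{L_0}(\zeta)|+|\sigma_{L_2}(\zeta)-\sigma_{L_0}(\zeta)|\le 2$, and in fact the sharper bookkeeping with nullities shows that when $\deg\Delta$ jumps the signature cannot jump by the full amount. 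Concretely, combining the two instances of Proposition~\ref{p:sigskein} (one for the pair $(L_1,L_0)$, one for $(L_0,L_2)$) with the genus/Seifert-surface relation between $\deg\Delta$ of $L_\pm$ and of $L_0$ — namely $\deg\Delta_{L_0}$ and $\deg\Delta_{L_\pm}$ differ by at most the corresponding nullities, using $S_0$ being an $n\times n$ submatrix of the $(n+1)\times(n+1)$ matrix $S_\pm$ in \eqref{eq:SplusSminus} — yields $\big(\deg\Delta_{L_1}-\sigma_{L_1}(\zeta)\big)-\big(\deg\Delta_{L_2}-\sigma_{L_2}(\zeta)\big)\ge -2\,\delta$, where $\delta$ measures the failure of $\deg\Delta$ to be monotone. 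Under hypothesis (a), (b) or (c) this failure is non-positive (in (a) the one-directional crossing change already forces the signature to move in the correct direction together with the nullity; in (b) and (c) the strict inequality $\deg\Delta_{L_1}>\deg\Delta_{L_2}$ absorbs the slack), so the right-hand side is $\ge 0$, which is what we want.

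The main obstacle I expect is the careful sign/parity accounting in case (a): there the hypothesis is only the \emph{non-strict} inequality $\deg\Delta_{L_1}\ge\deg\Delta_{L_2}$, so one cannot afford to lose a factor of $2$, and one must use the directional nature of a negative-to-positive crossing change. The standard fact here (see \cite{Mur}) is that for such a one-way crossing change the signature $\sigma(\zeta)$ can only decrease or stay put, and the jump in $\deg\Delta$ (equivalently the nullity) is coupled to it, so that $\deg\Delta-\sigma(\zeta)$ is monotone along the crossing change — this is precisely the ``intrinsic monotonicity'' alluded to in the introduction. I would isolate this as the crux: verify that for $L_1$ obtained from $L_2$ by a negative-to-positive crossing change one has $\sigma_{L_1}(\zeta)-\sigma_{L_2}(\zeta)\le\deg\Delta_{L_1}-\deg\Delta_{L_2}$, by running the two skein inequalities of Proposition~\ref{p:sigskein} against the submatrix relation and tracking which of $p_0\le p_+$ versus $q_0\le q_+$ is tight. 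Cases (b) and (c) then follow more easily because the strict degree inequality provides the extra unit of room needed to cover a generic (two-way) crossing change or hyperbolic transformation. Finally I would note that the hypothesis $\partial H_x\cap(ESp_1\cup ESp_2)=\emptyset$ is exactly what is needed to invoke Corollary~\ref{c:sig}(b) for both links simultaneously, and that Lemma~\ref{l:ISp-nosig} is the reason the non-real part of the spectrum contributes symmetrically and hence does not spoil the counting.
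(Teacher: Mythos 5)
Your strategy coincides with the paper's: both reduce the statement, via Corollary~\ref{c:sig}(b) together with Lemma~\ref{l:alex0}, to the inequality $\deg\Delta_{L_1}-\sigma_{L_1}(e^{2\pi i x})\ge \deg\Delta_{L_2}-\sigma_{L_2}(e^{2\pi i x})$, and then feed in Proposition~\ref{p:sigskein} and the degree hypotheses, with case (a) resting on the directional behaviour of the signature under a negative-to-positive crossing change. However, two places in your case analysis do not close as written. In case (b), ``one extra unit of room'' is not enough: passing through $L_0$, Proposition~\ref{p:sigskein} only gives $|\sigma_{L_1}(\zeta)-\sigma_{L_2}(\zeta)|\le 2$, while the strict hypothesis only guarantees $\deg\Delta_{L_1}-\deg\Delta_{L_2}\ge 1$, so your estimate yields $\#ESp_1\cap H_x-\#ESp_2\cap H_x\ge -\frac12$ and nothing more. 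You must add either the paper's parity remark (a crossing change preserves the number of components, so the two degrees have the same parity and hence differ by at least $2$) or the observation that $\#ESp_1\cap H_x-\#ESp_2\cap H_x$ is an integer, so a lower bound of $-\frac12$ already upgrades to $\ge 0$.

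Second, your intermediate claims in case (a) are false as stated. The inequality ``$\ge -2\delta$'' fails already for $L_2$ the unknot and $L_1$ the left-handed trefoil at $\zeta=-1$ (a single crossing change with $\delta=-2$, yet the difference of the two quantities is $0$, not $\ge 4$); and $\deg\Delta-\sigma(\zeta)$ is \emph{not} monotone under an arbitrary negative-to-positive crossing change: the figure-eight knot, amphichiral of unknotting number one, can be unknotted by turning a negative crossing positive, and then $\deg\Delta-\sigma(\zeta)$ drops from $2$ to $0$. So in case (a) you must use the two ingredients separately and simultaneously, exactly as the paper does: the hypothesis $\deg\Delta_{L_1}\ge\deg\Delta_{L_2}$ together with the one-sided inequality $\sigma_{L_1}(\zeta)\le\sigma_{L_2}(\zeta)$; neither alone, nor a claimed monotonicity of their difference, suffices. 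Note also that this one-sided inequality cannot be obtained by ``running Proposition~\ref{p:sigskein} against the submatrix relation'', since that proposition is symmetric in $L_+$ and $L_-$; it comes instead from the fact that $M_{S_+}(\zeta)-M_{S_-}(\zeta)$ is a rank-one form with the single corner entry $2-2\re\zeta>0$ from \eqref{eq:SplusSminus}, hence semidefinite, so the signature moves by at most $2$ and only in one fixed direction --- a short separate argument (or the standard citation, as the paper implicitly uses) is required here.
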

\begin{proof}
Let $a_1=\#ESp_1\cap H_x$, $a_2=\#ESp_2\cap H_x$, $b_1=\#ESp_1\setminus H_x$, $b_2=\#ESp_2\setminus H_x$.
Then by Corollary~\ref{c:sig}(b) we have
\begin{align*}
a_1+b_1&=\deg\Delta_{L_1}&a_2+b_2&=\deg\Delta_{L_2}\\
-a_1+b_1&=\sigma_{L_1}(e^{2\pi i x})&-a_2+b_2&=\sigma_{L_2}(e^{2\pi i x}).
\end{align*}
Thus
\[a_1-a_2=\frac12\left(\deg\Delta_{L_1}-\deg\Delta_{L_2}-\sigma_{L_1}(e^{2\pi ix})+\sigma_{L_2}(e^{2\pi i x})\right).\]
Now, in the case (a), as $\deg\Delta_{L_1}>\deg\Delta_{L_2}$ and $L_1$ and $L_2$ have the same number of components,
the degrees differ at least by $2$. The signatures cannot differ by more than $2$ by Proposition~\ref{p:sigskein}.
In the case (b), signature of $L_1$ is not larger than that of $L_2$, in case (c), both degree of Alexander and
signature cannot differ by more than one.
\end{proof}
The above results is enough to prove a variant of Theorem~6.7 from \cite{Bo} with $ESp\cap H_x$
instead of Tristram--Levine signatures. Since for algebraic links the $ESp$ is the same as the ordinary spectrum,
we can relate the spectra of singularities of a plane curve with the spectrum of the singularity at infinity. See
\cite{BN} for details.

\begin{remark}
In singularity theory the signature is not semicontinuous, see e.g. \cite{KN}. Hence we do not have
the semicontinuity property for each particular Hodge number either.
Similar behavior can be observed in the knot theory: the condition
$\deg\Delta_{L_1}>\deg\Delta_{L_2}$ (in the notation from Proposition~\ref{prop:semicont}) alone is not sufficient to determine
the sign of $\sigma_{L_1}(\zeta)-\sigma_{L_2}(\zeta)$, so we do not have a strong 'semicontinuity property' for signatures of knots.
We cannot also expect the semicontinuity property for each H--number of links.
\end{remark}

\subsection{Higher Alexander polynomials and rational Nakanishi index}
Next, we wish to connect the higher order Alexander polynomials
associated with $S_\pm$ and $S_0$, cf. (\ref{sec:skein1}).
In order to formulate the result,
we need to introduce some additional notation.

\begin{convention} \
\begin{itemize}
\item[(a)] Let us fix $\lambda\in\mathbb{C}$ for this section. For any matrix $H$ with coefficients
in $\mathbb{C}[t]$ we define
\[d(H)=\ord_{t=\lambda}\det H.\]
\item[(b)] For any $m\times m$ matrix $K$ and $1\le i,j\le m$, we define $K^{i,j}$ as the
 $(m-1)\times (m-1)$ minor
resulting from $K$ by removal of $i-$th row and $j-$th column.
\end{itemize}
\end{convention}
The next easy  lemma  will be important in the sequel.
\begin{lemma}\label{l:laplacetr}
For any matrix $H$ of size $m\times m$ and for any $1\le j\le m$ one has
\[d(H)\ge \min\{d(H^{i,j})\colon 1\le i\le m\}.\]
\end{lemma}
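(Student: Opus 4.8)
The plan is to expand $\det H$ by cofactors along the $j$-th column. Writing $H_{ij}\in\mathbb{C}[t]$ for the $(i,j)$-entry of $H$, the Laplace expansion reads
\[\det H=\sum_{i=1}^{m}(-1)^{i+j}H_{ij}\,\det H^{i,j}.\]
The basic tool is then the elementary fact that $\ord_{t=\lambda}$ is a valuation: the order of vanishing of a sum at $t=\lambda$ is at least the minimum of the orders of vanishing of the summands, and the order of vanishing of a product is the sum of the orders. Applying this to the expansion above gives
\[d(H)=\ord_{t=\lambda}\det H\ \ge\ \min_{1\le i\le m}\left(\ord_{t=\lambda}H_{ij}+\ord_{t=\lambda}\det H^{i,j}\right).\]

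Next I would use that every entry $H_{ij}$ is a genuine polynomial in $t$, so it has no pole at $t=\lambda$ and $\ord_{t=\lambda}H_{ij}\ge 0$. Hence each term on the right-hand side is bounded below by $\ord_{t=\lambda}\det H^{i,j}=d(H^{i,j})$, and therefore $d(H)\ge\min_{1\le i\le m} d(H^{i,j})$, which is exactly the assertion.

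The only point requiring a little care is the treatment of degenerate cases: if some $H_{ij}$, or some $\det H^{i,j}$, or $\det H$ itself is the zero polynomial, one must adopt (as in the remark following Lemma~\ref{l:sum}) the convention that $\ord_{t=\lambda}$ of the zero polynomial is $+\infty$. With this convention the cofactor identity and the valuation inequalities above remain meaningful, and the statement is vacuously true when $\det H\equiv 0$. I do not expect any substantial obstacle here: the lemma is essentially the ultrametric triangle inequality for order-of-vanishing applied to a single column expansion of the determinant, the extra input being only that the entries of $H$, lying in $\mathbb{C}[t]$, contribute nonnegative orders.
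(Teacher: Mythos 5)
Your proof is correct and follows exactly the paper's route: a Laplace (cofactor) expansion of $\det H$ along the $j$-th column, followed by the observation that each summand $H_{ij}\det H^{i,j}$ vanishes at $t=\lambda$ to order at least $d(H^{i,j})$ since the entries are polynomials. The extra care you take with the $+\infty$ convention for zero polynomials is a harmless refinement of the same argument.
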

\begin{proof}$\det H=\sum_i(-1)^{i+j}H_{i,j}\det H^{i,j}$ by
the Laplace expansion of  $\det H$ along the $j-$th column.
(Recall that $H_{i,j}$ denotes the element of $H$ at $i-$th row and $j-$th column.) It follows that,
if all $\det H^{i,j}$ are divisible by $(t-\lambda)^d$, so will be $\det H$.
\end{proof}
The definition of $d(H)$ is motivated by the higher order Alexander polynomials.
Namely, if $S$ is an $m\times m$ Seifert matrix and $H=S-tS^T$, then the multiplicity of a root of the $k-$th Alexander
polynomial (see Definition~\ref{d:alek}) of $H$ at $t=\lambda$ can be expressed as the minimum of $d(K)$,
where $K$ runs through all  $(m-k)\times (m-k)$ minors of $H$:
\begin{equation}\label{eq:d(K)}
\ord_{t=\lambda}\Delta_k^H=\min\{d(K)\colon\text{$K$ is an $(m-k)\times (m-k)$ minor of $H$}\}.
\end{equation}

For Seifert matrices $S_+$, $S_-$ and $S_0$ of links $L_+$, $L_-$ and $L_0$ (notation from Section~\ref{sec:skein1})
let us define $H_*=S_*-tS^T_*$ ('$*$' is one of '$+$', '$-$' or '$0$') and
\[d_k^{*}=\ord_{t=\lambda}\Delta_k^{H_*}.\]
An immediate consequence of Lemma~\ref{l:laplacetr} is that for any $k$ and $*\in\{+,-,0\}$:
\begin{equation}\label{eq:dless}
d_{k}^{*}\ge d_{k+1}^{*}
\end{equation}

The skein relation gives the following restrictions for values of $d_k^*$.
\begin{proposition}\label{prop:skeindk}
The integers $d_k^+$, $d_k^-$ and $d_k^0$ satisfy the following relations:
\begin{subequations}
\begin{align}
d_{k}^0&\ge d_{k+1}^{\pm}\label{eq:simple}\\
d_k^\pm&\ge d_{k+1}^0\label{eq:secondone}\\
d_k^{\pm}&\ge d_{k+1}^{\mp}&\text{if $\lambda\neq 1$}\label{eq:thirdone}\\
d_k^{\pm}&\ge \min(d_{k+1}^{\mp}+1,d_{k}^{\mp})&\text{if $\lambda=1$}.\label{eq:fourthone}
\end{align}
\end{subequations}
\end{proposition}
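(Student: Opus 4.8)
The plan is to reduce everything to the description \eqref{eq:d(K)} of $d_k^*$ as the minimum of $d(K):=\ord_{t=\lambda}\det K$ over all $(m-k)\times(m-k)$ minors $K$ of $H_*$ (with $m=n+1$ for $*=\pm$ and $m=n$ for $*=0$), combined with the Laplace bound of Lemma~\ref{l:laplacetr} and the monotonicity \eqref{eq:dless}. The only structural facts I will use are that $H_+$ and $H_-$ agree in every entry except the $(n+1,n+1)$ one, where $H_+-H_-$ equals $1-t$, and that $H_0$ is the submatrix of either $H_\pm$ on the first $n$ rows and columns. With this, \eqref{eq:simple} is immediate: every $(n-k)\times(n-k)$ minor of $H_0$ is also such a minor of $H_\pm$, so the minimum of $d(\cdot)$ over the smaller family, which by \eqref{eq:d(K)} is $d_k^0$, is at least the minimum over the larger family, which is $d_{k+1}^\pm$.

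For \eqref{eq:secondone}, fix an $s\times s$ minor $K$ of $H_\pm$ with $s=n+1-k$. If $K$ uses the last row, expand $\det K$ along it and invoke Lemma~\ref{l:laplacetr}: $d(K)$ is at least the minimum of $d(\cdot)$ over the resulting $(s-1)\times(s-1)$ minors, none of which uses the last row. Do the same with the last column. After at most two such steps $K$ is replaced by a minor $K'$ of $H_0$ of size at least $s-2=n-k-1$, and by \eqref{eq:d(K)} and \eqref{eq:dless} one has $d(K')\ge d_{k+1}^0$; hence $d(K)\ge d_{k+1}^0$, and taking the minimum over all $K$ gives $d_k^\pm\ge d_{k+1}^0$.

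For \eqref{eq:thirdone} and \eqref{eq:fourthone}, fix an $s\times s$ minor $K$ of $H_+$ with $s=n+1-k$ (the $H_-$ case is symmetric, since $H_--H_+$ also has order-$\,0$-or-$1$ entry at $(n+1,n+1)$). If $K$ does not use both the last row and the last column, then $K$ is literally an $s\times s$ minor of $H_-$, so $d(K)\ge d_k^-\ge d_{k+1}^-$. If $K$ uses both, write $K=K^-+(1-t)E$ where $K^-$ is the corresponding minor of $H_-$ and $E$ has a single $1$ in its bottom-right corner; since the determinant is affine-linear in that entry and removing the last row and column of $K^-$ kills the differing entry, $\det K=\det K^-+(1-t)\det (K^-)^{\mathrm{last},\mathrm{last}}$, where $(K^-)^{\mathrm{last},\mathrm{last}}$ is an $(s-1)\times(s-1)$ minor of $H_-$. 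Hence $d(K)\ge\min\bigl(d_k^-,\ \ord_{t=\lambda}(1-t)+d_{k+1}^-\bigr)$. When $\lambda\neq1$ the term $\ord_{t=\lambda}(1-t)$ is $0$, and since $d_k^-\ge d_{k+1}^-$ this forces $d(K)\ge d_{k+1}^-$, giving \eqref{eq:thirdone}; when $\lambda=1$ it is $1$, giving $d(K)\ge\min(d_k^-,d_{k+1}^-+1)$, i.e. \eqref{eq:fourthone}. In both cases one concludes by minimizing over $K$.

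The one place demanding genuine care is \eqref{eq:secondone}: one must keep track of whether the size of the minor drops by $0$, $1$, or $2$ according to whether the last row and/or column actually occur in $K$, and check that \eqref{eq:dless} still delivers the lower bound $d_{k+1}^0$ in each case — and, as usual, that the indices involved stay within the range where the $\Delta_k$'s are defined. Everything else is just repeated Laplace expansion together with the linearity of the determinant in the single entry where $H_+$ and $H_-$ differ.
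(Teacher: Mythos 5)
Your proposal is correct and takes essentially the same route as the paper: \eqref{eq:simple} by inclusion of minors, \eqref{eq:secondone} by at most two Laplace expansions along the last row/column (the paper's cases $A$, $B$, $C$) combined with \eqref{eq:dless}, and \eqref{eq:thirdone}--\eqref{eq:fourthone} by linearity of the determinant in the single corner entry where $H_+$ and $H_-$ differ. Your only (harmless) deviation is bounding the order of the corner cofactor directly by $d_{k+1}^-$ as a minor of $H_-$, where the paper bounds it by $d_k^0$ and then invokes the earlier inequalities.
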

\begin{proof}
Let  $m$ be the size of $H_0$.
$H_0$ can be regarded as an $m\times m$ minor of both $H_+$ and $H_-$ (cf. \eqref{eq:SplusSminus}).
Then any $(m-k)\times (m-k)$
minor of $H_0$ is also an $((m+1)-(k+1))\times ((m+1)-(k+1))$ minor of $H_+$ and of $H_-$.
It follows that $d_{k+1}^\pm\le d_k^0$, because in $d_{k+1}^\pm$ we take a minimum over larger set.
Equation~\eqref{eq:simple} follows.

As for \eqref{eq:secondone}, we can divide all possible $(m+1-k)\times (m+1-k)$ minors of $H_+$
in three categories.
\begin{itemize}
\item $A^k_\alpha$, $\alpha\in\mathcal{A}$, will denote minors lying entirely in $H_0$;
\item $B^k_\beta$, $\beta\in\mathcal{B}$, will denote minors containing a part of the last column or row of $H_+$
but not containing the corner;
\item $C^k_\gamma$, $\gamma\in\mathcal{C}$,
will denote minors containing the element of $H_+$ lying in $(m+1)$st row and $(m+1)$st column.
\end{itemize}

Graphically we can present these minors like that

\begin{pspicture}(-5,-3)(5,3)
\rput(0,0){
\psscalebox{2}{$%
\left[%
\begin{matrix}%
&&&&&&\\&&&&&&&\\&&&&&&\\&&&&&&\\&&&&&&\\&&&&&&%
\end{matrix}%
\right]%
$%
}}
\rput(-2,-2.7){$H_+$}
\rput(-0.25,0.25){
\psscalebox{1.65}{$%
\left[%
\begin{matrix}%
&&&&&&\\&&&&&&&\\&&&&&&\\&&&&&&\\&&&&&&\\&&&&&&%
\end{matrix}%
\right]%
$%
}}
\rput(-1.8,-1.6){$H_0$}
\psframe[fillcolor=lightgray,opacity=0.5,fillstyle=solid,linewidth=1pt, framearc=0.3](-2,0.5)(-0.5,2)
\rput(-1,0.2){$A$}
\psframe[fillcolor=lightgray,opacity=0.5,fillstyle=solid,linewidth=1pt, framearc=0.3](1.1,0)(2.6,1.5)
\rput(0.7,0){$B$}
\psframe[fillcolor=lightgray,opacity=0.5,fillstyle=solid,linewidth=1pt, framearc=0.3](1,-2.7)(2.5,-1.2)
\rput(1,-1){$C$}
\end{pspicture}

\smallskip

\noindent By \eqref{eq:d(K)} one has (where $d(\alpha)=d(A^k_\alpha)$, and similarly
for $d(\beta)$ and $d(\gamma)$):
\begin{equation}\label{eq:DDD}
d_{k}^+=\min(\min_{\alpha\in\mathcal{A}} d(\alpha),
\min_{\beta\in\mathcal{B}}d(\beta),\min_{\gamma\in\mathcal{C}}d(\gamma)).
\end{equation}
We need to show that all three minima are greater than $d_{k+1}^0$.

First of all $\min_{\alpha\in\mathcal{A}}d(\alpha)$ is
precisely $d_{k-1}^0$. 

Let $B_{\beta}^{j,m+1-k}$ be
the minor of $H_+$ resulting by removing the last column and $j-$th row from $B_\beta$ (assuming that $B_\beta$ contains
a part of $(m+1)$st column of $H_+$ as on the picture, not a part of $(m+1)$st row). Remark that $B_{\beta}^{j,m+1-k}$ is
an $(m-k)\times (m-k)$ minor of $H_0$.
By Lemma~\ref{l:laplacetr} we have
\[d(B_\beta)\ge\min_{j=1,\dots,m+1-k} d(B_{\beta}^{j,m+1-k}).\]
Thus
\[\min_{\beta\in\mathcal{B}}d(B_\beta)\ge d_k^0.\]
In order to deal with $d(C_\gamma)$ we need to do expand $\det C_\gamma$ first along the last row and
then along the last column.
Applying Lemma~\ref{l:laplacetr} twice we get
\[d(C_\gamma)\ge\min_{1\le j,j'\le m+1-k} d(C_{\gamma}^{(j,m+1-k),(j',m+1-k)}),\]
where $C_{\gamma}^{(j,m+1-k),(j',m+1-k)}$ arises from $C_\gamma$ by deleting $j-$th and $(m+1-k)-$th column and $j-$th and
$(m+1-k)-$th row. It is thus an $(m-(k+1))\times (m-(k+1))$ minor of $H_+$ and also of $H_0$, because it does not contain
neither the last column, nor the last row of $H_+$.
Hence,
$d(C_\gamma)\ge d_{k+1}^0$.
Finally,
using \eqref{eq:d(K)} and (\ref{eq:DDD}) we obtain
$d_k^+\ge
\min(d^0_{k-1},d^0_k,d^0_{k+1})=d^0_{k+1}$.
Hence the inequality \eqref{eq:secondone} is proved.

In order to prove the last two inequalities, let us consider two $(m+1-k)\times(m+1-k)$ minors
$K_+$ and $K_-$ of $H_+$ and $H_-$
obtained by removing the same columns and the same rows from matrices $H_+$ and $H_-$.

As $H_+-H_-$ is a matrix with $(t-1)$ in the place $(m+1,m+1)$, and zeros everywhere else,
$K_+=K_-$ unless they contain the element at the
bottom right corner of $H_+$ and $H_-$. If they do not contain,
\[\det K_+=\det K_-,\text{ so } d(K_+)=d(K_-)\]
If they do,
\[\det K_+=\det K_-+(t-1)\det K_0,\]
where $K_0=K_\pm^{m+1-k,m+1-k}$ arises from $K_\pm$
by removing the last column and the last row. In this case we deduce that
\[d(K_+)\ge \min(d(K_-),d(K_0)+s),\]
where $s=1$ if $\lambda=1$ and $0$ otherwise.

This shows in particular that
\[d_k^+\ge \min (d_k^-,s+d_k^0).\]
Now it is enough to observe that by \eqref{eq:secondone} $d_k^0\ge d_{k-1}^-$.
\end{proof}

In order to apply this skein relation, let us fix $\lambda$ with $0<|\lambda|\le 1$
and
consider the set $\Theta$ (defined after the proof of Lemma~\ref{l:sum}) associated with $\lambda$.
For any $N\geq 1$ set
\begin{equation}\label{eq:PN}
P_N=\#\{\theta\in\Theta:\theta\ge N\}.\end{equation}
$P_N$ can be interpreted as the number of the Jordan blocks of
size at least $N$ with eigenvalue $\lambda$ of the monodromy
matrix $h$; i.e., with the notation of \eqref{eq:sk}, one has:
\[P_N=\sum_{k\ge N} s_k(\lambda).\]

\begin{corollary}\label{cor:PN} Fix some $\lambda$.
Let $P_N^+$ and $P_N^-$ be the $P_N$ numbers associated to the links $L_+$ and $L_-$ respectively
as in (\ref{eq:PN}).
Then for any $N\geq 2$ one has
\[|P_N^+-P_N^-|\le 2N\]
while for $N=1$, $|P_1^+-P_1^-|\le 1$.
\end{corollary}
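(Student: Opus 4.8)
The plan is to convert the statement into a claim about the orders $d^{\pm}_k=\ord_{t=\lambda}\Delta^{H_{\pm}}_k$ (with $H_{\pm}=S_{\pm}-tS_{\pm}^T$) and then to push Proposition~\ref{prop:skeindk} through a counting argument. First I would set up a dictionary. Writing $m_0^{\pm}=\dim S_0(L_{\pm})$, the relation $\Delta^{S-tS^T}_{k+m_0}=\Delta^{h-tI}_{k}$ from Section~\ref{sub:higherAlex} together with Proposition~\ref{p:highalex} (applied to $h-tI$) shows that the finite part
\[d^{\pm}_{m_0^{\pm}}\ \ge\ d^{\pm}_{m_0^{\pm}+1}\ \ge\ \cdots\ \ge\ d^{\pm}_{n+1}=0\]
is a \emph{convex} sequence whose successive differences are exactly the sizes of the Jordan blocks of $h$ with eigenvalue $\lambda$, listed in decreasing order. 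Hence, for every $N\ge1$,
\[P_N^{\pm}=\#\{\,k\ :\ m_0^{\pm}\le k\le n,\ d^{\pm}_k-d^{\pm}_{k+1}\ge N\,\},\]
and by convexity these indices fill an initial segment of length $P_N^{\pm}$.

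Next I would invoke the interleaving provided by Proposition~\ref{prop:skeindk}. For $0<|\lambda|\le1$, $\lambda\neq1$, \eqref{eq:thirdone} gives $d^{\pm}_k\ge d^{\mp}_{k+1}$, hence — with \eqref{eq:dless} — $d^{\mp}_{k+1}\le d^{\pm}_k\le d^{\mp}_{k-1}$; for $\lambda=1$, \eqref{eq:fourthone} gives the same, up to an additive $1$. Two consequences: because the earliest finite term of one sequence forces the next term of the other to be finite, $|m_0^+-m_0^-|\le1$; and each decrement $d^{\pm}_k-d^{\pm}_{k+1}$ is bounded above by $d^{\mp}_{k-1}-d^{\mp}_{k+2}$, a sum of three consecutive decrements of the opposite sequence.

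The counting step then splits. For $N=1$, $P_1^{\pm}$ is the number of Jordan blocks, namely $\#\{k:d^{\pm}_k>0\}-m_0^{\pm}$, and the interleaving forces the two sequences to hit the value $0$ within one index of each other; once $m_0^+=m_0^-$ this already gives $|P_1^+-P_1^-|\le1$, and the remaining case is pinned down by the degeneracy data. For $\lambda=1$ one uses that $P_1(1)+\dim S_0$ equals the nullity of the intersection form $S-S^T$, which for a connected Seifert surface with $\mu$ boundary components equals $\mu-1$, a crossing–change invariant; thus $P_1^+(1)+m_0^+=P_1^-(1)+m_0^-$, whence $|P_1^+(1)-P_1^-(1)|=|m_0^+-m_0^-|\le1$. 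For $\lambda\in S^1\setminus\{1\}$ one uses Lemma~\ref{l:null}, $n_L(\bar\lambda)=P_1(\lambda)+\dim S_0$, together with the fact that the nullity of $M_S(\zeta)$ changes by at most $1$ under a crossing change (because $M_{S_+}(\zeta)-M_{S_-}(\zeta)$ is a rank–one positive semidefinite perturbation), and for $0<|\lambda|<1$ one runs the interleaving of the previous paragraph directly. For $N\ge2$, feeding the estimate ``one decrement $\le$ a sum of three consecutive decrements of the other sequence'' into the count of large decrements over the two initial segments (of length $P_N^{\pm}$, up to a bounded shift), and using that decrements are non-increasing, yields $|P_N^+-P_N^-|\le2N$.

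I expect the delicate point to be the counting step for $N\ge2$: obtaining the clean constant $2N$ from the interleaving, and in particular keeping honest track of the possible mismatch $m_0^+\neq m_0^-$, which genuinely occurs (e.g.\ the crossing change relating the Hopf link to the two–component unlink changes $\dim S_0$ by one). A lossy bound of the right order of magnitude is immediate; extracting the stated constant is where the real work lies.
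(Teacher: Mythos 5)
Your setup (decrements of $d_k^{\pm}$ at $t=\lambda$ are the Jordan block sizes listed non--increasingly, so the decrements $\ge N$ form an initial segment of length $P_N^{\pm}$; interleaving $d^{\mp}_{k+1}\le d^{\pm}_k$ from \eqref{eq:thirdone} together with \eqref{eq:dless}) is exactly the paper's starting point. But the main case $N\ge 2$ is not actually proved: you assert that ``feeding the estimate one decrement $\le$ a sum of three consecutive decrements of the other sequence into the count of large decrements \dots\ yields $|P_N^+-P_N^-|\le 2N$,'' and then concede that extracting the constant $2N$ ``is where the real work lies.'' That concession is the gap. Moreover, the tool you propose is not obviously sufficient: combined with monotonicity of the decrements it only gives statements of the shape $\delta^{-}_{k-1}\ge N/3$ whenever $\delta^{+}_k\ge N$, i.e.\ bounds relating $P_N^+$ to $P^-_{\lceil N/3\rceil}$, which is not the claimed inequality between $P_N^+$ and $P_N^-$. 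The paper's proof is a short explicit chain you never write down: assuming $a=P_N^+-P_N^->0$, one has $d^+_{P_N^+-1}\ge d^-_{P_N^+}\ge d^-_{P_N^-}-a(N-1)\ge d^+_{P_N^-+1}-a(N-1)\ge d^+_{P_N^+-1}+(a-2)N-a(N-1)$, where the middle steps telescope decrements ($\le N-1$ past $P_N^-$ for $d^-$, $\ge N$ before $P_N^+$ for $d^+$) and the outer steps are \eqref{eq:thirdone}; this forces $a\le 2N$ directly, with no counting of blocks.

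Two smaller points. For $N=1$ your interleaving observation is essentially the paper's argument ($P_1^{\pm}$ is the first index where $d^{\pm}$ vanishes, and $d^+_{k-1}\ge d^-_k>0$ pins the two within one index of each other); the extra machinery about nullities, Lemma~\ref{l:null}, rank--one perturbations and $\mu-1$ for $\lambda=1$ is unnecessary and leans on facts not established in the paper (connectedness of the Seifert surfaces produced in the skein construction, invariance of the nullity of $S-S^T$ under real S--equivalence). Finally, the mismatch $m_0^+\neq m_0^-$ that you flag as a genuine difficulty is disposed of in the paper simply by the convention that the zero polynomial has infinite order of vanishing (Remark after Lemma~\ref{l:sum}): with $d_i=+\infty$ on the degenerate range, the chain of inequalities above goes through verbatim, so no separate control of $|m_0^+-m_0^-|$ is needed. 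As it stands, your proposal identifies the right ingredients but does not contain a proof of the inequality $|P_N^+-P_N^-|\le 2N$.
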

\begin{proof}
For $N>1$, assume that $P_N^+-P_N^-=a>0$. By Proposition~\ref{p:highalex}  for any $i<P_N^+$ we get
\begin{equation}\label{eq:PNdi}
d_i^+-d_{i+1}^+\ge N,
\end{equation}
and  for $i\ge P_N^-$
\begin{equation}\label{eq:PNdi2}
d_i^--d_{i+1}^-\le N-1.
\end{equation}
Therefore, we obtains the next sequence of inequalities:
\begin{multline*}
d^+_{P_N^+-1}\stackrel{(*)}{\ge} d^-_{P_N^+}\stackrel{(**)}{\ge} d^-_{P_N^-}-a(N-1)\stackrel{(*)}{\ge} \\
\ge d^+_{P_N^-+1}-a(N-1)\stackrel{(***)}{\ge}d^+_{P_N^+-1}+(a-2)N-a(N-1).
\end{multline*}
Here the inequalities denoted by ($*$) follow
from \eqref{eq:thirdone}, ($**$) from \eqref{eq:PNdi2} and ($***$) from \eqref{eq:PNdi}.
Hence $(a-2)N-a(N-1)\le 0$,  or $2N\geq  a$.

So now assume that $N=1$. Then $P_1^\pm=\min\{i>0\colon d_i^\pm=0\}$. So let us take $k$ such that
$d_k^->0=d_{k+1}^-$ (i.e. $P_1^-=k+1$).
By \eqref{eq:thirdone} we have $d_{k-1}^+\ge d_{k}^-$, hence $d_{k-1}^->0$, so $P_1^+\ge k$. The argument follows
from symmetry.
\end{proof}

For knots,
the maximum of the values $P_1$ for all $\lambda\neq 1$ is, by Proposition~\ref{prop:nak1}, equal to the Nakanishi
index $n_{\mathbb{Q}}$. Therefore, Corollary~\ref{cor:PN} implies that

\begin{corollary}
Let $K_+$ and $K_-$ be two knots differing by one change of crossing. Then
\[|n_{\mathbb{Q}}(K_+)-n_{\mathbb{Q}}(K_-)|\le 1.\]
\end{corollary}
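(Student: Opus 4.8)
The goal is to bound $|n_{\mathbb{Q}}(K_+)-n_{\mathbb{Q}}(K_-)|$ by $1$ when $K_+$ and $K_-$ differ by a single crossing change. The plan is to combine Proposition~\ref{prop:nak1}, which identifies $n_{\mathbb{Q}}(K)$ with $\max_\lambda r(\lambda)$ (equivalently, the maximal number of Jordan blocks of the monodromy $h$ sharing a common eigenvalue), with the skein estimate of Corollary~\ref{cor:PN} applied in the case $N=1$. The key observation is that $r(\lambda)$ is exactly $P_1(\lambda)$ in the notation of \eqref{eq:PN}, since $P_1(\lambda)=\sum_{k\ge 1}s_k(\lambda)$ counts all Jordan blocks with eigenvalue $\lambda$; therefore $n_{\mathbb{Q}}(K)=\max_\lambda P_1(\lambda)$, where the maximum ranges over all $\lambda$ with $0<|\lambda|\le 1$ (and by the symmetry $q^k_\lambda=q^k_{\bar\lambda}$ of Lemma~\ref{l:symSei}, eigenvalues off the unit circle are accounted for correctly).

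\textbf{Key steps.} First I would record that $K_+$ and $K_-$, being knots differing by a crossing change, have Seifert matrices of the form $S_\pm$ appearing in Section~\ref{sec:skein1}, so Corollary~\ref{cor:PN} applies with the same ambient matrix size; in particular for every $\lambda$ one has $|P_1^+(\lambda)-P_1^-(\lambda)|\le 1$. Second, fix a $\lambda_0$ achieving $n_{\mathbb{Q}}(K_+)=P_1^+(\lambda_0)$. Then
\[
n_{\mathbb{Q}}(K_+)=P_1^+(\lambda_0)\le P_1^-(\lambda_0)+1\le \max_\lambda P_1^-(\lambda)+1=n_{\mathbb{Q}}(K_-)+1.
\]
Third, run the symmetric argument with the roles of $K_+$ and $K_-$ exchanged to get $n_{\mathbb{Q}}(K_-)\le n_{\mathbb{Q}}(K_+)+1$. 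Combining the two inequalities gives $|n_{\mathbb{Q}}(K_+)-n_{\mathbb{Q}}(K_-)|\le 1$, which is the claim.

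\textbf{Main obstacle.} There is essentially no technical obstacle once Corollary~\ref{cor:PN} and Proposition~\ref{prop:nak1} are in hand; the only point requiring a moment's care is the identification $n_{\mathbb{Q}}(K)=\max_\lambda P_1(\lambda)$, i.e.\ checking that the Nakanishi-index formula $n_{\mathbb{Q}}=\max_\lambda r(\lambda)$ of \eqref{eq:nqdef} really matches the quantity $P_1$ of \eqref{eq:PN} for the same set of eigenvalues $\lambda$, including the bookkeeping for eigenvalues not on $S^1$ (where a single $q^k_\lambda$ block contributes simultaneously to $\lambda$ and $1/\bar\lambda$, so the maximum is unaffected). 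After that, the proof is a two-line application of the triangle-type inequality.

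\begin{proof}
By Proposition~\ref{prop:nak1}, $n_{\mathbb{Q}}(K)=\max_\lambda r(\lambda)$, where $r(\lambda)=\sum_{k\ge 1}s_k(\lambda)$ is the number of Jordan blocks of the monodromy with eigenvalue $\lambda$; in the notation of \eqref{eq:PN}, $r(\lambda)=P_1(\lambda)$. Hence $n_{\mathbb{Q}}(K)=\max_\lambda P_1(\lambda)$.

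Since $K_+$ and $K_-$ are knots differing by a single crossing change, their Seifert matrices can be taken as the $S_\pm$ of Section~\ref{sec:skein1}, so Corollary~\ref{cor:PN} applies: for every eigenvalue $\lambda$ one has $|P_1^+(\lambda)-P_1^-(\lambda)|\le 1$. Choose $\lambda_0$ with $n_{\mathbb{Q}}(K_+)=P_1^+(\lambda_0)$. Then
\[
n_{\mathbb{Q}}(K_+)=P_1^+(\lambda_0)\le P_1^-(\lambda_0)+1\le \max_\lambda P_1^-(\lambda)+1=n_{\mathbb{Q}}(K_-)+1.
\]
Exchanging the roles of $K_+$ and $K_-$ gives $n_{\mathbb{Q}}(K_-)\le n_{\mathbb{Q}}(K_+)+1$, and therefore $|n_{\mathbb{Q}}(K_+)-n_{\mathbb{Q}}(K_-)|\le 1$.
\end{proof}
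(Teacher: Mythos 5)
Your proof is correct and follows essentially the same route as the paper: identify $n_{\mathbb{Q}}$ with $\max_\lambda P_1(\lambda)$ via Proposition~\ref{prop:nak1} and then apply the $N=1$ case of Corollary~\ref{cor:PN}, taking the maximum over $\lambda$. The paper compresses this into a single sentence, while you spell out the (standard) max-versus-max inequality and the bookkeeping for eigenvalues off the unit circle, but the argument is the same.
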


In particular, we reprove a theorem of Nakanishi in a weaker version:
\begin{corollary}
The rational Nakanishi index of a knot $K$ is bounded from above by the unknotting number of $K$.
\end{corollary}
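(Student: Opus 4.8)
The plan is to deduce this from the previous corollary by a telescoping argument along a minimal unknotting sequence. Writing $u=u(K)$ for the unknotting number, I would fix a sequence of knots $K=K_0,K_1,\dots,K_u=O$, where $O$ is the unknot and each $K_{i+1}$ differs from $K_i$ by a single crossing change; such a sequence exists by the definition of $u(K)$. The previous corollary then applies to every consecutive pair and gives $|n_{\mathbb{Q}}(K_i)-n_{\mathbb{Q}}(K_{i+1})|\le 1$ for $i=0,\dots,u-1$.

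Next I would record the base case $n_{\mathbb{Q}}(O)=0$. The complement $S^3\setminus O$ is a solid torus, so its infinite cyclic cover is homotopy equivalent to a line times a disc; hence the Alexander module $H=H_1(S^3\setminus O;\Lambda)$ vanishes, and the empty ($0\times 0$) matrix is already a square presentation matrix for it (and likewise after tensoring with $\mathbb{Q}$). Therefore $n_{\mathbb{Q}}(O)=0$. Alternatively, since $\Delta_0(O)\equiv 1$, this is immediate from Proposition~\ref{prop:nak1}.

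Putting these together, the triangle inequality yields
\[
n_{\mathbb{Q}}(K)=n_{\mathbb{Q}}(K_0)\le n_{\mathbb{Q}}(K_u)+\sum_{i=0}^{u-1}\bigl|n_{\mathbb{Q}}(K_i)-n_{\mathbb{Q}}(K_{i+1})\bigr|\le 0+u=u(K),
\]
which is the claim. I do not expect a genuine obstacle here: all the substantive content has already been absorbed into the previous corollary (and, through it, into Proposition~\ref{prop:skeindk} and Corollary~\ref{cor:PN}), so what remains is purely formal bookkeeping. The only step that even merits a line of justification is the vanishing $n_{\mathbb{Q}}(O)=0$, treated above.
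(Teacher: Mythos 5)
Your argument is correct and is exactly the route the paper intends: the paper states this corollary immediately after the bound $|n_{\mathbb{Q}}(K_+)-n_{\mathbb{Q}}(K_-)|\le 1$ with the telescoping along an unknotting sequence left implicit, and your base case $n_{\mathbb{Q}}(O)=0$ (via $\Delta_0\equiv 1$ and Proposition~\ref{prop:nak1}) is the right, if routine, justification.
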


\end{document}